\setlist{itemsep=3pt}
\newtheorem{proposition}{Proposition}
\newtheorem{theorem}[proposition]{Theorem}
\newtheorem{corollary}[proposition]{Corollary}
\newtheorem{conjecture}[proposition]{Conjecture}
\theoremstyle{definition}
\newtheorem{definition}[proposition]{Definition}
\newtheorem{remark}[proposition]{Remark}
\newcommand{\HH}{\mathbb{H}}
\newcommand{\RR}{\mathbb{R}}
\newcommand{\ZZ}{\mathbb{Z}}
\newcommand{\bH}{\mathbf{H}}
\newcommand{\bR}{\mathbf{R}}
\newcommand{\bS}{\mathbf{S}}
\newcommand{\cC}{\mathcal C}
\newcommand{\cG}{\mathcal G}
\DeclareMathOperator{\tr}{tr}
\DeclareMathOperator{\area}{area}
\DeclareMathOperator{\Ric}{Ric}
\DeclareMathOperator{\diam}{diam}
\DeclareMathOperator{\vol}{vol}
\DeclareMathOperator{\sys}{sys}
\DeclareMathOperator{\biRic}{biRic}
\title{Minimal surfaces and comparison geometry}
\author{Otis Chodosh}
\address{Department of Mathematics, Stanford University, Building 380, Stanford, CA 94305, USA}
\email{ochodosh@stanford.edu}
\begin{document}

\maketitle

\section{Introduction.}
A basic idea in Riemannian geometry is to minimize the length of a curve to get a stable geodesic. By analyzing the second variation of length, it is possible to relate curvature to global properties. A natural generalization is to minimize the area of a submanifold.  We discuss comparison results that have been obtained via this method, as well as some recent progress on regularity issues. 

\section{Minimization of area.}

Foundational work in geometric measure theory by Almgren, De Giorgi, Federer, Fleming, and Simons in the 1960s gives the following existence and regularity for codimension-one minimizers:
\begin{theorem}[{cf.\ \cite{Simon:GMT}}]\label{theo:minimize}
Let $(M^{n+1},g)$ be a closed Riemannian manifold and $\sigma \in H_n(M;\ZZ)$ be a codimension-one homology class. If $n+1\leq 7$ then there's a smooth representative $\Sigma \in \sigma$ of least area.
\end{theorem}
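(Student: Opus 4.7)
The plan is to first establish existence of a mass-minimizing integral current representing $\sigma$ via the direct method of the calculus of variations, and then invoke the codimension-one regularity theory to upgrade the minimizer to a smooth embedded hypersurface. The dimension hypothesis $n+1\leq 7$ enters only at the regularity step.

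\textbf{Existence.} I would work in the space of integral $n$-currents on $M$ (equivalently, via the De Giorgi correspondence, sets of finite perimeter) representing the class $\sigma$. Setting $m := \inf\{\mathbf{M}(T) : [T] = \sigma\}$, which is finite since $\sigma$ admits polyhedral representatives, choose a minimizing sequence $T_k$. Each $T_k$ is a cycle, so $\mathbf{M}(T_k) + \mathbf{M}(\partial T_k) = \mathbf{M}(T_k)$ is uniformly bounded, and the Federer--Fleming compactness theorem for integral currents produces a flat-convergent subsequence $T_k \to T$ with $T$ integral and $[T] = \sigma$. By lower semicontinuity of mass under flat convergence, $\mathbf{M}(T) \leq m$, so $T$ is a minimizer.

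\textbf{Regularity and dimension reduction.} The current $T$ is a codimension-one mass minimizer, locally the reduced boundary of a Caccioppoli set $E$. By De Giorgi's $\varepsilon$-regularity theorem (density bounds plus an excess decay argument), the reduced boundary $\partial^* E$ is a smooth embedded minimal hypersurface in the interior; the task is to show that the singular set $\cS := \supp T \setminus \partial^* E$ is empty. Federer's dimension reduction argument, combined with monotonicity of mass density, shows that tangent cones to $T$ at points of $\cS$ are themselves mass-minimizing integral hypercones in $\RR^{n+1}$ with singularities only at the origin, and an inductive covering argument yields the Hausdorff dimension bound $\dim_{\cH} \cS \leq n - k$, where $k$ is the smallest integer such that a nonflat mass-minimizing hypercone exists in $\RR^{k+1}$.

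\textbf{Simons' theorem and conclusion.} The main obstacle, and the source of the dimension restriction, is Simons' theorem: every mass-minimizing integral hypercone in $\RR^{n+1}$ with $n \leq 6$ is a hyperplane. I would prove this by combining the stability inequality for minimal hypersurfaces with the Simons identity for $|A|^2$ on the link of the cone, producing a Hardy-type spectral inequality that holds precisely when $n \leq 6$; the Simons cone over $S^3 \times S^3$ demonstrates sharpness in dimension $n = 7$. Together with dimension reduction, Simons' theorem forces $\cS = \emptyset$ whenever $n+1 \leq 7$, so $\Sigma := \supp T$ is a smooth embedded minimal hypersurface of least area in $\sigma$.
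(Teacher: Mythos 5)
Your proposal is a correct sketch of the standard Federer--Fleming/De Giorgi/Federer/Simons argument, which is precisely the theory the paper is citing (it states this theorem as a black box from \cite{Simon:GMT} and does not reprove it). One small imprecision worth fixing: tangent cones to $T$ at singular points need not be singular only at the origin; rather, Federer's dimension reduction iterates blow-ups (at a singular point $q\neq 0$ of a tangent cone $C$, the tangent cone to $C$ at $q$ splits off a line) until one reaches a nonflat mass-minimizing hypercone in some $\RR^{j+1}$, $j\leq n$, singular only at the origin, and it is this that contradicts Simons' theorem when $n\leq 6$ and gives the $\dim_{\cH}\cS\leq n-7$ bound in general.
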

In general, $\Sigma$ could have a singular set of dimension $\leq n-7$.  

\section{First and second variation.} 
Consider now $\Sigma^n \subset (M^{n+1},g)$ of least area. As with curves, we consider the first and second derivatives of the area of a variation of $\Sigma$ through hypersurfaces $t\mapsto \Sigma_t$. Assume that $\Sigma$ is \emph{two-sided} (i.e.\ admits a unit-normal $\nu$) and the variation has initial speed $\varphi$. The \emph{first variation of area} gives
\begin{equation}\label{eq:first-var}
0 = \frac{d}{dt}\Big|_{t=0} \area_g(\Sigma_t) =  \int_{\Sigma} H \varphi 
\end{equation}
where $H$ is the scalar mean curvature of $\Sigma \subset (M,g)$. We recall that $H=\tr A$ is the trace of the second fundamental form $A(X,Y) = g(\nabla_X \nu,Y)$. Since \eqref{eq:first-var} holds for any $\varphi$, we find that $H=0$, i.e.\ $\Sigma$ is a \emph{minimal surface}. As with geodesics, we should also consider the second derivative. The \emph{second variation of area} yields
\[
0 \leq \frac{d^2}{dt^2}\Big|_{t=0} \area_g(\Sigma_t) =  \int_{\Sigma} |\nabla \varphi|^2 - (|A|^2 + \Ric(\nu,\nu)) \varphi^2. 
\]
Here, $|A|$ is the norm of the second fundamental form and $\Ric(\nu,\nu)$ is the ambient normal Ricci curvature.

\section{The Geroch conjecture}  
We call a two-sided $\Sigma^n \subset (M^{n+1},g)$ a \emph{stable minimal hypersurface} if
\begin{equation}\label{eq:stable-min}
  \int_{\Sigma}  (|A|^2 + \Ric(\nu,\nu)) \varphi^2 \leq \int_\Sigma |\nabla \varphi|^2 
\end{equation}
for any $\varphi \in C^\infty_c(\Sigma)$. For applications, we must know that certain curvature properties of $(M,g)$ will restrict the geometry/topology of $\Sigma$. The first such result was obtained in 1979 by Schoen and Yau who showed:
\begin{theorem}[{\cite{SY:3d-torus}}]\label{theo:psc-min-3d-sphere}
If $(M^3,g)$ has positive scalar curvature $R>0$ then each component of a closed two-sided stable minimal surface $\Sigma\subset (M,g)$ has genus zero. 
\end{theorem}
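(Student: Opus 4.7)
The plan is to exploit the special algebra available in dimension three: a surface in a $3$-manifold, together with the Gauss equation, lets one trade the stability integrand $|A|^2 + \Ric(\nu,\nu)$ for a combination involving the ambient scalar curvature $R$ and the intrinsic Gauss curvature $K_\Sigma$. One then inserts the constant test function and invokes Gauss--Bonnet.

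In more detail, I would argue component by component, so assume $\Sigma$ is connected; note that two-sidedness gives orientability. First I would write down the Gauss equation specialized to a surface in a $3$-manifold. Tracing it yields the standard identity
\[
2 K_\Sigma = R - 2\Ric(\nu,\nu) + H^2 - |A|^2,
\]
where $K_\Sigma$ is the intrinsic Gauss curvature. Since $\Sigma$ is minimal, $H=0$, and rearranging gives
\[
|A|^2 + \Ric(\nu,\nu) = \tfrac{1}{2} R - K_\Sigma + \tfrac{1}{2}|A|^2.
\]
Next, because $\Sigma$ is closed, the constant function $\varphi \equiv 1$ is an admissible test function in the stability inequality \eqref{eq:stable-min}, yielding
\[
\int_\Sigma \bigl(|A|^2 + \Ric(\nu,\nu)\bigr) \leq 0.
\]
Substituting the previous identity and rearranging gives
\[
\int_\Sigma K_\Sigma \;\geq\; \tfrac{1}{2}\int_\Sigma R + \tfrac{1}{2}\int_\Sigma |A|^2 \;>\; 0,
\]
where strict positivity uses $R>0$ and $|A|^2 \geq 0$.

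Finally I would apply Gauss--Bonnet to the closed orientable surface $\Sigma$: $\int_\Sigma K_\Sigma = 2\pi\chi(\Sigma) = 2\pi(2-2g)$. The inequality above forces $2-2g > 0$, hence $g=0$.

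The only real content is the Gauss-equation identity that lets $\Ric(\nu,\nu)+|A|^2$ be rewritten in terms of $R$, $K_\Sigma$, and a nonnegative $|A|^2$-term; everything else is the choice $\varphi\equiv 1$ and Gauss--Bonnet. So the main (minor) obstacle is just getting the sign conventions and the trace of the Gauss equation right; there is no serious analytic difficulty, which is characteristic of this special low-dimensional setting.
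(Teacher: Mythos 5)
Your proof is correct and follows exactly the same route as the paper: the doubly traced Gauss equation with $H=0$ to rewrite $|A|^2+\Ric(\nu,\nu)$, the test function $\varphi\equiv 1$ in the stability inequality, and Gauss--Bonnet to conclude $\chi(\Sigma)>0$. The only cosmetic difference is that you rearrange after substituting $\varphi=1$ rather than before, which does not change the argument.
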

\begin{proof}
The Gauss equations relate the curvature of $g$ to the curvature of the induced metric on $\Sigma$. In particular, using $H=0$ and the (doubly traced) Gauss equations we obtain 
\begin{equation}\label{eq:traced-gauss-min-surf}
\Ric(\nu,\nu) + |A|^2 = \frac 12 (R + |A|^2 - 2K)
\end{equation}
where $R$ is the ambient scalar curvature and $K$ is the intrinsic Gaussian curvature. Using this in \eqref{eq:stable-min} we have
\begin{equation}\label{eq:2nd-var-rearrange-surf}
 \frac 12 \int_\Sigma(R+|A|^2) \varphi^2 \leq \int_\Sigma |\nabla \varphi|^2 + K \varphi^2.
\end{equation}
Taking $\varphi=1$ on $\Sigma$ in \eqref{eq:2nd-var-rearrange-surf}, the Gauss--Bonnet theorem and $R+|A|^2>0$ yields $\chi(\Sigma) > 0$, proving the assertion.
\end{proof}

As a consequence, Schoen and Yau were able to resolve the \emph{Geroch conjecture}:
\begin{theorem}[{\cite{SY:3d-torus}}]\label{thm:geroch}
The $3$-torus does not admit a metric of positive scalar curvature. 
\end{theorem}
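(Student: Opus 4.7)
The plan is to argue by contradiction, combining the existence result of Theorem \ref{theo:minimize} with the genus restriction of Theorem \ref{theo:psc-min-3d-sphere}, and then exploiting the fact that $T^3$ is aspherical.

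Suppose, toward a contradiction, that $g$ is a smooth Riemannian metric on $T^3$ with scalar curvature $R>0$. The first step is to pick a nonzero codimension-one homology class: for instance, writing $T^3 = S^1 \times S^1 \times S^1$, let $\sigma \in H_2(T^3;\ZZ)$ be the class Poincar\'e dual to $dx^1$, so that $\sigma$ is represented (for the flat metric) by a standard $\{x^1 = \text{const}\}$ 2-torus. Since $\dim M = 3 \leq 7$, Theorem \ref{theo:minimize} supplies a smooth area-minimizing representative $\Sigma \in \sigma$.

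Next, I would check that the Schoen--Yau hypotheses apply to $\Sigma$. Because $T^3$ is orientable and $\Sigma$ is a smooth oriented codimension-one submanifold representing an integral class, $\Sigma$ is two-sided (it admits a global unit normal $\nu$). As $\Sigma$ minimizes area in its homology class, any compactly supported normal variation is area-nondecreasing at second order, so $\Sigma$ is a stable minimal hypersurface in the sense of \eqref{eq:stable-min}. Theorem \ref{theo:psc-min-3d-sphere} then forces every component of $\Sigma$ to be a 2-sphere.

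Finally, I would derive the contradiction by topology. The universal cover of $T^3$ is $\RR^3$, so $T^3$ is aspherical and in particular $\pi_2(T^3) = 0$. By the Hurewicz theorem applied to the universal cover, any map $S^2 \to T^3$ lifts to $\RR^3$ and is therefore null-homotopic, hence null-homologous. Thus every component of $\Sigma$ represents $0 \in H_2(T^3;\ZZ)$, so $[\Sigma] = 0$, contradicting $[\Sigma] = \sigma \neq 0$. This contradiction shows no such metric $g$ exists.

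The main obstacle, conceptually, is already absorbed into Theorem \ref{theo:psc-min-3d-sphere}: the delicate step is the interplay between the stability inequality, the traced Gauss equation, and Gauss--Bonnet that forces positive Euler characteristic of components. Given that, the remaining work is essentially the topological observation that aspherical manifolds cannot contain homologically nontrivial spheres; one should also make sure that the minimizer $\Sigma$ is genuinely connected or, if not, argue component-by-component as above.
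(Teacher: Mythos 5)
Your proposal is correct and follows essentially the same route as the paper: minimize area in the $[T^2\times\{0\}]$ class via Theorem \ref{theo:minimize}, apply Theorem \ref{theo:psc-min-3d-sphere} to conclude each component of the minimizer is a sphere, and derive a contradiction from $\pi_2(T^3)=0$. You simply spell out in a bit more detail the two-sidedness/stability check and the topological step (sphere $\Rightarrow$ null-homotopic $\Rightarrow$ null-homologous) that the paper leaves implicit.
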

\begin{proof}
Assume that $(T^3,g)$ has positive scalar curvature. Let $\sigma = [T^2 \times \{0\}] \in H_2(T^3;\ZZ)$. Theorem \ref{theo:minimize} yields $\Sigma \in \sigma$ smooth and area-minimizing with respect to the metric $g$. The area-minimizing property and topological considerations imply that $\Sigma$ is a two-sided stable minimal surface, so Theorem \ref{theo:psc-min-3d-sphere} implies that each component of $\Sigma$ is a sphere. On the other hand, $\pi_2(T^3) = 0$ so $[\Sigma]=0\in H_2(T^3;\ZZ)$. This is a contradiction. 
\end{proof}

An alternative proof of Theorem \ref{thm:geroch} was given by Gromov and Lawson \cite{GL:complete} based on the Dirac operator.   Recently, Stern has found another approach \cite{Stern} to Theorem \ref{thm:geroch} via analysis of the level sets of a harmonic function. The methods used in these proofs have had several important consequences including the positive mass theorem in general relativity \cite{SY:PMT1,Witten:positivemass} and Schoen's resolution of the Yamabe problem \cite{Schoen:yamabe}.

\section{Inductive descent.}\label{sec:induct}
Theorem \ref{thm:geroch} actually holds in all dimensions as proven by Schoen and Yau (using minimal surfaces) and Gromov and Lawson (using the Dirac operator). 
\begin{theorem}[{\cite{SY:manuscripta,GromovLawson}}]\label{thm:geroch-high-dim}
The $(n+1)$-torus does not admit a metric of positive scalar curvature. 
\end{theorem}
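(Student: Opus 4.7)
The plan is induction on $n$. The base case, $T^3$, is exactly Theorem \ref{thm:geroch}. For the inductive step, assume the result is known for tori of dimension up to $n$ and suppose toward contradiction that $(T^{n+1},g)$ admits a metric of positive scalar curvature $R>0$. I would first apply Theorem \ref{theo:minimize} in the coordinate slice class $\sigma=[T^n\times\{0\}]\in H_n(T^{n+1};\ZZ)$ (assume first $n+1\leq 7$; the general case is discussed below) to obtain a smooth area-minimizer $\Sigma$. Orientability of $T^{n+1}$ and of $\sigma$ makes $\Sigma$ two-sided, hence a stable minimal hypersurface in the sense of \eqref{eq:stable-min}.

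The analytic core is to show that $\Sigma$ itself admits a metric of positive scalar curvature. Using $H=0$, the general Gauss identity $R_\Sigma=R-2\Ric(\nu,\nu)-|A|^2$ rewrites the stability inequality, just as in \eqref{eq:2nd-var-rearrange-surf}, as
\[
\tfrac12\int_\Sigma (R+|A|^2)\varphi^2 \leq \int_\Sigma |\nabla\varphi|^2+\tfrac12\int_\Sigma R_\Sigma\,\varphi^2.
\]
For $n\geq 3$ the conformal-Laplacian constant $\tfrac{4(n-1)}{n-2}$ strictly exceeds $2$, so this inequality forces the quadratic form of $L_\Sigma:=-\tfrac{4(n-1)}{n-2}\Delta+R_\Sigma$ to be strictly positive on every nonzero test function (the constant-$\varphi$ case is immediate from $R>0$). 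Hence $\lambda_1(L_\Sigma)>0$ on each component of $\Sigma$, and conformally rescaling the induced metric by $u^{4/(n-2)}$ for $u$ a positive first eigenfunction of $L_\Sigma$ produces a metric $\tilde g$ of positive scalar curvature on $\Sigma$.

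To close the induction I need to extract enough torus topology from $\Sigma$. Since $\sigma$ is Poincar\'e dual to $d\theta_{n+1}$, the pullbacks of $d\theta_1,\dots,d\theta_n$ to $\Sigma$ have nonzero $n$-fold cup product, so some component $\Sigma_0$ admits a degree-one map to $T^n$ and in particular carries a nontrivial codimension-one homology class pulled back from a coordinate hyperplane. Minimizing in that class inside $(\Sigma_0,\tilde g)$ yields a new two-sided stable minimal hypersurface, to which the same conformal argument applies and which still detects a top-degree product of pulled-back torus coordinate classes. Iterating this descent $n-2$ times reduces to a closed two-sided stable minimal surface inside a three-manifold of positive scalar curvature that maps with nonzero degree to $T^2$, contradicting Theorem \ref{thm:geroch}.

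I expect two obstacles. The first is topological bookkeeping: one must verify inductively that at every stage of the descent the minimizer still detects a nonzero product of torus cohomology classes, so the induction does not collapse onto a homologically trivial slice; this is handled by always minimizing within a class pulled back from the ambient coordinate one-forms and tracking their cup product. The second, far more serious, is the dimensional bound $n+1\leq 7$ in Theorem \ref{theo:minimize}: in higher ambient dimensions $\Sigma$ may carry a codimension-seven singular set, and propagating stability and the conformal rescaling through singular minimizers requires additional input, via either generic regularity or a perturbation/smoothing construction, and this is where the real difficulty of the high-dimensional statement resides.
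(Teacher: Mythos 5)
Your proposal is correct and follows essentially the same Schoen--Yau inductive descent as the paper: minimize in the coordinate-slice class, rewrite stability via the twice-traced Gauss equation to obtain positivity of $-2\Delta_\Sigma + R_\Sigma$, conformally change via the first eigenfunction of the conformal Laplacian to get pointwise $R>0$ on the hypersurface, track the pulled-back torus cohomology classes, and descend until dimension two. You have merely made explicit the ``pretend'' step that the paper defers to Section~\ref{sec:spect}, and your ``induction on $n$'' framing is cosmetic since your inductive step never actually invokes the hypothesis on lower-dimensional tori; one small slip is that the final contradiction should cite Theorem~\ref{theo:psc-min-3d-sphere} (the stable minimal surface in the ambient PSC three-manifold has genus zero, hence trivial $H^1$, hence cannot pair nontrivially with the pulled-back one-form), not Theorem~\ref{thm:geroch}, since the three-manifold reached at the bottom of the descent is not a torus.
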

\begin{proof}[Sketch of the proof]
Suppose that $(T^{n+1},g)$ has positive scalar curvature. Let $\Sigma_n$ minimize area in the homology class $[T^n\times \{0\}]\in H_n(T^{n+1};\ZZ)$. If $(n+1)\leq 7$ then $\Sigma_n$ is smooth by Theorem \ref{theo:minimize}. By repeating the derivation of \eqref{eq:2nd-var-rearrange-surf} without taking $\varphi=1$, one may obtain the following version of the stability inequality
\begin{equation}\label{eq:stab-inductive}
\int_{\Sigma_n} (R + |A|^2) \varphi^2 \leq \int_{\Sigma_n} 2|\nabla \varphi|^2 + R_{\Sigma_n} \varphi^2
\end{equation}
where $R$ is the ambient scalar curvature and $R_{\Sigma_n}$ is the scalar curvature of the induced metric on $\Sigma_n$. We \emph{pretend} that \eqref{eq:stab-inductive} and $R>0$ implies that $R_{\Sigma_n} > 0$ (this is explained Section \ref{sec:spect}).

Because $(dx^1 \wedge\dots \wedge dx^{n-1})|_{\Sigma} \neq 0 \in H^{n-1}(\Sigma;\RR)$ we may find $\Sigma_{n-1}\subset \Sigma_n$ minimizing area in the homology class dual to $(dx^1 \wedge\dots \wedge dx^{n-2})|_{\Sigma_n}$. Repeating this inductively we find $\Sigma_2\subset \Sigma_3\subset\dots \subset \Sigma_n$. We are pretending that each $\Sigma_k$ has $R_{\Sigma_k}>0$, so $\Sigma_2$ has genus zero by Theorem \ref{theo:psc-min-3d-sphere}. This contradicts $dx^1|_{\Sigma_2} \neq 0 \in H^1(\Sigma_2;\RR) = 0$. 
\end{proof}

\begin{remark}
This proof works for $n+1\leq 7$. Beyond this dimension, area-minimizing hypersurfaces may have extremely complicated singularities. Several methods to extend the inductive descent argument in the presence of potential singularities have been proposed. We describe one approach in Section \ref{sec:generic}. The Dirac operator method does not have dimensional restrictions but is usually only applicable to spin manifolds (including the torus) 
\end{remark}

The inductive descent method has been recently refined by  Gromov and Hanke \cite{GromovHanke} who consider torsion classes in integral homology and by Li and Zhang \cite{LiZhang:topMin} who have generalized the technique to nonorientable manifolds. 

\section{Spectral positivity of curvature.} \label{sec:spect}

In the proof of Theorem \ref{thm:geroch-high-dim} sketched above, we glazed over the inductive analysis of the stability inequality \eqref{eq:stab-inductive}. We explain this briefly here. Observe that \eqref{eq:stab-inductive} implies that (writing $\Sigma=\Sigma_n$) the operator $-2\Delta_\Sigma + R_\Sigma$ is positive. We call this condition \emph{spectral positivity of scalar curvature}. Since $-\Delta_\Sigma$ is nonnegative, this is a weaker condition than positivity of scalar curvature. 

A key principle is that spectral positivity of curvature has---in many cases---the same geometric and topological implications as pointwise positivity of curvature. For example, consider a closed surface $(\Sigma^2,h)$ with strictly positive Gaussian curvature in the spectral sense, $ -\alpha \Delta_\Sigma + K_\Sigma \geq 1$. This is equivalent to
\begin{equation}\label{eq:spect-pos-Gauss}
  \int_\Sigma \varphi^2 \leq \int_\Sigma \alpha |\nabla \varphi|^2 + K_\Sigma \varphi^2 
\end{equation}
for all $\varphi \in C^\infty(\Sigma)$. Then, this principle says that $(\Sigma,h)$ will behave as if it has Gaussian curvature $\geq 1$. For example, if $\Sigma$ is connected we can take $\varphi = 1$ in \eqref{eq:spect-pos-Gauss} and use Gauss--Bonnet to find $\area(\Sigma) \leq 2\pi \chi(\Sigma)$. Thus $\Sigma$ is a sphere and has area $\leq 4\pi$ (the same thing as implied by $K_\Sigma \geq 1$).   In the sequel, we will often pretend (as in the proof of Theorem \ref{thm:geroch-high-dim}) that spectral positivity of curvature is the same thing as pointwise positivity of curvature. For details on how to make such arguments correct and rigorous one may refer to e.g.\ \cite{gromov2019lectures,Chodosh:minscal}.

While we didn't specify the coefficient $\alpha$, this can be extremely important. For example, Schoen and Yau proved \cite{SY:condensation} that if a complete $(\Sigma^2,h)$ satisfies \eqref{eq:spect-pos-Gauss} with $\alpha \in (0,4)$, then $\Sigma$ is compact and $ \diam(\Sigma) \leq \frac{2\pi}{\sqrt{4-\alpha}}$  (cf.\ \cite{HuXuZhang:diameter}). No such estimate is possible for $\alpha \geq 4$ since the hyperbolic plane has $K_{\HH^2} = -1$ and $-\Delta_{\HH^2} \geq -\frac 14$.

\section{Localization via prescribed mean curvature.} The minimal surface (and Dirac) techniques have the downside that area-minimization (or the construction of a harmonic spinor) is a global operation. Gromov introduced a ``$\mu$-bubble'' localization of the minimal surface technique and used it to prove:
\begin{theorem}[{\cite{Gromov:metric-inequalities,gromov1996positive,gromov2019lectures,zhu2020width}}]\label{thm:bandwidth}
For $n\leq 6$, if $(T^{n}\times [-1,1],g)$ has scalar curvature $R \geq n(n+1)$, then
\begin{equation}\label{eq:bandwidth}
d(T^n\times \{-1\},T^n\times \{1\}) < \frac{2\pi}{n+1}.
\end{equation}
\end{theorem}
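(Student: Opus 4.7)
The plan is to argue by contradiction via a Gromov-style $\mu$-bubble, then feed the resulting hypersurface into the inductive descent of Section~\ref{sec:induct}. Assume toward contradiction that $d(T^n\times\{-1\},T^n\times\{1\}) \geq \tfrac{2\pi}{n+1}$. Using this distance bound I will build a smooth $1$-Lipschitz function $\rho$ on the interior of $T^n\times[-1,1]$ taking values in $(-\tfrac{\pi}{n+1},\tfrac{\pi}{n+1})$ and satisfying $\rho(x) \to \mp\tfrac{\pi}{n+1}$ as $x$ approaches $T^n\times\{\mp 1\}$. The prescribing function
\[
h(x) = -n\tan\!\Big(\tfrac{n+1}{2}\rho(x)\Big)
\]
is then smooth on the interior and blows up to $\pm\infty$ near the two boundary components, providing a barrier that will confine any bubble to the interior.

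I then minimize the $\mu$-bubble functional
\[
\mathcal{A}(\Omega) = \mathcal{H}^n(\partial^*\Omega) - \int_\Omega h\,dV_g
\]
over Caccioppoli sets $\Omega$ differing from a fixed reference $\Omega_0 := T^n\times[-1,0]$ only on a compact subset of the interior. A standard direct-method argument (cf.\ \cite{Gromov:metric-inequalities,zhu2020width}) combined with Almgren--De Giorgi--Federer regularity (valid because $n\leq 6$) produces a minimizer whose reduced boundary $\Sigma := \partial^*\Omega$ is a smooth closed hypersurface staying at positive distance from $\partial(T^n\times[-1,1])$. By construction $\Sigma$ is homologous to $T^n\times\{0\}$, so $dx^1\wedge\cdots\wedge dx^{n-1}$ restricts nontrivially to $\Sigma$.

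First variation gives $H_\Sigma = h|_\Sigma$, and stability at the minimizer produces the PMC-stability inequality
\[
\int_\Sigma \big(|A|^2 + \Ric(\nu,\nu) + \partial_\nu h\big)\varphi^2 \leq \int_\Sigma |\nabla\varphi|^2 \qquad \forall \varphi\in C^\infty(\Sigma).
\]
Into this I will substitute the traced Gauss identity $2(|A|^2 + \Ric(\nu,\nu)) = R - R_\Sigma + |A|^2 + h^2$ (the version of \eqref{eq:traced-gauss-min-surf} with $H=h$), the Cauchy--Schwarz bound $|A|^2 \geq h^2/n$, the estimate $|\partial_\nu h| \leq |h'(\rho)|$, and the key pointwise identity $h'(\rho) = -\tfrac{n(n+1)}{2} - \tfrac{n+1}{2n}h^2$ satisfied by the chosen $h$. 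Combined with $R\geq n(n+1)$, all the $h$-dependent contributions cancel up to a positive remainder produced by strict inequality in the distance hypothesis, leaving the spectral positivity estimate
\[
\int_\Sigma \big(2|\nabla\varphi|^2 + R_\Sigma \varphi^2\big) \geq c \int_\Sigma \varphi^2, \qquad c>0.
\]

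Finally, I invoke the inductive descent of Section~\ref{sec:induct} inside $\Sigma$ along the forms $dx^1,\dots,dx^{n-1}$, interpreting positivity of scalar curvature in the spectral sense as in Section~\ref{sec:spect}, to produce a chain $\Sigma_2\subset \cdots\subset\Sigma_{n-1}\subset \Sigma$ of stable area-minimizing slices, each carrying spectrally positive induced curvature; Theorem~\ref{theo:psc-min-3d-sphere} then forces every component of $\Sigma_2$ to be a sphere, contradicting $dx^1|_{\Sigma_2}\neq 0$. The main obstacle is calibrating the coefficients in $h = -\alpha\tan(\beta\rho)$ so that the stability computation closes with exactly the width $\tfrac{2\pi}{n+1}$: the specific choice $\beta=\tfrac{n+1}{2}$, $\alpha=n$ simultaneously saturates the Gauss-equation cross term $h^2$ and the Cauchy--Schwarz bound $|A|^2 \geq h^2/n$, which is precisely what makes $\tfrac{2\pi}{n+1}$ the sharp bandwidth.
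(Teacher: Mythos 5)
Your proposal is correct and follows essentially the same $\mu$-bubble strategy as the paper: prescribe $h = -\alpha\tan(\beta\rho)$ along a Lipschitz distance-type function, use the second variation of the prescribed-mean-curvature functional together with the traced Gauss equation and the Cauchy--Schwarz estimate $|A|^2\geq h^2/n$, and calibrate $(\alpha,\beta)=(n,\tfrac{n+1}{2})$ so that the ODE $h'=-\tfrac{n(n+1)}{2}-\tfrac{n+1}{2n}h^2$ makes everything cancel; your constants reduce to the paper's $h=-2\tan(\tfrac{3\rho}{2}-\tfrac\pi2)$ and the coefficient $3+\tfrac34 h^2$ when $n=2$. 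The only differences are presentational: the paper explicitly sketches only $n=2$ (where $\Sigma$ is a surface and Gauss--Bonnet finishes directly against the degree-one map to $T^2$), while you spell out the general $n\leq 6$ case and finish with inductive descent as in Section~\ref{sec:induct}; you also correctly note the tension between the nonstrict contradiction hypothesis $d\geq \tfrac{2\pi}{n+1}$ and the exact cancellation (the paper acknowledges the same issue by admitting it really needs $\delta>0$).
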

Note that no assumptions are made at the boundary and the theorem relates scalar curvature and distance. 
\begin{proof}[Sketch of the proof for $n=2$]
Fix a function $h$ on the ``band'' $W : = T^2 \times [-1,1]$ and write $\partial_\pm W = T^2 \times \{\pm 1\}$. Then, for $\Omega\subset W$ containing a neighborhood of $\partial_-W$ we consider the functional
\[
\mu(\Omega) : = \area(\partial\Omega) - \int_\Omega h
\]
If a critical point of $\mu(\cdot)$ exists with $\partial\Omega = \partial_-W \cup \Sigma$, $\Sigma$ contained in the interior of $W$, the first variation of $\mu(\cdot)$ implies that the mean curvature of $\Sigma$ satisfies the prescribed mean curvature equation $H=h$. 

The second variation of $\mu(\cdot)$ becomes 
\[
 \int_\Sigma \left( 3 + \frac{3}{4} h^2 + \nabla_\nu h\right) \varphi^2 \leq \int_\Sigma |\nabla \varphi|^2 +  K \varphi^2
\]
for $K$ the Gaussian curvature of the induced metric on $\Sigma$. Thus, as long as the inequality 
\begin{equation}\label{eq:mu-bubb-potential1}
3 + \frac{3}{4} h^2 + \nabla_\nu h >0
\end{equation} 
holds, we may take $\varphi=1$ as in the proof of Theorem \ref{theo:psc-min-3d-sphere} to find that each component of $\Sigma$ is a sphere. This is a contradiction, since the projection $W\to T^2$ restricts to a degree $1$ map $\Sigma\to T^2$. 

Take $h=h(\rho)$ where $\rho=d(\cdot,\partial_-W)$ (or an appropriate smoothing). Then, since $|\nabla \rho| = 1$ a sufficient condition for \eqref{eq:mu-bubb-potential1} to hold is
\[
3 + \frac 34 h^2 - |h'| = \delta > 0. 
\]
Set $\delta=0$ for simplicity (one should actually take $\delta>0$ sufficiently small) and observe the following function is a solution to the ODE:
\[
h(\rho) = -2 \tan \left(\tfrac{3\rho}{2} - \tfrac \pi 2\right).
\]
Note that $h(\rho) \to \infty$ as $\rho\to 0$ and $-\infty$ as $\rho\to \frac{2\pi}{3}$. 

For contradiction we now assume that $d(\partial_-W,\partial_+W)> \frac{2\pi}{3}$. Now, the ``effective mean curvature'' $H-h$ of the boundary of the region $W' : = \rho^{-1}[0,\frac{2\pi}{3}] \subset W$ has the correct sign for us to construct a minimizer. This gives a contradiction using the second variation argument above. 
\end{proof}

Note that Zeidler and Cecchini have discovered a similar localization of the Dirac method \cite{Zeidler:band,Cecchini:longneck,CZ}. 

The bandwidth inequality is proven by contradiction. However, in an important (albeit trivial) observation made with C.\ Li in \cite{ChodoshLi2020generalized} is that the $\mu$-bubble technique could be used to give direct information on sufficiently long ``annular regions'' with strictly positive scalar curvature but unknown topological type. 

\begin{theorem}\label{theo:mu-bubble-annulus}
Suppose that $(W,g)$ is a compact Riemannian manifold with scalar curvature $R \geq R_0 > 0$ and $\partial_\pm W$ is a partition of components of $\partial W$ into two non-empty subsets so that $d(\partial_-W,\partial_+W) \geq L=L(n,R_0)$. Then there's a hypersurface $\Sigma\subset W$ separating $\partial_+W$ from $\partial_-W$ so that the operator $-2\Delta_\Sigma + R_\Sigma$ is positive. 
\end{theorem}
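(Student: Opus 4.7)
The strategy is to apply the $\mu$-bubble construction from the proof of Theorem \ref{thm:bandwidth} directly in $W$, producing $\Sigma$ as a stable prescribed-mean-curvature hypersurface rather than using its existence to derive a contradiction.

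I would begin by fixing a smooth approximation $\rho$ of $d(\cdot,\partial_-W)$ on $W$ and a radial prescribing function $h = h(\rho)$ satisfying a first-order Riccati-type ODE
\[
  h'(\rho) = -c_n\bigl(R_0 + a_n h(\rho)^2\bigr) - \delta,
\]
on a maximal interval $[0,L_0]$, with $h(0^+) = +\infty$ and $h(L_0^-) = -\infty$; the constants $a_n,c_n > 0$ are chosen so that the bracket appearing in the second variation of the $\mu$-bubble functional (after applying the doubly-traced Gauss equation, as in the derivation of \eqref{eq:stab-inductive}) is pointwise at least $\delta > 0$, and $L_0 = L_0(n,R_0)$ is finite because the ODE blows up in finite time. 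Setting $L := L_0 + \eps$ for small $\eps > 0$, the hypothesis $d(\partial_-W,\partial_+W) \geq L$ ensures that $\{\rho \leq L_0\}$ is a compact region of $W$ disjoint from $\partial_+W$.

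Next, I would minimize
\[
  \mu(\Omega) := \area(\partial \Omega \setminus \partial_-W) \;-\; \int_\Omega h\, d\vol_g
\]
over Caccioppoli sets $\Omega \subset W$ containing a fixed neighborhood of $\partial_-W$. The blow-ups of $h$ at $\rho = 0$ and at $\rho = L_0$ serve as two-sided barriers, pinning $\partial \Omega \setminus \partial_-W$ strictly inside the open annulus $\{0 < \rho < L_0\}$; hence a minimizer $\Omega_\star$ exists, and $\Sigma := \partial \Omega_\star \setminus \partial_-W$ is smooth by the codimension-one regularity underlying Theorem \ref{theo:minimize}. By construction $\Sigma$ separates $\partial_-W$ from $\partial_+W$. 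The first variation of $\mu$ yields $H = h$ along $\Sigma$, and the second variation combined with the Gauss equation produces
\[
  \int_\Sigma Q(x)\, \varphi^2 \;\leq\; \int_\Sigma 2|\nabla \varphi|^2 + R_\Sigma\, \varphi^2
\]
for every $\varphi \in C_c^\infty(\Sigma)$, where $Q(x) \geq \delta > 0$ pointwise by the choice of ODE. This is exactly positivity of the operator $-2\Delta_\Sigma + R_\Sigma$.

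The main obstacle is implementing the barrier argument rigorously: $h$ is not in $L^\infty$ near $\rho = 0$ and $\rho = L_0$, and the distance function is only Lipschitz. One therefore has to work with a suitable mollification of $\rho$, a truncated sequence of prescribing functions $h_k$ that recover the singular behavior in the limit, and (in ambient dimensions $\geq 8$) the usual allowance of a singular set of dimension $\leq n-7$ for the minimizer. These technicalities follow the template in \cite{Gromov:metric-inequalities,gromov2019lectures,zhu2020width,ChodoshLi2020generalized}.
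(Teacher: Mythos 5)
Your proposal is correct and follows exactly the approach the paper intends: it takes the $\mu$-bubble second-variation argument sketched in the proof of Theorem~\ref{thm:bandwidth}, with a radial prescribing function $h(\rho)$ chosen as an appropriate Riccati solution blowing up at both ends of a subinterval of length $L_0(n,R_0)$, and uses the two-sided blow-up of $h$ as barriers to extract the minimizer directly rather than running the argument by contradiction. This is precisely the ``trivial observation'' from \cite{ChodoshLi2020generalized} that the paper cites in stating the theorem, and combining the second variation, $H=h$, the doubly-traced Gauss equation, and $|A|^2\geq H^2/n$ yields the spectral positivity of $-2\Delta_\Sigma+R_\Sigma$ as you claim.
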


\section{The $K(\pi,1)$ conjecture.} We now discuss some recent applications of the minimal surface and $\mu$-bubble techniques to the study of scalar curvature on aspherical manifolds. An \emph{aspherical manifold} (also called a $K(\pi,1)$ manifold) has contractible universal cover. The following conjecture, made by Schoen and Yau as well as Gromov and Lawson in the 1980s would generalize Theorems \ref{thm:geroch} and \ref{thm:geroch-high-dim}:
\begin{conjecture}\label{conj:asphericalPSC}
There is no Riemannian metric of positive scalar curvature on a closed aspherical manifold. 
\end{conjecture}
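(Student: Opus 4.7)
The plan is to run the inductive descent of Theorem \ref{thm:geroch-high-dim}, replacing the explicit cohomological slicing of the torus (where one slices against $dx^{1} \wedge \cdots \wedge dx^{n-1}$) by $\mu$-bubble constructions of the form in Theorem \ref{theo:mu-bubble-annulus}, and using asphericity to supply topological non-triviality at each stage of the descent.

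Suppose for contradiction that $(M^{n+1},g)$ is a closed aspherical Riemannian manifold with $R \geq R_{0} > 0$. Because $M$ is a closed $K(\pi,1)$ of positive dimension, $\pi := \pi_{1}(M)$ is infinite, so the universal cover $\tilde M$ is non-compact, contractible, and inherits $R \geq R_{0}$. The first step is to produce a long ``band'' in a suitable covering of $M$: pick an infinite-order element $\alpha \in \pi$ (or more generally exploit an infinite subgroup via a non-proper map to a lower-dimensional $K(\pi',1)$) and pass to the associated cyclic cover $\bar M \to M$, inside which one has an annular region of arbitrarily large width. Applying Theorem \ref{theo:mu-bubble-annulus} on that annulus furnishes a separating hypersurface $\Sigma_{n} \subset \bar M$ such that $-2\Delta_{\Sigma_{n}} + R_{\Sigma_{n}}$ is positive.

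Next, iterate: pretending (as justified in Section \ref{sec:spect}) that spectral positivity of scalar curvature on $\Sigma_{n}$ is pointwise positivity, find inside $\Sigma_{n}$ another sufficiently long annular region, apply Theorem \ref{theo:mu-bubble-annulus} again to extract $\Sigma_{n-1} \subset \Sigma_{n}$, and continue. This produces a nested tower $\Sigma_{2} \subset \Sigma_{3} \subset \cdots \subset \Sigma_{n}$, each stage equipped with spectral PSC. If one can arrange that $\Sigma_{2}$ inherits non-trivial genus from the asphericity of $M$, then Theorem \ref{theo:psc-min-3d-sphere} (in its spectral form) yields the contradiction.

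The hard part---and the reason this conjecture is in fact open in general---is topological control of the iterated slices. Asphericity is a global property of $M$ and does not automatically transmit to $\mu$-bubble slices; at each stage one must choose $\Sigma_{k}$ to retain a non-trivial topological witness of the $K(\pi,1)$ structure, e.g.\ an incompressible free abelian subgroup or a non-trivial homomorphism $\pi_{1}(\Sigma_{k}) \to \pi'$ to some $K(\pi',1)$ of the correct dimension. In low dimensions ($n+1 \leq 5$) this bookkeeping can be arranged by a judicious choice of cyclic covers and algebraic features of $\pi$; in higher dimensions it is not known how to preserve the appropriate topological non-triviality through the descent, and for $n+1 \geq 8$ the potential singularities of area-minimizing hypersurfaces compound the difficulty. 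A secondary obstacle is a rigorous implementation of the ``spectral implies pointwise'' step at each iteration, which requires conformal changes with carefully tracked dimension-dependent constants.
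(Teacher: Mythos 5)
This statement is a conjecture, not a theorem, and the paper does not claim a proof of it in general --- it only proves the cases $n+1 \leq 5$ (Theorem \ref{theo:asph45}), with a sketch given for $n+1=3$. You correctly recognize that the conjecture is open, so a ``blind proof'' isn't on the table. What can be compared is your proposed strategy for the known low-dimensional cases against the paper's actual argument, and there the two approaches differ substantively.

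Your sketch routes through a cyclic cover attached to an infinite-order element of $\pi_1(M)$, then tries to iterate $\mu$-bubble slicing all the way down to a surface $\Sigma_2$, concluding by a genus/Gauss--Bonnet argument as in Theorem \ref{theo:psc-min-3d-sphere}. The paper does something different. For $n+1=3$ it passes directly to the universal cover $\tilde M$ (contractible, non-compact, inherits $R \geq R_0>0$) and constructs a single $\mu$-bubble $\Sigma = \partial K$ in a metric annulus $B_{r+L}(p)\setminus B_r(p)\subset\tilde M$ via Theorem \ref{theo:mu-bubble-annulus}. Asphericity enters at two points: $H_2(\tilde M;\ZZ)=0$ forces $\Sigma$ to be connected, and uniform contractibility of $\tilde M$ is then played against the fact that $\Sigma$ has uniformly bounded diameter (because spectral PSC on a surface bounds its diameter) while the region $K$ it bounds, containing $B_r(p)$, has arbitrarily large diameter as $r\to\infty$. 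The contradiction is a diameter/filling contradiction, not a genus contradiction; your proposed endgame ``$\Sigma_2$ inherits non-trivial genus from asphericity'' is exactly what one cannot arrange for a general aspherical manifold, since --- unlike the torus --- there is no explicit cohomological class $dx^1$ to pull down. For $n+1\in\{4,5\}$ the paper also does not iterate $\mu$-bubble slicing; instead it reduces the dimension by one via a linking argument (a curve linked with a faraway codimension-two submanifold, then an area-minimizing hypersurface bounded by that submanifold), and in dimension five adds the ``slice and dice'' decomposition. So while your instincts about the ingredients ($\mu$-bubbles, spectral PSC, working in a cover) are in the right ballpark, the actual mechanism of the known proofs --- universal cover, uniform contractibility, diameter bounds, and linking/filling rather than naive inductive descent --- is materially different from what you describe, and the specific obstacle you name (preserving a $K(\pi,1)$ witness through iterated slices) is circumvented rather than overcome in the paper's approach.
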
 
As explained by Gromov in \cite[\S 4, 16]{gromov2017questions}, this is one of the ``standard conjectures'' for scalar curvature and represents the most basic question one may ask about scalar curvature on manifolds with ``large'' fundamental groups. 

The Gauss--Bonnet theorem and classification of surfaces shows that Conjecture \ref{conj:asphericalPSC} holds for surfaces. For three-manifolds, Conjecture \ref{conj:asphericalPSC} was proven by Schoen and Yau \cite{SY:3d-torus} (using minimal surface methods) and Gromov and Lawson \cite{GL:complete} (using Dirac operator methods). In fact, by combining a mild generalization of this result with Perelman's resolution of the Poincar\'e conjecture one may obtain a complete classification of closed three-manifolds admitting Riemannian metrics with positive scalar curvature: 
Indeed, the resolution of the Poincar\'e conjecture implies that any orientable $3$-manifold has the prime decomposition
\[
M = S^3/\Gamma_1 \#\dots \# S^3/\Gamma_a \# b(S^2\times S^1) \# K_1 \# \cdots \# K_c,
\]
where each $\Gamma_1,\dots,\Gamma_a$ is a discrete subgroup of $SO(4)$ acting freely on $S^3$ and each $K_1,\dots,K_c$ is an aspherical manifold. Ruling out aspherical factors we obtain:
\begin{theorem}\label{theo:psc-3d}
A closed orientable\footnote{For the non-orientable case see \cite{LiZhang:cover}.} $3$-manifold $M$ admits a Riemannian metric of positive scalar curvature if and only if $M = S^3/\Gamma_1 \#\dots \# S^3/\Gamma_a \# b(S^2\times S^1) $. 
\end{theorem}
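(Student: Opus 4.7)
The statement is an ``if and only if,'' so I address the two directions separately.

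For the ``if'' direction, I would construct a PSC metric on each listed building block and combine them. The round metric on $S^3$ has constant positive scalar curvature and is $SO(4)$-invariant, so for any finite $\Gamma \subset SO(4)$ acting freely, it descends to a PSC metric on $S^3/\Gamma$. The product of the round $S^2$ with a circle has scalar curvature equal to that of the $S^2$ factor (since scalar curvature is additive on products and $S^1$ is flat), hence is positive. For connected sums, I would invoke the Gromov--Lawson--Schoen--Yau surgery theorem: PSC is preserved under surgery in codimension at least three. In dimension three, a connected sum is precisely surgery on an embedded $S^0$, which has codimension $3$, so iterating produces a PSC metric on any finite connected sum of the listed pieces.

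For the ``only if'' direction, I would combine Perelman's work with the three-dimensional case of Conjecture \ref{conj:asphericalPSC}. Given a PSC metric on $M$, the Kneser--Milnor prime decomposition, together with Perelman's resolution of the elliptization conjecture, implies that each prime factor of $M$ is of the form $S^3/\Gamma$, of the form $S^2 \times S^1$, or an irreducible 3-manifold with infinite fundamental group. By the sphere theorem the latter are aspherical, so the task is to rule them out. The ``mild generalization'' alluded to in the statement is the enhancement: PSC on a connected sum $M_1 \# M_2$ implies PSC on each summand (granted both are not $S^3$). Granting this, one reduces to Conjecture \ref{conj:asphericalPSC} in dimension three, which was proved by Schoen--Yau through an extension of Theorem \ref{theo:psc-min-3d-sphere} (area-minimizing in the class of an incompressible surface of positive genus and running the Gauss--Bonnet stability argument) and by Gromov--Lawson via the Dirac operator.

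The main obstacle is the ``descent of PSC to prime factors'' step. The standard approach is a Gromov--Lawson inverse-surgery near the connect-sum 2-sphere separating the two summands: deform the metric to a long positively curved neck $S^2 \times [-L, L]$ and cap off with a round ball of sufficiently small radius, preserving $R > 0$ throughout. The delicate part is ensuring that this local modification is compatible with positivity of scalar curvature globally, which requires quantitative curvature estimates in the transition region. In three dimensions one may alternatively invoke Perelman's Ricci flow with surgery, which from a PSC initial metric becomes extinct in finite time and whose topological changes along the way are precisely $S^2$-surgeries producing the $S^3/\Gamma$ and $S^2 \times S^1$ factors; but this route effectively bypasses the minimal surface viewpoint that motivates the theorem.
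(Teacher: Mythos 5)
Your outline of the pieces is mostly right, but the logical structure of the ``only if'' direction does not match what the paper (and the literature) actually does, and the step you flag as ``the main obstacle'' contains a genuine error. You propose to deduce that a PSC metric on $M_1\#M_2$ gives PSC metrics on $M_1$ and $M_2$ separately, via what you call a ``Gromov--Lawson inverse-surgery.'' There is no such technique: the Gromov--Lawson/Schoen--Yau surgery theorem goes only one way (from PSC on the pieces to PSC on the connected sum). Whether PSC descends to connect-sum summands is in general a well-known open problem; the ``deform to a long PSC neck $S^2\times[-L,L]$ and cap off'' picture has no reason to be available, since a PSC metric on $M_1\#M_2$ need not look like a neck anywhere near the separating sphere, nor need that sphere have small area. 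In dimension three the descent statement does happen to be true, but only as a \emph{consequence} of Perelman's Ricci flow with surgery (your alternative route), not as a direct metric surgery argument.

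The paper's proof avoids this entirely. One takes the Kneser--Milnor prime decomposition, upgraded by Perelman (elliptization) so that the finite-$\pi_1$ prime factors are spherical space forms $S^3/\Gamma_i$ with $\Gamma_i\subset SO(4)$, the $S^2\times S^1$ factors are the remaining primes with $\pi_2\neq 0$, and everything else is aspherical. The ``mild generalization'' of Theorem~\ref{theo:psc-min-3d-sphere} alluded to in the text is not ``PSC descends to summands'' but rather: if $M$ is \emph{any} manifold of the form $M'\#K$ with $K$ closed aspherical, then $M$ itself carries no PSC metric. This is proved directly on $M$ (via incompressible minimal surfaces à la Schoen--Yau, or enlargeability à la Gromov--Lawson), without ever isolating a PSC metric on $K$. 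So aspherical factors are ruled out in place, and the remaining decomposition is exactly the one in the statement. Your ``if'' direction (round $S^3/\Gamma$, product metric on $S^2\times S^1$, and codimension-$\geq 3$ surgery for the connected sums) is correct and standard. To repair the ``only if'' direction you should replace the descent-of-PSC step with the direct connected-sum obstruction for aspherical factors; if you instead want to use Ricci flow with surgery, that works but, as you note, it is a logically independent proof and not the one the paper is presenting.
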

We emphasize that Perelman's work on the Ricci flow \cite{perelman:entropy,perelman:surgery} also proves this result directly.

In higher dimensions, Conjecture \ref{conj:asphericalPSC} has been proven for the following special classes of aspherical manifolds. 
\begin{enumerate}
\item Gromov and Lawson and Cecchini and Schick have proved \cite{GL:complete,CecchiniSchick} that Conjecture \ref{conj:asphericalPSC} holds for manifolds that admit metrics of non-positive sectional curvature (aspherical by the Cartan--Hadamard theorem). 
\item Rosenberg proved \cite{Rosenberg1983C*algebra} that if $M$ is a closed aspherical manifold and $\pi= \pi_1(M)$ satisfies the ``strong Novikov conjecture'' then $M$ does not admit a metric of positive scalar curvature. The strong Novikov conjecture is known\footnote{Sapir constructed \cite{Sapir:higman} closed aspherical $M^4$ whose fundamental groups are not known to satisfy the strong Novikov conjecture.} to hold for many groups (see e.g.\  \cite{Yu1998novikov}). 
\end{enumerate}
With Li, we were recently able to use minimal surface methods  to resolve  Conjecture \ref{conj:asphericalPSC} for four- and five-dimensional manifolds. Gromov simultaneously generalized our four-dimensional result to five dimensions.
\begin{theorem}[{\cite[Theorem 2]{ChodoshLi2020generalized}}, {\cite[Main Theorem]{Gromov2020metrics}}]\label{theo:asph45}
If $M^n$ is a closed aspherical manifold with $n \in \{2,3,4,5\}$ then it does not admit a Riemannian metric of positive scalar curvature. 
\end{theorem}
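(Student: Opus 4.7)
The cases $n=2,3$ follow from Gauss--Bonnet and Theorem \ref{theo:psc-min-3d-sphere} respectively, so I focus on $n \in \{4,5\}$. The plan is to combine the inductive descent of Section \ref{sec:induct} with the $\mu$-bubble localization of Theorem \ref{theo:mu-bubble-annulus}, reducing matters to the classification of closed three-manifolds admitting spectrally positive scalar curvature.

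Suppose for contradiction that $(M^n,g)$ is closed aspherical with $R \geq R_0 > 0$. Since closed aspherical manifolds have torsion-free and infinite fundamental group, I pick $\gamma \in \pi_1(M)$ of infinite order and pass to the cover $\hat M \to M$ associated to $\langle \gamma \rangle$. Then $\hat M$ is aspherical with $\pi_1 \cong \ZZ$, has two ends, and inherits $R \geq R_0$. Because $M$ is compact, the deck $\ZZ$-action on $\hat M$ moves a fundamental domain arbitrarily far, so Theorem \ref{theo:mu-bubble-annulus} applied to a long annular region in $\hat M$ produces a closed hypersurface $\Sigma^{n-1} \subset \hat M$ separating the two ends and satisfying that $-2\Delta_\Sigma + R_\Sigma$ is a positive operator. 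For $n=4$, this $\Sigma$ is already three-dimensional. For $n=5$, I iterate the cover-and-slice procedure inside $\Sigma$---picking an infinite-order element in $\pi_1(\Sigma)$ and cyclically covering---using the principle of Section \ref{sec:spect} that spectral positivity with the correct coefficient descends through a further $\mu$-bubble slice, to produce a three-dimensional $\Sigma' \subset \Sigma$ with spectrally positive scalar curvature.

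A spectral analogue of Theorem \ref{theo:psc-3d} forces any closed three-manifold with spectrally positive scalar curvature to be a connect sum of spherical space forms and copies of $S^2 \times S^1$, and in particular to contain no aspherical prime summand. On the other hand, the slice separates the two ends of $\hat M$, so represents the nonzero generator of $H_{n-1}(\hat M;\ZZ_2)$. Composing the inclusion with the identity classifying map $M \to K(\pi_1(M),1) = M$, I want to argue that the natural map $\Sigma \to M$ detects a class in $H_*(\pi_1(M);\ZZ_2)$ that cannot be carried by a three-manifold of the preceding type.

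The main obstacle is exactly this last step: translating the homological non-triviality of the slice into a \emph{homotopical} or large-scale obstruction that survives the classification of spectrally positive-scalar-curvature three-manifolds. The inclusion $\Sigma \hookrightarrow \hat M$ may kill much of $\pi_1$, so naive $\pi_1$-injectivity is unavailable; one must work with a coarser invariant (for instance a macroscopic degree for $\Sigma \to B\pi_1(M)$, or the nonvanishing of a carefully chosen class in $H^*(\pi_1(M))$ paired against $[\Sigma]$) and verify that such an invariant is preserved by both $\mu$-bubble slicing and prime decomposition. A secondary technical point---sharper when $n=5$---is maintaining control of the constant in front of $\Delta_\Sigma$ across the iterated descent, exactly the coefficient-sensitive phenomenon highlighted in Section \ref{sec:spect}, so that Theorem \ref{theo:mu-bubble-annulus} can be re-applied to the next slice.
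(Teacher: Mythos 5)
Your route diverges from the paper's and, unfortunately, breaks down before the step you flag as the main obstacle. Two structural problems with the $\ZZ$-cover setup are fatal as stated. First, the cover $\hat M$ associated to $\langle\gamma\rangle$ does \emph{not} in general have two ends; the number of ends of $\hat M$ is the relative number of ends $e(\pi_1(M),\ZZ)$, which is often $1$. For example, for $M = T^4$ and $\gamma$ a primitive generator, $\hat M \cong S^1\times\RR^3$ has one end, so there is no ``long annulus separating the two ends'' and no hypersurface $\Sigma$ forced to separate anything. Second, even when there are two ends, $\hat M$ is aspherical with $\pi_1 \cong \ZZ$, hence homotopy equivalent to $S^1$, so $H_{n-1}(\hat M;\ZZ_2)=0$ for $n\geq 3$; the class you want your slice to represent is automatically zero in ordinary homology. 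Any ``nontriviality'' of the separating hypersurface must be phrased using compactly supported or Borel--Moore homology, or---as in the paper---via a large-scale geometric invariant rather than a homology class on the noncompact cover. Finally, the iterated ``cover and slice'' for $n=5$ needs $\pi_1(\Sigma)$ to contain an infinite-order element and $\Sigma$ to be aspherical, neither of which is supplied by the construction: the $\mu$-bubble slice can be, say, a sphere, at which point the iteration has nowhere to go.

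The paper's argument sits on the universal cover $\tilde M$ (contractible, hence $H_k(\tilde M;\ZZ)=0$), not a $\ZZ$-cover. For $n=3$ the mechanism of contradiction is \emph{not} a nontrivial homology class but uniform contractibility: the $\mu$-bubble $\Sigma = \partial K$ has uniformly bounded diameter while the compact region $K$ it bounds has arbitrarily large diameter, which is impossible in a uniformly contractible space. For $n=4$ one first reduces dimension by a linking construction---a curve linked with a far away codimension-two submanifold, then an area-minimizing hypersurface with that boundary---and runs the $\mu$-bubble/uniform-contractibility argument on the hypersurface. For $n=5$ the resulting $\mu$-bubble is three-dimensional and has no a priori diameter bound, so one replaces the diameter estimate with the ``slice and dice'' decomposition (Section \ref{sec:PSC-geo}), rather than invoking the classification of PSC three-manifolds. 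So the difficulties you correctly anticipate in the last paragraph (macroscopic degree, preservation under slicing, coefficient control) are essentially what the linking and slice-and-dice machinery is designed to handle, but the ambient space and the obstruction are different from what you propose, and the $\ZZ$-cover starting point must be abandoned.
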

There are aspherical $4$-manifolds $M^4$ with $H_{3}(M;\ZZ) = 0$ (cf.\ \cite{RT}), so the minimization technique used in Theorem \ref{thm:geroch} does not directly apply in general (cf.\ \cite[Theorem F]{wang2019thesis}). Instead, we work on the universal cover. We give some indication of the proof of Theorem \ref{theo:asph45}, from \cite{ChodoshLi2020generalized} by revisiting the case of aspherical three manifolds. 

\begin{proof}[Sketch of the proof for $n=3$]
By scaling, assume that $(M,g)$ has $R\geq 1$. Thus, so does the universal cover $(\tilde M,\tilde g)$. Consider an annular region $W : = B_{r+L}(p) \setminus B_r(p)$. Theorem \ref{theo:mu-bubble-annulus} implies that there is a $\mu$-bubble in $W$ we find $\Sigma = \partial K$ with $K$ compact and $B_r(p) \subset K$ so that the induced metric on $\Sigma$ behaves as if it has strictly positive Gaussian curvature. In particular, $\Sigma$ has bounded diameter. Without loss of generality we assume there are no compact components of $\tilde M\setminus K$ (otherwise we can add them to $K$ without change). Suppose for contradiction that there is more than one component of $\Sigma$. Then, any component $\Sigma'$ separates $\tilde M$ into two non-compact sets, so $[\Sigma']\neq 0 \in H_2(\tilde M;\ZZ)$. Since $M$ is aspherical, $H_2(\tilde M;\ZZ) = 0$. This contradiction proves $\Sigma$ is connected. 

Since $(\tilde M,\tilde g)$ is contractible and the universal cover of a closed manifold, it's uniformly contractible (cf.\ \cite[Proposition 10]{ChodoshLi2020generalized}). This contradicts the fact that $\Sigma$ has bounded diameter  but the unique compact region it bounds $K$ has arbitrarily large diameter for $r$ large. 
\end{proof}

We briefly comment on the proof of Theorem \ref{theo:asph45} in higher dimensions. The first insight is that it's possible to ``reduce the dimension of the proof'' by one by constructing a curve linked with a far away codimension two submanifold. The argument above is then applied on an area-minimizing hypersurface bounded by the codimension two submanifold. This essentially handles the four-dimensional case. In five dimensions,  the resulting $\mu$-bubble will be three-dimensional. Three-dimensional Riemannian manifolds with scalar curvature $R\geq 1$ do not have diameter bounds so one must ``slice and dice'' as discussed in the sequel. 

\section{The geometry of positive scalar curvature.} \label{sec:PSC-geo}
Observing that for any closed $M^{n-2}$, the product $M^{n-2} \times S^2$ admits  positive scalar curvature (take a product metric where the sphere has very small radius), we obtain the vague sense that a manifold of positive scalar curvature is ``small'' in at least two dimensions. This notion can be made precise in various ways, but the most relevant one for us is the following notion.
\begin{definition}
We say that a metric space $(X,d)$ has $k$-Urysohn width $\leq W$ if there exists a $k$-dimensional simplicial complex $Y$ and a continuous map $f: X\to Y$ with $\diam(f^{-1}(y)) \leq W$ for all $y \in Y$.
\end{definition}
The Bonnet--Myers theorem says that if $(M^n,g)$ has $\Ric \geq n-1$ then $(M,g)$ has $0$-Urysohn width $\leq \pi$. The following is a well-known conjecture of Gromov:
\begin{conjecture}[{\cite[Conjecture 34]{gromov2017questions}}]\label{conj:urysohn}
If $(M^n,g)$ is closed with scalar curvature $R\geq 1$ then $(M,g)$ has $(n-2)$-Urysohn width $\leq W=W(n)$. 
\end{conjecture}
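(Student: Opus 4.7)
I would try to prove the conjecture by induction on $n$, strengthening the hypothesis so that only spectral positivity of scalar curvature is required --- this being exactly what the $\mu$-bubble construction produces at each step. Specifically, I would aim to establish that there exist dimensional constants $c_n$ and $W(n)<\infty$ such that any closed Riemannian $n$-manifold $(M,g)$ satisfying
\begin{equation}\label{eq:spec-bd-plan}
\int_M\bigl(c_n|\nabla\varphi|^2 + R\,\varphi^2\bigr) \geq \int_M \varphi^2 \qquad \text{for all } \varphi\in C^\infty(M)
\end{equation}
admits a continuous map to a simplicial complex of dimension at most $n-2$ whose point fibers have diameter at most $W(n)$. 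The base case $n=2$ is then exactly the Schoen--Yau spectral diameter estimate recalled in Section \ref{sec:spect}: as long as $c_2$ stays strictly below the associated critical threshold, any such surface has bounded diameter, which is the sought $0$-Urysohn width bound.

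For the inductive step I would adapt Gromov's \emph{slice and dice} scheme. Fix $p\in M$, let $f(x):=d(x,p)$, and pick a slicing scale $L=L(n)\gg W(n-1)$. For each integer $k\geq 1$, apply the $\mu$-bubble construction of Theorem \ref{theo:mu-bubble-annulus} inside the slab $S_k:=f^{-1}([(k-1)L,(k+1)L])$ to obtain a hypersurface $\Sigma_k\subset S_k$ separating its two boundary components. Combining the $\mu$-bubble second variation (arranged in analogy with \eqref{eq:stab-inductive}) with the assumed spectral bound \eqref{eq:spec-bd-plan} on $M$ should yield on each $\Sigma_k$ a spectral inequality of the form
\[
\int_{\Sigma_k}\bigl(c_{n-1}|\nabla\varphi|^2 + R_{\Sigma_k}\varphi^2\bigr) \geq \int_{\Sigma_k}\varphi^2
\]
with $c_{n-1}$ depending only on $n$. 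By the inductive hypothesis, each $\Sigma_k$ then admits a map $\pi_k\colon\Sigma_k\to Y_k$ to an $(n-3)$-complex with fibers of diameter $\leq W(n-1)$.

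I would then assemble the target $(n-2)$-complex $Y$ from the pieces $Y_k$ together with $1$-dimensional connectors parametrizing the tiles between consecutive bubbles, and build a continuous map $M\to Y$ whose restriction to each tile interpolates, via a gradient-flow retraction of $f$, between the $\pi_k$ on its two bounding bubbles. A generic fiber would then be contained in an $f$-band of width $O(L)$ that projects onto a fiber of diameter $\leq W(n-1)$ of some $\pi_k$, giving a total diameter bound of the form $W(n):=W(n-1)+O(L)$.

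The main obstacle, in my view, is \emph{propagating the spectral constants} $c_n$ through the induction. Each descent introduces additional terms like $\tfrac{3}{4}h^2+\nabla_\nu h$ in the $\mu$-bubble second variation (as in the proof of Theorem \ref{thm:bandwidth}) and forces a reweighting of the test function $\varphi$; quantifying this so that $c_n$ stays strictly below the Schoen--Yau threshold all the way down to dimension two is the analytic heart of the problem, and is the same difficulty that currently caps Theorem \ref{theo:asph45} at $n=5$. A second serious issue, already present once $n\geq 8$, is that the $\mu$-bubbles $\Sigma_k$ may carry a singular set of dimension $\leq n-8$, as noted in Section \ref{sec:induct}: the induction must therefore be carried through either by a generic regularization procedure or, in the spirit of \cite{Chodosh:minscal}, by a stratified theory of spectrally positive stable $\mu$-bubbles on which the above descent still makes sense.
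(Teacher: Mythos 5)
What you have written is a research programme, not a proof, and that is appropriate because the statement you were handed is precisely Conjecture~\ref{conj:urysohn}, which remains open: the paper proves it only for $n=3$ (attributing this to Liokumovich--Maximo, with a ``slice and dice'' alternative sketched for the simply connected case), and states flatly that it is ``widely open in higher dimensions,'' with the only higher-dimensional result given being the weaker statement about universal covers under $\pi_2=0$ (and $\pi_3=0$) in dimensions~$4$ and~$5$. Your plan closely mirrors the $\mu$-bubble strategy the paper sketches, and you correctly identify two genuine obstructions --- the propagation of the spectral constants through the descent and the regularity of $\mu$-bubbles for $n\geq 8$ --- but there is a third, equally fundamental gap that you do not address and that the paper flags explicitly.

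Your inductive step carries out only the ``dice'' half of ``slice and dice.'' You cut $M$ into bands $S_k$ by a distance function and place $\mu$-bubbles $\Sigma_k$ inside them, and then you propose a gradient-flow interpolation to assemble the target complex. But if $M$ is not simply connected, a component of a tile $T_k$ between $\Sigma_{k-1}$ and $\Sigma_k$ can meet several components of those bubbles, and then the $L$-proximity to the bubbles no longer controls its diameter --- the component can wander through topology that joins two far-apart bubble components. Your claim that ``a generic fiber would be contained in an $f$-band of width $O(L)$'' is where this surfaces: the band constraint controls $f$, not the full geometry of the tile component, so the fiber diameter bound does not follow. The paper's approach handles this by first cutting along area-minimizing representatives of a generating set for $H_2$ (the ``slice'') before running the $\mu$-bubble step with free boundary (the ``dice''), precisely to guarantee that each tile component meets a single bubble component. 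Without the slice step, the assembly fails already for non-simply-connected $3$-manifolds, and in higher dimensions one expects the analogous issue to be worse since one would need to kill higher homology classes as well. If you want to pursue this plan, you need to build the slice step into your induction (cutting along appropriate minimizers before dicing), and you should also note that the theorem you cite as the base case, the Schoen--Yau spectral diameter estimate, is genuinely sensitive to the coefficient $\alpha<4$; the constant you feed into $n=2$ after several descents through $\mu$-bubble second variations is not automatically below this threshold, which is exactly your first obstruction restated.
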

Alternatively, one may ask if the universal cover $(\tilde M,\tilde g)$ has finite $(n-2)$-Urysohn width. 

Gromov and Lawson used the inductive descent technique applied to minimal disks to prove that a simply connected complete Riemannian $3$-manifold $(M^3,g)$ with scalar curvature $R \geq 1$ has finite $1$-Urysohn width. More recently, Conjecture \ref{conj:urysohn} has been proven by  Liokumovich and Maximo \cite{LiokumovichMaximo2020waist} in a strong form. An alternative proof may be given using the ``slice and dice'' argument from \cite{ChodoshLi2020generalized}. For simplicity, we assume $M$ is simply connected\footnote{A proof using slice and dice for non-simply connected manifolds is given in  \cite[Theorem 2.2.5]{Balitskiy}.}   to give an idea of the ``slice and dice'' argument:
\begin{proof}[Sketch of a proof of Conjecture \ref{conj:urysohn} for simply connected $3$-manifolds]
Let $(M,g)$ be complete and simply connected with $R\geq 1$. We start with a tiny ball $B_\varepsilon(p)$ and consider the band $W_1 : = B_{\varepsilon+L}(p)\setminus B_\varepsilon(p)$. Theorem \ref{theo:mu-bubble-annulus} allows us to find a $\mu$-bubble $\Sigma_1$ separating $W_1$.  Each component of $\Sigma_1 = \partial K_1$ has uniformly bounded diameter. We can repeat this argument to find a $\mu$-bubble $\Sigma_2 = \partial(K_1\cup K_2)$ \emph{outside} of $\Sigma_1$ at a distance $\leq L$. We continue this process (possibly infinitely many times) until we have exhausted $(M,g)$

By simple connectivity, each component $K_{i+1}'$ of $K_{i+1}$ intersects exactly one component of $\Sigma_i$. Each point in $K_{i+1}$ is a distance $\leq L$ from $\Sigma_i$ (by construction), so the diameter of $K_{i+1}'$ is bounded. This shows that  $(M,g)$ coarsely resembles a tree with a vertex at each component of $K_i$ with edges along  components of $\Sigma_i$. 
\end{proof}
When $(M,g)$ is not simply connected, the previous argument might have difficulty when $K_{i+1}'$ intersects multiple components of $\Sigma_i$. In this case, all we know is that $p\in K_{i+1}$ has distance $\leq L$ from $\Sigma_i$ so we cannot conclude that $K_{i+1}'$ has bounded diameter. Instead, in \cite{ChodoshLi2020generalized} one obtains an appropriate decomposition of $(M,g)$ by first cutting along a generating set of $H_2(M;\ZZ)$ by area-minimizing surfaces (this is the ``slice''). Then, one uses free boundary $\mu$-bubbles (the ``dice'') as above. This decomposes $(M,g)$ into regions of bounded diameter and overlap.

Conjecture \ref{conj:urysohn} is widely open in higher dimensions. Partial results are known, particularly for certain $\pi = \pi_1(M)$ (related to results on the strong Novikov conjecture) by works of  Bolotov and Dranishnikov \cite{Bolotov,BolotovDranishnikov}. With Li and Liokumovich we  generalized techniques from \cite{ChodoshLi2020generalized,ABG} to prove a variant of Conjecture \ref{conj:urysohn}:
\begin{theorem}[{\cite[Theorem 4]{CLL:suff.conn.psc}}]
For $n \in\{4,5\}$, suppose that $(M,g)$ is a closed $n$-dimensional Riemannian manifold with positive scalar curvature. If $n=4$ assume that $\pi_2(M) = 0$ and if $n=5$ assume that $\pi_2(M) = \pi_3(M) = 0$. Then the universal cover $(\tilde M,\tilde g)$ has finite $1$-Urysohn width. 
\end{theorem}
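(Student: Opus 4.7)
The plan is to apply the slice-and-dice $\mu$-bubble construction from the sketched proof of Conjecture \ref{conj:urysohn} to the universal cover $(\tilde M, \tilde g)$, then iterate a dimensional reduction one (for $n=4$) or two (for $n=5$) times, using the higher connectivity hypotheses to propagate topological control through each $\mu$-bubble slice. After rescaling we may assume $R \geq 1$ on $M$, hence on $\tilde M$. Because $\tilde M$ is simply connected and $\pi_k(\tilde M) = \pi_k(M)$ for $k \geq 2$, the hypotheses give that $\tilde M$ is $(n-2)$-connected.

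Fix a basepoint $p \in \tilde M$ and apply Theorem \ref{theo:mu-bubble-annulus} to nested annular bands of thickness $L = L(n)$ to produce compact regions $K_1 \subset K_2 \subset \cdots$ exhausting $\tilde M$, with $\mu$-bubble boundaries $\Sigma_i = \partial K_i$, each of which carries the spectrally positive operator $-2\Delta_{\Sigma_i} + R_{\Sigma_i}$. Simple connectivity of $\tilde M$ imposes the forest-like incidence structure on the components of the $K_i$ exactly as in the sketched simply-connected three-dimensional case. It therefore suffices to bound the $1$-Urysohn width of each component of each $\Sigma_i$ by a universal constant: one can then map $\tilde M$ to a $1$-complex whose vertices correspond to components of the $\Sigma_i$ and whose edges correspond to pieces of the annuli $K_{i+1} \setminus K_i$, with preimages of uniformly bounded diameter.

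For $n = 4$ each such component is a closed three-manifold with spectrally positive scalar curvature; passing to its universal cover and applying the simply-connected three-dimensional case of Conjecture \ref{conj:urysohn} (equivalently the slice-and-dice argument sketched above) gives the required uniform bound, which descends to $\Sigma_i$ itself. For $n = 5$ each component is a closed four-manifold with spectrally positive scalar curvature, and one would like to apply the $n=4$ case just established. This requires showing that its universal cover satisfies $\pi_2 = 0$. The hypothesis $\pi_3(M) = 0$ should enter here: combined with $\pi_2(\tilde M) = 0$ and the separating property of $\Sigma_i$ (built into the $\mu$-bubble construction), the long exact sequence of the pair $(\tilde M, \Sigma_i)$ together with a Mayer--Vietoris argument across the decomposition $\tilde M = K_i \cup_{\Sigma_i} \overline{(\tilde M \setminus K_i)}$ should yield the vanishing of $\pi_2$ of each component's universal cover.

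I expect the main obstacle to lie in this dimension-five topological step. Propagating $\pi_2 = 0$ from $\tilde M$ down to the four-dimensional $\mu$-bubbles is delicate because the $\mu$-bubble construction is not homotopy-theoretically natural: one must ensure that components of $\Sigma_i$ individually separate (not merely collectively) and that the universal cover of such a component embeds equivariantly into a side of the decomposition so that the homological input actually controls $\pi_2$. Overcoming this likely requires either a more refined choice of $\mu$-bubble, or a slice-and-dice that also uses codimension-two linking data analogous to the dimensional reduction alluded to for Theorem \ref{theo:asph45}.
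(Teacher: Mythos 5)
Your overall scaffold --- rescale to $R \geq 1$, work on $\tilde M$, build nested annular $\mu$-bubbles via Theorem~\ref{theo:mu-bubble-annulus}, and use higher connectivity to inductively control the $\mu$-bubble slices --- is reasonable in spirit, but the central reduction is flawed. You claim that it suffices to bound the $1$-Urysohn width of each component of each $\Sigma_i$ by a universal constant, and that this produces a map from $\tilde M$ to a $1$-complex whose vertices are $\Sigma_i$-components with preimages of bounded diameter. This does not work: a closed $3$-manifold (or $4$-manifold) with finite $1$-Urysohn width can have arbitrarily large diameter (think of a long, thin $S^2 \times [0,T]$), so the preimage of a single vertex in your proposed $1$-complex --- an annular neighborhood of an entire $\Sigma_i$-component --- need not have bounded diameter. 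If instead one tries to genuinely compose the two Urysohn decompositions (the radial ``tree'' of annuli and the $1$-complex structure on each $\Sigma_i$), the target becomes a $2$-dimensional complex, which for $n = 4$ only recovers the $(n-2)$-Urysohn width bound of Conjecture~\ref{conj:urysohn} and misses the point of the theorem entirely. The hypotheses $\pi_2(M)=0$ (and $\pi_3(M)=0$ for $n=5$) must be used to actually bound the \emph{diameter} of the pieces in the decomposition of $\tilde M$ --- e.g.\ by exploiting that essential $2$-spheres in the $\mu$-bubble slices bound in $\tilde M$ so that the slices can be ``filled'' or ``capped off,'' rather than merely deferring to an Urysohn bound for the slices themselves. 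Your reduction does not see this.

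A secondary gap: for $n=4$ you propose to pass to the universal cover of each $\Sigma_i$-component and invoke the simply connected $3$-dimensional case. Urysohn width bounds do not automatically descend along covering maps, since the map to a $1$-complex on the cover is not a priori equivariant. (The non-simply-connected $3$-dimensional case is in fact known --- the paper cites \cite{Balitskiy} and \cite{LiokumovichMaximo2020waist} --- so you should invoke those directly rather than attempt a descent.) You do correctly flag the $n=5$ topological step as unresolved in your own plan, and indeed the propagation of $\pi_2$-triviality to the $4$-dimensional $\mu$-bubbles requires care; but the issue above already breaks the argument at $n=4$, before one reaches the $n=5$ difficulty you identify.
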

As $T^2 \times S^2$ has $R\geq 1$ but the universal cover $\RR^2\times S^2$ has infinite $1$-Urysohn width, the condition on the higher homotopy groups cannot be removed. Combining this argument with arguments from geometric group theory and topology we obtained the following partial classification result:
\begin{corollary}[{\cite[Theorem 1]{CLL:suff.conn.psc}}]\label{coro:suff-conn}
For $n \in\{4,5\}$, suppose that $(M,g)$ is a closed $n$-dimensional Riemannian manifold with positive scalar curvature. If $n=4$ assume that $\pi_2(M) = 0$ and if $n=5$ assume that $\pi_2(M) = \pi_3(M) = 0$.Then some finite cover $\hat M$ is homotopy equivalent to $S^n$ or a connected sum of $S^{n-1}\times S^1$ factors. 
\end{corollary}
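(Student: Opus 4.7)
The plan is to convert the Urysohn-width bound of the preceding theorem into an algebraic restriction on $\pi_1(M)$, and then identify the homotopy type of a suitable finite cover.

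First, I would apply the preceding theorem to conclude that the universal cover $(\tilde M,\tilde g)$ has finite $1$-Urysohn width. Since $\Gamma := \pi_1(M)$ acts properly, cocompactly, and isometrically on $\tilde M$ by deck transformations, its Cayley graph is quasi-isometric to $\tilde M$ and therefore also coarsely one-dimensional. A finitely generated group that is coarsely one-dimensional in this sense has asymptotic dimension at most $1$, and by the asymptotic-dimension-one characterization of virtually free groups (Gentimis; Fujiwara--Whyte, building on Stallings' theorem on ends and Dunwoody's accessibility), $\Gamma$ is virtually free. Choosing a free finite-index subgroup $F_k \leq \Gamma$ of rank $k \geq 0$ yields a finite cover $\hat M \to M$ with $\pi_1(\hat M) \cong F_k$.

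Second, I would show that $\hat M$ is homotopy equivalent to the model space $N_k$, where $N_0 := S^n$ and $N_k := \#_k(S^{n-1}\times S^1)$ for $k \geq 1$. Since finite covers preserve $\pi_i$ for $i \geq 2$, the hypotheses give $\pi_i(\hat M) = 0$ for $2 \leq i \leq n-2$, which is exactly the range where $N_k$ is also highly connected. Both $\hat M$ and $N_k$ are closed orientable $n$-manifolds whose Postnikov tower, through dimension $n-1$, agrees with that of $K(F_k,1) \simeq \bigvee_k S^1$. Using obstruction theory, one constructs a map $f: N_k \to \hat M$ inducing an isomorphism on $\pi_1$ and, after a suitable normalization, of degree one. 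Lifting to universal covers and applying Hurewicz together with Poincar\'e--Lefschetz duality on the noncompact manifold $\tilde{\hat M}$, one checks that $\tilde f$ induces isomorphisms on all homology groups, and Whitehead's theorem upgrades $f$ to a homotopy equivalence.

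The main obstacle is the first step: the passage from finite $1$-Urysohn width of the $n$-dimensional space $\tilde M$ to virtual freeness of $\Gamma$. Transferring a topological width bound on $\tilde M$ (a continuous map to a $1$-complex with bounded fibers) to an asymptotic-dimension bound on $\Gamma$ is subtle, since the Urysohn map need not be a quasi-isometry, and the subsequent invocation of the structure theory of virtually free groups relies on deep results in geometric group theory. By contrast, once $\pi_1(\hat M)$ has been pinned down as a free group, the homotopy classification reduces to a standard application of obstruction theory, Hurewicz, and Whitehead's theorem against the explicit model $N_k$.
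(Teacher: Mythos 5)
Your two-step strategy---first pin down $\pi_1$ as virtually free, then nail the homotopy type of a free cover via Poincar\'e duality and obstruction theory---matches the broad shape of what the cited paper does (it says the corollary follows from the Urysohn width theorem ``combined with arguments from geometric group theory and topology''), and you correctly flag the first step as the crux. However, the specific route you propose through that first step has a genuine gap, and I don't think it can be repaired without importing more of the $\mu$-bubble machinery than the bare Urysohn width statement.

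The inference ``finite $1$-Urysohn width $\Rightarrow$ $\tilde M$ coarsely one-dimensional $\Rightarrow$ $\operatorname{asdim}(\Gamma)\le 1$'' does not go through as stated. Finite $1$-Urysohn width is a \emph{single-scale} condition: it gives one continuous map $f\colon \tilde M\to Y$ to a $1$-complex with $\diam f^{-1}(y)\le W$. Asymptotic dimension $\le 1$ is a \emph{multi-scale} condition: for every $R$ one needs an $R$-disjoint, uniformly bounded, multiplicity-$2$ cover. These are not a priori equivalent (one is essentially Gromov's macroscopic dimension, the other asymptotic dimension), and the natural attempt to pass from the first to the second fails: the fibers $f^{-1}(y)$ are bounded, but the preimage $f^{-1}(e)$ of an edge $e\subset Y$ need not be bounded, since $f$ carries no quantitative Lipschitz or properness control. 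So one cannot simply subdivide $Y$ and pull back to get the required covers. Similarly, the claim that macroscopic dimension is preserved under quasi-isometry (so that the Cayley graph inherits it) is itself not free, because the Urysohn map is continuous but not coarse in any controlled sense. What actually does the work in the cited argument is the explicit structure produced by the slice-and-dice/$\mu$-bubble decomposition: one obtains a partition of $\tilde M$ into uniformly bounded pieces whose nerve is a tree-like graph with controlled combinatorics, and \emph{this} additional information (not the raw width bound) is what feeds into the geometric group theory (Stallings/Dunwoody or a quasi-tree argument \`a la Fujiwara--Whyte) to conclude virtual freeness. In other words, the width bound alone is a shadow of the structure that is really being used.

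Two smaller points on the second half. First, the statement that the Postnikov tower of $\hat M$ ``agrees with that of $K(F_k,1)$ through dimension $n-1$'' overshoots: the hypotheses give $\pi_i=0$ only for $2\le i\le n-2$, and $\pi_{n-1}(\hat M)$ is not controlled (for $n=4$ this is $\pi_3$, which can be nonzero). Second, building the comparison map $N_k\to\hat M$ and proving it is a homology isomorphism on universal covers requires some care about $H_{n-1}(\tilde{\hat M})$, which is governed by the ends of $\tilde{\hat M}$ via compactly supported Poincar\'e duality; it is not automatic from the connectivity assumptions alone. These are fixable, but the argument as written elides them. The essential missing idea, though, is the one above: you need the tree-like decomposition from the $\mu$-bubble construction, not merely the numerical width bound, to get virtual freeness.
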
 

\section{Non-compact manifolds with positive scalar curvature.} 
The $\mu$-bubble technique has been particularly effective in the study of complete\footnote{Note that any non-compact manifold admits an incomplete metric of positive scalar curvature so we must assume completeness here (cf.\ \cite{Rosenberg:PSC.progress}). } Riemannian manifolds with uniformly positive scalar curvature $R\geq 1$. In particular, we have the following result (the proof is like the $n=3$ proof of Conjecture \ref{conj:urysohn} above):
\begin{theorem}\label{theo:exhaustion}
For $n \leq 7$, if $(M^n,g)$ is a complete non-compact Riemannian manifold with scalar curvature $R\geq 1$ then there's an exhaustion $\Omega_1\subset \Omega_2\subset \Omega_3\subset \dots$ by compact sets so that each $\partial\Omega_i$ is smooth and has positive scalar curvature in the spectral sense. 
\end{theorem}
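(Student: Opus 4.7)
The plan is to iterate the annular $\mu$-bubble construction of Theorem \ref{theo:mu-bubble-annulus} on thin annuli marching out to infinity from a fixed basepoint $p \in M$. Let $L = L(n,1)$ be the distance constant provided by that theorem with $R_0 = 1$. Because the distance function $\rho := d(p,\cdot)$ is only Lipschitz, I first replace it by a proper smooth function $f : M \to [0,\infty)$ with $|f - \rho| \leq 1$, obtained by a standard smoothing (e.g.\ convolution with a partition of unity). By Sard's theorem almost every value of $f$ is regular.

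Next I choose regular values $a_1 < b_1 < a_2 < b_2 < \cdots$ of $f$ with $a_i \to \infty$, $b_i - a_i > L + 2$, and $a_{i+1} > b_i$, and set $W_i := f^{-1}([a_i, b_i])$. This is a compact Riemannian manifold with boundary, partitioned into $\partial_- W_i := f^{-1}(a_i)$ and $\partial_+ W_i := f^{-1}(b_i)$. Any path in $M$ connecting these two level sets changes $\rho$ by at least $b_i - a_i - 2 \geq L$, so the distance hypothesis of Theorem \ref{theo:mu-bubble-annulus} is met, and that theorem supplies a separating hypersurface $\Sigma_i \subset W_i$ on which $-2\Delta_{\Sigma_i} + R_{\Sigma_i}$ is positive. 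Letting $V_i$ denote the closure of the component of $W_i \setminus \Sigma_i$ containing $\partial_- W_i$, define
\[
\Omega_i := f^{-1}([0, a_i]) \cup V_i.
\]
Both pieces are compact and glue along the interior hypersurface $\partial_- W_i = f^{-1}(a_i)$, so $\Omega_i$ is compact with boundary $\partial \Omega_i = \Sigma_i$. The choice $a_{i+1} > b_i$ gives $\Omega_i \subset f^{-1}([0, b_i]) \subset f^{-1}([0, a_{i+1}]) \subset \Omega_{i+1}$, and properness of $f$ forces $\bigcup_i \Omega_i = M$.

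The substantive content is entirely packaged into Theorem \ref{theo:mu-bubble-annulus}; the rest is bookkeeping. The one mildly delicate point is that the $\mu$-bubble $\Sigma_i$ should be genuinely smooth rather than only almost so, but this holds because $\Sigma_i$ has dimension $n - 1 \leq 6$ and minimizers of the prescribed-mean-curvature functional defining the $\mu$-bubble enjoy the same codimension-$7$ regularity as area-minimizers; this is the same dimensional restriction as in Theorem \ref{theo:minimize}. No further obstruction arises.
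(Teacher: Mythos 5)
Your proposal is correct and follows essentially the approach the paper points to (iterating the annular $\mu$-bubble construction of Theorem \ref{theo:mu-bubble-annulus} on bands marching out to infinity, as in the sketched proof of Conjecture \ref{conj:urysohn} for simply connected $3$-manifolds). Your use of a smoothed, proper exhaustion function with regular values is a cleaner way to produce the compact bands with smooth boundary than working directly with metric annuli; the only small imprecision is that $V_i$ should be taken to be the minimizing region from the $\mu$-bubble construction itself (equivalently the union of all components of $W_i \setminus \Sigma_i$ meeting $\partial_- W_i$), since $\partial_- W_i$ need not sit in a single component.
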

Thus, classification results for closed manifolds admitting positive scalar curvature can be used to control the topology of $\partial\Omega_i$. For example, J.\ Wang has used this to prove the following:
\begin{theorem}[{\cite[Theorem 1.2]{wang:strictPSC}}]
Suppose that $(M^3,g)$ is an oriented, complete Riemaniann manifold with $R\geq 1$. Then $M$ is a possibly infinite connected sum of spherical $3$-manifolds and $S^1\times S^2$ factors. 
\end{theorem}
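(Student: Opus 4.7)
The plan is to combine the scalar-curvature exhaustion of Theorem \ref{theo:exhaustion} with the closed classification in Theorem \ref{theo:psc-3d}. The compact case is handled directly by Theorem \ref{theo:psc-3d}, so I would assume $M$ is non-compact. First I would apply Theorem \ref{theo:exhaustion} to produce a nested compact exhaustion $\Omega_1\subset \Omega_2\subset \cdots$ of $M$ whose boundaries $\partial\Omega_i$ are smooth closed surfaces carrying spectrally positive scalar curvature. On a $2$-manifold one has $R_{\partial\Omega_i}=2K$, so the spectral condition reads $-\Delta+K>0$ as an operator; testing with $\varphi\equiv 1$ on any component $\Sigma'\subset\partial\Omega_i$ and applying Gauss--Bonnet gives $0<\int_{\Sigma'} K = 2\pi\chi(\Sigma')$, so every component of $\partial\Omega_i$ is a $2$-sphere.

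Next I would cap off each boundary $2$-sphere of $\Omega_i$ by a $3$-ball to form a closed oriented $3$-manifold $\hat\Omega_i$ in which $\Omega_i$ sits as the complement of finitely many disjoint open balls. The key step is to upgrade the interior PSC metric on $\Omega_i$ to a genuine PSC metric on $\hat\Omega_i$. I would do this via a Gromov--Lawson-style surgery: first deform the metric in a collar neighborhood of each boundary $2$-sphere to a long, thin, nearly round cylindrical neck while preserving $R>0$, then glue on a rotationally symmetric ``torpedo'' metric on $B^3$ whose scalar curvature is strictly positive provided the neck is sufficiently thin. (An alternative is to double $\Omega_i$ along its boundary and smooth the corner in the style of Miao.) Once $\hat\Omega_i$ carries a PSC metric, Theorem \ref{theo:psc-3d} identifies it as a finite connected sum
\[
\hat\Omega_i \;\cong\; S^3/\Gamma^{(i)}_1 \#\cdots\# S^3/\Gamma^{(i)}_{a_i}\# b_i(S^2\times S^1),
\]
so $\Omega_i$ itself realises such a finite connected sum with finitely many open $3$-balls removed.

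Finally I would pass to the direct limit. The shells $\overline{\Omega_{i+1}\setminus\Omega_i}$ are compact $3$-manifolds whose only boundary components are $2$-spheres, and the same capping-plus-classification argument applied to each shell (capped on both sides) again yields only spherical and $S^2\times S^1$ summands. Because every separating $2$-sphere in $\partial\Omega_i\subset\hat\Omega_{i+1}$ induces an honest connected-sum decomposition, the Kneser--Milnor factorisations of the $\hat\Omega_i$ are mutually compatible; uniqueness of prime decomposition then shows that the list of summands stabilises on each compact region of $M$, and the direct limit exhibits $M$ itself as a possibly infinite connected sum of spherical $3$-manifolds and $S^2\times S^1$ factors.

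The hardest step will be the PSC extension from $\Omega_i$ to $\hat\Omega_i$: one must verify that the $\mu$-bubble boundaries supplied by Theorem \ref{theo:exhaustion} are regular and geometrically controlled enough for the Gromov--Lawson neck-and-torpedo construction (or the doubling-and-smoothing alternative) to produce a smooth metric with $R>0$ throughout $\hat\Omega_i$. A secondary technical point is making the infinite connected sum precise as a $3$-manifold; this reduces to the uniqueness part of the Kneser--Milnor theorem together with the nestedness of the exhaustion, so each compact subset of $M$ sees an eventually constant finite factorisation.
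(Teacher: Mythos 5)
Your overall strategy---exhausting by the $\mu$-bubble regions of Theorem \ref{theo:exhaustion}, recognizing each boundary component as a sphere via Gauss--Bonnet and orientability, capping to closed $3$-manifolds, classifying them by Theorem \ref{theo:psc-3d}, and passing to the direct limit---is precisely the route the survey indicates. The paper itself does not give a proof, only the one-line remark that the exhaustion plus the closed classification controls the topology, so your proposal is the natural unpacking of that remark.

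That said, the step you flag as hardest does contain a genuine gap, and it is more specific than a regularity concern. Both of your proposed capping methods implicitly require the boundary spheres $\partial\Omega_i$ to be mean-convex with respect to the outward normal of $\Omega_i$, and this is not supplied by Theorem \ref{theo:exhaustion}. A $\mu$-bubble boundary satisfies $H=h$ for the prescribing function $h$, and in the exhaustion construction $h$ passes from $+\infty$ at the inner edge of each band to $-\infty$ at the outer edge, so the $\mu$-bubble may well land where $h<0$; there is no a priori control on $\mathrm{sign}(H)$. The Shi--Tam/Miao corner-smoothing you invoke for the doubling option needs $H_1+H_2\geq 0$ at the interface, which for doubling $\Omega_i$ along its boundary reads $2H\geq 0$. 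The Gromov--Lawson neck-and-torpedo you invoke is built for codimension-$\geq 3$ surgery inside a closed PSC manifold, where the intrinsic curvature of the normal sphere $S^{n-k-1}$ compensates the bending terms; here the relevant ``normal sphere'' to a boundary $2$-sphere in a $3$-manifold is $S^0$, so that mechanism does not transfer, and one would still need the boundary to already be a small round convex sphere to run the torpedo normalization. A standard repair is to post-process the $\mu$-bubble: minimize area in the band using the $\mu$-bubble as a barrier to produce an outer area-minimizing sphere, which has $H\geq 0$ in the weak sense and still encloses a region with $R\geq 1$; then the Miao doubling gives $R\geq 0$ distributionally, and the strict interior bound $R\geq 1$ lets one perturb to a smooth metric with $R>0$. (Alternatively a conformal bend near $\partial\Omega_i$ can force mean convexity at a small scalar-curvature cost, again affordable because $R\geq 1$ strictly.) With the mean-convexity supplied, the remainder of your argument, including the Kneser--Milnor compatibility in the direct limit, goes through.
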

This was previously obtained via Ricci flow in \cite{BBM:open-psc} with an additional assumption that $g$ has bounded geometry (bounded curvature and injectivity radius). In higher dimensions such a classification is not known, but with Maximo and Mukherjee we obtained:
\begin{theorem}[{\cite[Theorem 1.2]{CMM}}]\label{thm:4d-contr}
For $W$ a compact contractible smooth four-manifold with boundary, let $M$ be the interior of $W$. If $M$ admits a complete Riemannian metric with scalar curvature $R\geq 1$ then $W$ is homeomorphic to the four-ball. 
\end{theorem}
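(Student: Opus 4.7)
My plan is to reduce the assertion to $\pi_1(\partial W)=1$ and then to extract this from an exhaustion of $M$ by positive scalar curvature hypersurfaces via Theorem~\ref{theo:exhaustion} and Theorem~\ref{theo:psc-3d}.

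\emph{Topological reduction.} Since $W$ is compact and contractible, Poincar\'{e}--Lefschetz duality for the pair $(W,\partial W)$ forces $\partial W$ to be an integral homology $3$-sphere. By Freedman's classification of compact contractible topological $4$-manifolds---any two with homeomorphic boundaries are themselves homeomorphic, and $\partial B^4\cong S^3$---combined with Perelman's resolution of the $3$-dimensional Poincar\'{e} conjecture, it therefore suffices to prove $\pi_1(\partial W)=1$.

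\emph{PSC hypersurfaces close to $\partial W$.} Apply Theorem~\ref{theo:exhaustion} to $(M,g)$ to obtain an exhaustion $\Omega_1\subset\Omega_2\subset\cdots$ by compact smooth codimension-zero submanifolds with each boundary $\partial\Omega_i$ spectrally PSC. In dimension three this upgrades to pointwise PSC via the standard conformal change by the first eigenfunction, so by Theorem~\ref{theo:psc-3d} each component of $\partial\Omega_i$ is diffeomorphic to a connected sum of spherical space forms $S^3/\Gamma$ and copies of $S^2\times S^1$. For $i$ large, compactness of $W$ confines $\partial\Omega_i$ to an arbitrarily thin collar $\partial W\times[0,\varepsilon_i)$, so that $V_i := W\setminus\mathrm{int}(\Omega_i)$ is a compact $4$-dimensional cobordism from $\partial W$ to $\partial\Omega_i$ contained in this collar. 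Composition of the inclusion with the collar projection yields a retraction $r\colon V_i\to\partial W$ with $r|_{\partial W}=\mathrm{id}$, and comparing fundamental classes in $H_3(V_i)$ shows $r_*[\partial\Omega_i]=[\partial W]$. Thus $r|_{\partial\Omega_i}$ has total degree $1$ onto $\partial W$, so some component $T_i\subset\partial\Omega_i$ maps to $\partial W$ with nonzero degree, and in particular $\pi_1(T_i)\to\pi_1(\partial W)$ has image of finite index.

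\emph{Main obstacle.} The crux is then to deduce $\pi_1(\partial W)=1$ from the existence, for all large $i$, of a component $T_i$ of $\partial\Omega_i$---a connect sum of spherical space forms and $S^2\times S^1$ factors---mapping to $\partial W$ with nonzero degree. My plan is: first, use classical $3$-manifold topology (Kneser--Milnor prime decomposition, degree-one domination) together with connect-sum stability of PSC in dimension three to show that $\partial W$ itself admits a PSC metric; combined with the homology-sphere condition this forces $\partial W$ to be a connect sum of Poincar\'{e} homology spheres and copies of $S^3$. Second, use that $\partial W$ smoothly bounds the contractible (hence signature-zero, spin) manifold $W$: the Rokhlin and Casson invariants of $\partial W$ then vanish, and a refined Heegaard Floer $d$-invariant argument---possibly strengthened by a further $\mu$-bubble iteration inside the collar that strips off PSC necks one at a time---rules out any nontrivial Poincar\'{e} summand. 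This forces $\partial W\cong S^3$, and Freedman's theorem then yields $W\cong_{\mathrm{top}}B^4$. I expect the degree-one/connect-sum step---rigorously descending from $T_i$ admitting PSC to $\partial W$ admitting PSC, and then matching the smooth $4$-dimensional invariants---to be the main technical difficulty.
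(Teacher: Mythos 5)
Your strategy matches the paper's almost exactly: exhaustion from Theorem~\ref{theo:exhaustion}, a nonzero-degree map $\partial\Omega_i\to\partial W$, a ``mapping version'' of Theorem~\ref{theo:psc-3d} (this is precisely \cite[Proposition 2.2]{CMM}, and your Kneser--Milnor + degree-domination sketch is the right idea), Lefschetz duality to force $\partial W$ to be a homology sphere with no $S^2\times S^1$ factors, reduction to a connected sum of Poincar\'e spheres, a Floer/gauge-theoretic obstruction, and then Freedman. Reordering so the Freedman reduction comes first is cosmetic.

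The genuine gap is in the last step. You propose Rokhlin, Casson, and the Heegaard Floer $d$-invariant to rule out nontrivial Poincar\'e summands, but none of these sees the mixed-orientation case: $\Sigma(2,3,5)\#\overline{\Sigma(2,3,5)}$ has vanishing Rokhlin invariant, vanishing Casson invariant, and vanishing $d$ (since $d$ is additive under connected sum and sign-reverses under orientation reversal), yet it is exactly the kind of boundary you need to exclude. The paper's proof closes this by invoking Taubes' end-periodic gauge theory \cite{Taubes} in combination with the $d$-invariant; that is the missing ingredient in your sketch, and the ``further $\mu$-bubble iteration inside the collar'' you gesture at is not a substitute --- once you are down to a homology sphere that is a sum of $\pm\Sigma(2,3,5)$'s, the obstruction is purely $4$-dimensional topology, not another round of minimal-surface cutting.
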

A version of this for sufficiently connected, five-dimensional contractible manifolds was recently obtained by Sweeney \cite{Sweeney}. We also note that  C.\ Li and B.\ Zhang have used tools from topology to obtain interesting results on stable minimal hypersurfaces in certain four-manifolds \cite{LiZhang:topMin}.

\begin{proof}[Sketch of the proof]
Let $\Omega_1\subset \Omega_2\subset \dots$ be the exhaustion from Theorem \ref{theo:exhaustion}. For $i$ large, there's a non-zero degree map $\partial\Omega_i \to \partial W$. One may generalize Theorem \ref{theo:psc-3d} to a mapping version (cf.\ \cite[Proposition 2.2]{CMM}) and use this to conclude $W$ is a connected sum of spherical summands and $S^2\times S^1$ factors. Since $W$ is contractible, Lefschetz duality shows that $\partial W$ is a homology sphere $H_*(\partial W,\ZZ) = H_*(S^3;\ZZ)$. Thus, there are no $S^2\times S^1$ factors. Furthermore, by the study of the fundamental groups of spherical space forms, we find that $\partial W$ is either $S^3$ or else a connected sums of Poincar\'e homology spheres. 

We now combine several deep results in topology. By combining the Heegard Floer $d$-invariant \cite{OS:d} and work of Taubes \cite{Taubes} we find that $\partial W = S^3$. The assertion then follows from the work of Freedman \cite{Freedman}. 
\end{proof}

Using an argument similar to Theorem \ref{thm:4d-contr} and an invariant called end Floer homology, we also proved that scalar curvature can detect certain smooth structures:
\begin{theorem}[{\cite[Theorem 1.3]{CMM}}]
There exist smooth structures on $\bR^4$ that do not admit complete Riemannian metrics with scalar curvature $R\geq 1$. 
\end{theorem}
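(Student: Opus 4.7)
The plan is to mirror the proof of Theorem \ref{thm:4d-contr}, but now the topological input is replaced by an invariant of the smooth structure at infinity. Suppose, for contradiction, that an exotic smooth $\bR^4$, call it $M$, admits a complete Riemannian metric with scalar curvature $R\geq 1$. Applying Theorem \ref{theo:exhaustion} produces a compact exhaustion $\Omega_1 \subset \Omega_2 \subset \cdots$ of $M$ such that each $\partial\Omega_i$ is a closed smooth $3$-manifold whose induced metric has spectrally positive scalar curvature.

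Next I would apply a spectral version of the classification behind Theorem \ref{theo:psc-3d}, combined with the mapping principle used in the proof of Theorem \ref{thm:4d-contr} (cf.\ \cite[Proposition 2.2]{CMM}), to conclude that each $\partial\Omega_i$ is, up to homeomorphism, a finite connected sum of spherical space forms $S^3/\Gamma$ and copies of $S^2\times S^1$. In particular, as in the proof of Theorem \ref{thm:4d-contr}, the Heegaard Floer $d$-invariants of the $\partial\Omega_i$ vanish. I would then feed this exhaustion into the definition of end Floer homology (the Heegaard Floer invariant of the end of an open $4$-manifold, computed as an appropriate limit of the Floer homologies of the $\partial\Omega_i$ together with the cobordism maps induced by $\overline{\Omega_{i+1}\setminus\Omega_i}$). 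Since each $\partial\Omega_i$ is a ``standard'' PSC $3$-manifold, the resulting end Floer invariant of $M$ should coincide with that of the standard $\bR^4$.

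To conclude, I would invoke the existence of exotic smooth structures on $\bR^4$ whose end Floer homology \emph{disagrees} with that of standard $\bR^4$; such smooth structures arise, for instance, by removing a smoothly embedded, topologically contractible compact $4$-manifold whose boundary is a nontrivial integer homology sphere (e.g.\ a Brieskorn sphere with nonzero Fr\o yshov invariant) from a closed $4$-manifold and keeping the complementary end. For such an $M$, the two conclusions above are incompatible, yielding the theorem. The main obstacle is the middle step: rigorously passing from the spectral PSC condition on each $\partial\Omega_i$ to control of the end Floer homology of $M$. Concretely, one must verify a mapping form of the dimension-three PSC classification strong enough to force vanishing (or standardness) of the $d$-invariants of $\partial\Omega_i$, and then control the cobordism maps between consecutive levels to ensure that the Floer invariants of $M$ assemble into those of the standard end rather than something more exotic.
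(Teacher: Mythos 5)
The paper does not give a proof of this statement; it simply records that the argument is ``similar to Theorem~\ref{thm:4d-contr} and an invariant called end Floer homology'' and cites \cite{CMM}. Your scaffolding matches that description: apply Theorem~\ref{theo:exhaustion} to produce an exhaustion with spectrally PSC boundaries, invoke the mapping version of the $3$-manifold PSC classification to constrain the topology of $\partial\Omega_i$, and then argue that this constrains the end Floer homology, contradicting a choice of exotic $\bR^4$ with nonstandard end invariant. That is the shape of the actual argument, so at the level of strategy you are on track.

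Two places where your sketch is either wrong or significantly underdetermined. First, the claim ``the Heegaard Floer $d$-invariants of the $\partial\Omega_i$ vanish'' does not follow from $\partial\Omega_i$ being a connected sum of spherical space forms and $S^2\times S^1$ factors: the Poincar\'e homology sphere is a spherical space form with nonzero $d$-invariant, and in the proof of Theorem~\ref{thm:4d-contr} the role of the $d$-invariant (together with Taubes) is precisely to \emph{rule out} Poincar\'e summands for $\partial W$, not to observe that $d$ already vanishes. Moreover, in your setting $\partial\Omega_i$ need not be a homology sphere at all (the $S^2\times S^1$ factors are not yet excluded), so some preliminary topology is needed before $d$-invariants are even the right tool. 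The correct version of the middle step is that the allowed boundary topology forces the end Floer invariant to agree with the one for a ``standard'' end, and making that precise requires controlling both the Floer homology of each slice and the cobordism maps between consecutive slices --- which you correctly flag as the main obstacle, but which is where the actual content of \cite[Theorem 1.3]{CMM} lives. Second, your proposed source of exotic $\bR^4$'s --- removing a compact contractible $4$-manifold bounded by a Brieskorn sphere from a closed $4$-manifold --- does not obviously produce something homeomorphic to $\bR^4$: the complement of a compact piece in a closed manifold is again compact with boundary, not an open end. The exotic $\bR^4$'s used in \cite{CMM} come from a specific construction tailored so that the end Floer invariant is computable and nontrivial, and you would need to identify and verify such a construction to close the argument.
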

A natural question (cf.\ \cite{CWY}) is if the only smooth structure $\bR^4$ with such a metric is the standard one.

\subsection{Non-strictly positive scalar curvature} The $\mu$-bubble exhaustion property depends crucially on strict positivity of scalar curvature and does not seem to generalize to the case of positive scalar curvature decaying rapidly at infinity. For example, there is no classification of three-manifolds admitting complete metrics with scalar curvature $R>0$ as evidenced by the following conjectures:
\begin{conjecture}\label{conj:contract}
If $(M^3,g)$ is a contractible $3$-manifold with a complete metric with scalar curvature  $R>0$ then $M$ is diffeomorphic to $\bR^3$. \end{conjecture}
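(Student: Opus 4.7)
The plan is to reduce the conjecture to the classical fact that a contractible smooth open $3$-manifold is diffeomorphic to $\RR^3$ if and only if it is simply connected at infinity (a consequence of classical $3$-manifold topology together with Perelman's resolution of the Poincar\'e conjecture). The goal is then to show: for every compact $K \subset M$ there exists a compact $K' \supset K$ such that every loop in $M \setminus K'$ is null-homotopic in $M \setminus K$.

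The natural strategy is to build an exhaustion $\Omega_1 \subset \Omega_2 \subset \cdots$ by compact sets with smooth boundaries, each component of $\partial \Omega_i$ being a $2$-sphere, in the spirit of Theorem \ref{theo:exhaustion} and Wang's classification. Because $M$ is contractible, $H_2(M;\ZZ) = 0$, so each such boundary $2$-sphere bounds a unique compact region; this organizes the $\Omega_i$ into a tree-like decomposition analogous to the ``slice and dice'' sketched for Conjecture \ref{conj:urysohn}, with no ``slice'' step needed because $\pi_1(M) = 1$. A loop in $M \setminus \Omega_j$ would then sit in a finite union of components of $\Omega_i \setminus \Omega_{i-1}$ with spherical boundaries, and contractibility of $M$ should force it to be null-homotopic in $M \setminus \Omega_{j-1}$, giving simple connectivity at infinity and hence $M$ diffeomorphic to $\RR^3$.

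The main obstacle is that Theorem \ref{theo:exhaustion} (and the $\mu$-bubble construction underlying it) requires strict positivity $R \geq 1$: the prescribed-mean-curvature ODE $3 + \tfrac{3}{4}h^2 - |h'| \geq \delta$ (or its higher-dimensional analogue) needs a definite lower bound on $R$ inside each annular region in order to fit a $\mu$-bubble into a band of uniform width $L$. Under the hypothesis $R > 0$, the curvature may decay to $0$ at infinity, so no uniform $L$ works and the band width $L = L(\rho)$ must be chosen adaptively using the local infimum of $R$. One would then pick $h = h(\rho)$ solving the corresponding rescaled ODE and hope that each resulting $\mu$-bubble closes off to a compact region rather than escaping to infinity. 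The crux is showing that contractibility of $M$ (as opposed to, say, a Whitehead-type end) actually guarantees enough ``room'' for these adaptive bubbles at every scale: this amounts to a quantitative comparison between the decay rate of $R$ and the coarse geometry of a potentially non-simply-connected end, and seems to demand a genuinely new idea beyond the $R \geq 1$ case. An alternative route is to try a conformal change to $R \equiv 1$ before applying Wang's theorem, but solving a Yamabe-type problem on a complete non-compact manifold without uniform curvature bounds is itself obstructed, so this reduction does not appear to be straightforward.
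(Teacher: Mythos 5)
The statement you are attempting to prove is labeled as a \emph{conjecture} in the paper (Conjecture~\ref{conj:contract}) and is explicitly open in full generality; the paper proves it only under an additional \emph{bounded geometry} hypothesis (bounded sectional curvature and injectivity radius bounded below, via \cite{CLX}). So there is no ``paper's own proof'' to compare against in the unconditional setting, and your writeup, to its credit, is honest that it sketches a strategy rather than closing the argument.

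Your topological reduction is sound: a contractible open $3$-manifold is diffeomorphic to $\RR^3$ if and only if it is simply connected at infinity, and because contractible $3$-manifolds are irreducible (by Perelman), an exhaustion by compact regions whose boundary components are $2$-spheres would give an exhaustion by balls and finish the job. The genuine gap is exactly where you locate it: Theorem~\ref{theo:exhaustion} and the underlying $\mu$-bubble construction (Theorem~\ref{theo:mu-bubble-annulus}) need a uniform lower bound $R \geq R_0 > 0$ so that the band width $L = L(n,R_0)$ is uniform; with only $R > 0$ the required width $L(\rho)$ can blow up along the end, and the candidate $\mu$-bubble need not close off to a compact region. Your ``adaptive $L(\rho)$'' idea does not repair this by itself, because it conflates a pointwise decay rate of $R$ with a global statement about the coarse geometry of the end, and nothing in contractibility alone supplies the needed comparison.

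It is worth emphasizing that the paper's proof of the bounded-geometry variant (\cite{CLX}) does \emph{not} try to patch the $\mu$-bubble argument. It switches to weak inverse mean curvature flow, whose Geroch monotonicity formula
\[
\frac{d}{dt}\int_{\Sigma_t}H^2 \leq 4\pi\chi(\Sigma_t) - \tfrac12\int_{\Sigma_t}H^2
\]
is valid assuming only $R\geq 0$, and therefore does not require any uniform positivity. This produces an exhaustion by genus $\leq 1$ surfaces; the ball case gives $\RR^3$ directly, while the nested solid torus case is ruled out by a generalization of the Whitehead theorem (Theorem~\ref{thm:whitehead}). Bounded geometry then enters not to restore uniform positivity but to control the ``jumps'' of the weak IMCF. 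In short: your exhaustion-by-spheres endgame is consistent with the experts' picture, but the $\mu$-bubble route you propose is the wrong tool precisely because it cannot tolerate $R\to 0$ at infinity, and the replacement tool the paper actually uses (IMCF with Geroch monotonicity) still needs a bounded geometry hypothesis that is not part of the conjecture. Resolving the conjecture as stated remains open.
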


\begin{conjecture}\label{conj:handlebodies}
If $(M^3,g)$ is the interior of a genus $\gamma$ handlebody with a complete metric with scalar curvature  $R>0$ then\footnote{Note that the interior of a genus-one handlebody admits a complete metric with scalar curvature $R>0$ (cross a circle with a positively curved metric on the plane).} $\gamma \leq 1$. \end{conjecture}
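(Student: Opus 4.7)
The plan is to attack Conjecture \ref{conj:handlebodies} in two stages: first establish the strictly stronger statement that no $\gamma \geq 1$ handlebody interior admits a complete metric with $R \geq 1$, using the exhaustion from Theorem \ref{theo:exhaustion}; then isolate the essential difficulty in removing the uniform lower bound.

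For the first stage, suppose $(M^3, g)$ is complete with $R \geq 1$ and $M = \operatorname{int}(W)$ for a genus $\gamma \geq 1$ handlebody $W$. Theorem \ref{theo:exhaustion} yields a compact exhaustion $\Omega_1 \subset \Omega_2 \subset \cdots$ whose smooth boundaries $\Sigma_i := \partial \Omega_i$ carry $-2\Delta_{\Sigma_i} + R_{\Sigma_i} > 0$. Testing with $\varphi \equiv 1$ on a component $S \subset \Sigma_i$ gives $\int_S R_S > 0$; Gauss--Bonnet then forces $\chi(S) > 0$, so each $S$ is a $2$-sphere. Since $M$ is aspherical (the interior of a $\gamma \geq 1$ handlebody is a $K(F_\gamma, 1)$), each sphere $S_j$ bounds a $3$-ball $B_j \subset M$; $B_j$ cannot contain the compact core of $M$, since then the core's nontrivial $\pi_1 = F_\gamma$ would factor through $\pi_1(B_j) = 0$. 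Thus every $B_j$ lies on the end side of $S_j$. Now fix a basepoint $x_0$ in the core and a proper ray $\rho : [0,\infty) \to M$ from $x_0$ to infinity, transverse to $\Sigma_i$. Both endpoints of $\rho$ lie outside every $B_j$ (the basepoint by the previous step, infinity by compactness of $B_j$), so the algebraic intersection $\rho \cdot S_j = 0$ for each $j$ and therefore $\rho \cdot \Sigma_i = 0$. On the other hand, $\Sigma_i$ separates $\Omega_i \ni x_0$ from the non-compact end, forcing $\rho \cdot \Sigma_i = \pm 1$, a contradiction that rules out $\gamma \geq 1$ under $R \geq 1$.

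This Stage 1 argument is strictly stronger than the conjecture, which permits $\gamma = 1$ (realized by a paraboloid times $S^1$). The discrepancy is consistent because the paraboloid example has $R \to 0$ at infinity, so $R \geq 1$ fails and Theorem \ref{theo:exhaustion} is unavailable --- indeed, the coefficient condition \eqref{eq:mu-bubb-potential1} requires a positive lower bound on $R$ to absorb $\nabla_\nu h$. The content of the conjecture is therefore an obstruction to $\gamma \geq 2$ that survives when $R$ may decay. A plausible route exploits the fact that $F_\gamma$ is non-amenable precisely when $\gamma \geq 2$: one could attempt to construct a stable minimal surface of genus at least two in $M$ via a proper sweepout whose slices retain the genus of the end cross-section, then invoke the Schoen--Yau stability argument to contradict $R > 0$. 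The principal obstacle is preventing the sweepout limit from escaping to infinity when $R$ lacks a uniform lower bound; this difficulty is parallel to the still-open Conjecture \ref{conj:contract} and appears to require a genuinely new idea beyond the $\mu$-bubble methods in the excerpt.
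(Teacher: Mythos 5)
This statement is a conjecture, and it is open in full generality. The paper does not prove it: the cited resolution \cite{CLX} (discussed right after Conjecture \ref{conj:contract}) requires the additional hypotheses of bounded sectional curvature and injectivity radius bounded below, so even the paper only settles a bounded-geometry version. Your proposal is therefore honestly framed --- a partial result plus a diagnosis of the remaining obstruction --- but does not, and cannot with the tools in the excerpt, prove the conjecture as stated.

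Your Stage 1 (ruling out $\gamma \geq 1$ under the stronger hypothesis $R \geq 1$, via the exhaustion of Theorem \ref{theo:exhaustion}) is essentially sound, with two small points worth sharpening. The claim that each boundary sphere $S_j$ bounds a ball $B_j$ relies on irreducibility of handlebody interiors rather than asphericity per se. And $C \not\subset B_j$ does not by itself give $x_0 \notin B_j$; you also need that for $i$ large, $C \subset \operatorname{int}(\Omega_i)$ is disjoint from $S_j = \partial B_j \subset \Sigma_i$, so that connectedness of $C$ together with $C \not\subset B_j$ forces $C \cap B_j = \emptyset$. With that fix the parity argument on $\rho \cdot \Sigma_i$ is correct. (This Stage-1 conclusion also follows from J.\ Wang's classification theorem stated after Theorem \ref{theo:exhaustion}, since no $\gamma \geq 1$ handlebody interior is a possibly infinite connected sum of spherical factors and $S^1 \times S^2$ factors.)

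Your Stage 2 diagnosis is accurate: $\mu$-bubbles genuinely require a uniform lower bound on $R$, and some new tool is needed when $R$ may decay. The tool used in \cite{CLX} is weak inverse mean curvature flow: the Geroch monotonicity \eqref{eq:IMCFgeroch} only requires $R \geq 0$ and forces $\chi(\Sigma_t) \geq 0$ along a subsequence, giving an exhaustion by handlebodies of genus $\leq 1$; the genus-one case is then dispatched by a generalization of Theorem \ref{thm:whitehead}. Bounded geometry is used there precisely to control the flow's jump-to-infinity behavior --- the same escape problem you identify for sweepouts --- and that hypothesis has not been removed. So the proposal is a correct partial result together with an accurate description of the open difficulty, rather than a proof of Conjecture \ref{conj:handlebodies}.
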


Partial progress on Conjecture \ref{conj:contract} has been made by J.\ Wang (cf.\ \cite{CWY}) using minimal surfaces. For example: 
\begin{theorem}[\cite{Wang:contractible1}]\label{thm:whitehead}
There is no complete metric with $R>0$ on the Whitehead manifold.
\end{theorem}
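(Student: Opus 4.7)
The plan is to exploit the structure of the Whitehead manifold $M$ as a nested union $M = \bigcup_{i\geq 1} T_i$ of Whitehead-linked solid tori, where each inclusion $T_i \subset T_{i+1}$ is null-homotopic but not isotopic to the inclusion of a $3$-ball. In particular, the core circle $\gamma_i$ of $T_i$ is null-homotopic in $M$, so it bounds a (singular) disk; on the other hand, by irreducibility of $M$, every embedded $2$-sphere in $M$ bounds a $3$-ball. These two pieces of 3-manifold topology will be played against one another via positive scalar curvature.

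For each $i$ I would solve the Plateau problem for $\gamma_i$: minimize area among maps $f : D^2 \to M$ with $f|_{\partial D^2} = \gamma_i$. By classical existence theory (Morrey, Meeks--Yau) this yields a branched stable minimal immersion $\Sigma_i$ with $\partial \Sigma_i = \gamma_i$. The stability inequality \eqref{eq:stable-min}, combined with the Gauss equation in the form \eqref{eq:traced-gauss-min-surf} and the Gauss--Bonnet theorem with boundary, forces $\Sigma_i$ to be topologically a disk and gives an integral bound on $(R+|A|^2)|_{\Sigma_i}$ in terms of the length and geodesic curvature of $\gamma_i$. I would choose the $\gamma_i$ to have uniformly bounded length and geodesic curvature but to drift to infinity. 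The key topological consequence of the Whitehead linking is that every disk with boundary $\gamma_i$ must meet a fixed transverse meridian disk of $T_j$ for every $j<i$; hence the $\Sigma_i$ cannot uniformly escape to infinity.

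Combining the non-escape property with Schoen's curvature estimate for stable minimal surfaces, one extracts a subsequential limit that is a complete, stable, minimal lamination $\cL$ in $M$. The classification of Fischer-Colbrie--Schoen then constrains each leaf of $\cL$ to be conformally $S^2$ or $\CC$; the spherical case is excluded by irreducibility (a minimal $2$-sphere bounds a ball and may be removed), while the planar case combined with $R>0$ gives a positive Jacobi function on a conformal plane with finite total curvature. Standard foliation-by-parallel-surfaces arguments then show that a neighborhood of such a leaf in $M$ is a warped product, giving an end of $M$ topologically split over a plane and directly contradicting the non-trivial Whitehead linking of the nested tori $\{T_i\}$.

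The main obstacle is the non-escape step: a priori the fillings $\Sigma_i$ could collapse to points (as branched covers of small disks) or their regular parts could drift off to infinity, precluding extraction of a useful limit lamination. Turning the qualitative ``Whitehead linking'' into a quantitative lower bound on the area and curvature of $\Sigma_i$ in a fixed compact region—so that a limit lamination really survives—is the heart of the argument and the point at which the specific combinatorics of the Whitehead continuum (rather than merely contractibility of $M$) must be used in an essential way.
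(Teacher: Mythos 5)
Your opening moves match Wang's proof as sketched in the paper: exhaust the Whitehead manifold by nested solid tori, solve the Plateau problem for the meridional curves $\gamma_i$, and pass to a stable limit. But your endgame diverges and has a genuine gap, and you are missing the decisive quantitative lemma.

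You propose to classify the limit lamination via Fischer-Colbrie--Schoen (leaves conformal to $S^2$ or $\CC$) and then rule out the planar case by a Jacobi-field/warped-product rigidity argument. This does not work: in the FCS theorem the rigidity conclusions (totally geodesic, intrinsically flat, $R$ and $\Ric(\nu,\nu)$ vanishing along the leaf) hold only in the \emph{cylindrical} case, not the planar one. A flat plane in flat $\RR^3$ is stable and conformally $\CC$, and there is no contradiction with $R\geq 0$; even with $R>0$ pointwise but decaying at infinity (which is all you have on the noncompact Whitehead manifold), a conformally planar stable leaf causes no rigidity contradiction. Separately, your non-escape statement is too weak: you record only that every filling of $\gamma_i$ must ``meet'' a fixed meridian disk of $T_j$ for $j<i$. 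Wang's argument uses the stronger doubling property of the Whitehead continuum --- a meridional disk of $\Omega_{i+1}$ meets the core of $\Omega_i$ in at least \emph{two} points --- so the filling $D_i$ contains at least $2^{i-j}$ meridional curves of $\Omega_j$, and by the monotonicity formula the limit surface $\Sigma$ satisfies $\area(\Sigma\cap\Omega_1)\geq c\,2^I$ for every $I$. Your ``meets once'' version would not force this exponential blow-up.

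The missing key lemma, which replaces your classification attempt entirely, is that a complete two-sided stable minimal surface $\Sigma$ in a $3$-manifold with $R>0$ satisfies $\int_\Sigma R \leq 2\pi$, proved by combining the stability inequality (in the form \eqref{eq:2nd-var-rearrange-surf}) with the Cohn--Vossen inequality for total curvature. Since $R\geq R_0>0$ on the compact set $\Omega_1$, this yields a uniform upper bound $\area(\Sigma\cap\Omega_1)\leq C$ that is independent of $\Sigma$ and $I$. Playing the exponential lower bound against this uniform upper bound and sending $I\to\infty$ gives the contradiction directly, with no need to classify leaves, invoke irreducibility, or construct a warped-product splitting.
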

\begin{proof}[Sketch of the proof]
The Whitehead manifold may be exhausted by an increasing union of solid tori $\Omega_1\subset \Omega_2 \subset \dots$ with the property that if $D\subset \Omega_{i+1}$ is a disk with $\partial D \subset \partial\Omega_{i+1}$ a meridional curve, then $D$ intersects the core of $\Omega_i$ in at least two points. 

Assume for contradiction that there is a complete metric of positive scalar curvature on the Whitehead manifold. Let $\gamma_i \subset \partial\Omega_i$ be meridional and minimize area among disks bounded by $\gamma_i$ to find $D_i\subset M$ with $\partial D_i = \gamma_i$. Note that $D_i$ must intersect $\Omega_{i-1}$ in at least two meridional curves, which then implies it intersects $\Omega_{i-2}$ in at least four meridional curves and so on. Curvature estimates for stable minimal surfaces imply that some subsequence of $D_k$ limits in $C^\infty_\textrm{loc}$ to a complete stable minimal surface $\Sigma$ so that for $I$ fixed, $\Sigma$ intersects $\partial \Omega_I$ in at least one meridional curve, and thus  $\Sigma \cap \Omega_1$ has at least $2^I$ disjoint components. 

On the other hand, a crucial observation in \cite{Wang:contractible1} is that a complete stable minimal surface $\Sigma$ in $R>0$ has 
\[
\int_\Sigma R \leq 2\pi.
\]
This is proven by combining  the stability inequality with the Cohn--Vossen inequality for total curvature. Since $R\geq R_0>0$ on $\Omega_1$ by compactness, we thus find that \[\area(\Sigma \cap \Omega_1) \leq C\] with $C$ independent of $\Sigma,I$. On the other hand, the monotonicity formula for minimal surfaces and the previous paragraph implies that \[\area(\Sigma\cap \Omega_1)\geq c 2^I\] for $c$ independent of $\Sigma,I$. Taking $I\to \infty$ this is a contradiction.
\end{proof}

 With Y.\ Lai and K.\ Xu, we recently proved  Conjectures \ref{conj:contract}  and \ref{conj:handlebodies}, assuming bounded geometry, e.g.:
\begin{theorem}[\cite{CLX}]
If $M^3$ is a contractible $3$-manifold that admits a complete metric of positive scalar curvature $R>0$, bounded sectional curvature, and injectivity radius bounded below, then $M=\bR^3$. 
\end{theorem}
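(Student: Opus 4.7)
The plan is to reduce the claim to a topological one: by a classical theorem in low-dimensional topology, a contractible open $3$-manifold is homeomorphic to $\bR^3$ precisely when it is simply connected at infinity, and in dimension three such a homeomorphism can be upgraded to a diffeomorphism. Hence it suffices to show, under the hypotheses, that any loop sufficiently far out in $M$ bounds a disk which stays far out. The point of the bounded geometry assumption is to compensate for the weakness of $R>0$ (compared with $R\geq 1$) by supplying Cheeger--Gromov compactness, noncollapsing, and uniform stable minimal surface estimates.

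First I would build an exhaustion $\Omega_1 \subset \Omega_2 \subset \cdots$ by smooth compact domains whose boundaries are disjoint unions of $2$-spheres, in analogy with Theorem \ref{theo:exhaustion} but now under bounded geometry rather than uniform positivity of $R$. The direct $\mu$-bubble construction of Theorem \ref{theo:mu-bubble-annulus} demands a uniform scalar curvature lower bound $R \geq R_0 > 0$ to fix the band width $L = L(n, R_0)$, and this may fail here. However, bounded geometry allows pointwise rescaling: if $R(p) = \rho(p)^{-2}$, the metric $\rho(p)^{-2} g$ still lies in a bounded geometry class with uniform constants and has scalar curvature $1$ at $p$. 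One can therefore run a $\mu$-bubble construction in annular shells whose radial thickness is adapted to the local scale of $R$, with a prescribing function $h$ chosen so that the differential inequality \eqref{eq:mu-bubb-potential1} is satisfied fiber-by-fiber. Each resulting $\mu$-bubble has positive spectral scalar curvature and, since $M$ is contractible (hence orientable and simply connected), is a disjoint union of $2$-spheres by Theorem \ref{theo:psc-min-3d-sphere} applied in the spectral sense.

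Next I would prove simple connectedness at infinity. Given any $\Omega_i$, let $\gamma$ be a loop in the noncompact component of $M \setminus \Omega_{i+k}$ for $k$ large. By contractibility $\gamma$ bounds in $M$, so the Plateau problem yields a stable minimal disk $D$ with $\partial D = \gamma$. The central ingredient, as in the proof of Theorem \ref{thm:whitehead}, is the bound
\[
\int_D R \leq 2\pi,
\]
obtained by pairing the stability inequality with the Cohn--Vossen total curvature inequality. Since $R$ is continuous and positive, compactness gives a lower bound $R \geq R_i > 0$ on $\Omega_i$, whence $\area(D \cap \Omega_i) \leq 2\pi / R_i$ is bounded independently of $\gamma$. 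Bounded geometry supplies uniform curvature estimates for stable minimal surfaces in the ambient manifold, so any portion of $D$ meeting $\Omega_i$ is controlled in $C^2$ on a definite scale; combined with the monotonicity formula, this forbids $D$ from dipping deeply into $\Omega_i$ once $\gamma$ is far enough out. Thus $\gamma$ is null-homotopic outside $\Omega_i$, establishing simple connectedness at infinity.

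The main obstacle is the first step. The $\mu$-bubble method is calibrated to a fixed scale set by a uniform lower bound on $R$, whereas here $R$ may decay arbitrarily along sequences tending to infinity. Executing the patchwork of locally rescaled $\mu$-bubbles rigorously requires the prescribing function $h$ to interpolate consistently across regions of very different scalar curvature scales while still solving the relevant ODE. This is essentially equivalent to a quantitative noncollapsing statement for complete $3$-manifolds of bounded geometry with $R \geq 0$ and contractibility, ruling out Cheeger--Gromov blow-down limits carrying nontrivial topology; once this is in hand, the Plateau/stability argument of the previous paragraph closes the proof.
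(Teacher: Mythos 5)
Your overall plan---reduce to a topological statement about the exhaustion and then control things with the $\int_D R \le 2\pi$ estimate from Wang's Whitehead argument---uses the right second ingredient, but the first step takes a route that the paper deliberately abandons, and it contains a genuine gap. The paper does \emph{not} build the exhaustion by $\mu$-bubbles; it explicitly replaces $\mu$-bubbles by weak inverse mean curvature flow. The reason is exactly the obstacle you flag at the end: the $\mu$-bubble construction (Theorem \ref{theo:mu-bubble-annulus}) requires a uniform lower bound $R \ge R_0 > 0$ to set the band width $L(R_0)$, and when $R$ decays to $0$ at infinity the required band width $L(R_0) \sim R_0^{-1/2}$ diverges. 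Rescaling the metric pointwise does not repair this: it changes the band width in the \emph{original} metric in exactly the uncontrollable way, and within a single band far out one will generically see very different scalar curvature scales, so the prescribing function $h$ cannot be made to satisfy \eqref{eq:mu-bubb-potential1} uniformly. This is not a technical loose end one can wave away; it is the central reason the CLX argument switches tools. Inverse mean curvature flow works because the Geroch monotonicity formula \eqref{eq:IMCFgeroch} only requires $R \ge 0$, not a uniform positive lower bound, so it tolerates the decay of $R$ that kills the $\mu$-bubble method.

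There is a second structural issue. The IMCF argument yields $\chi(\Sigma_t) \ge 0$ along the flow, i.e.\ an exhaustion of $M$ by handlebodies of genus $\le 1$, not genus $0$. This weaker conclusion is the honest output and it forces the second phase of the proof: if the exhaustion ever involves genuine solid tori, one is in a Whitehead-type situation, and one invokes the generalization of Theorem \ref{thm:whitehead} (which is where the $\int_\Sigma R \le 2\pi$ stable-surface estimate enters). By contrast, your proposed Step 1 would produce an exhaustion of the contractible $M$ by compact domains with $S^2$ boundary; if that were achievable, then by classical results on monotone unions of $3$-cells you would have $M \cong \bR^3$ immediately and your Step 2 would be unnecessary. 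The fact that you still need Step 2 should itself be a red flag that Step 1 is claiming too much. Finally, Step 2 as written is also incomplete: the bound $\area(D \cap \Omega_i) \le 2\pi/R_i$ together with monotonicity shows $D$ cannot intersect $\Omega_i$ \emph{too much}, but it does not by itself prove $D$ avoids $\Omega_i$ entirely, which is what simple connectivity at infinity requires; Wang's argument instead derives a contradiction by forcing $D$ to enter $\Omega_1$ in exponentially many components. You should also be aware that the paper has to handle a further delicate point you have not addressed: weak IMCF may ``jump'' to infinity in finite time, and controlling this jump is where the bounded-geometry hypothesis and Xu's existence theory for weak IMCF are really used.
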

 \begin{proof}[Sketch of the proof]
We use inverse mean curvature flow to replace $\mu$-bubbles. Recall that a smooth surface flows by inverse mean curvature flow if it has outward speed $\frac{1}{H}$. Such a flow is likely to develop singularities, but a suitable notion of weak inverse mean curvature flow was developed by Huisken and Ilmanen en route to their proof of the Penrose inequality \cite{HI}. A key feature of an inverse mean curvature flow $\Sigma_t \subset (M^3,g)$ in a manifold with non-negative scalar curvature is the Geroch monotonicity formula 
\begin{equation}\label{eq:IMCFgeroch}
 \frac{d}{dt}  \int_{\Sigma_t} H^2   \leq 4\pi \chi(\Sigma_t) - \frac 12 \int_{\Sigma_t} H^2
\end{equation}
and (remarkably) this continues to hold for a weak flow. 

Suppose that $(M^3,g)$ is contractible with a complete metric of positive scalar curvature and bounded geometry. If there exists a weak inverse mean curvature flow for $t \in (0,\infty)$, then  \eqref{eq:IMCFgeroch} implies that $\chi(\Sigma_t) \geq 0$ for infinitely many $t$ (otherwise, $\int_{\Sigma_t}H^2$ would become negative for sufficiently large $t$). Thus, $M$ is exhausted by handlebodies of genus $\leq 1$. If $M\neq \bR^3$, then the handlebodies must be solid tori, nested like in the Whitehead manifold.  Theorem \ref{thm:whitehead} generalizes to cover this case, finishing the proof.

A crucial issue here is that the weak inverse mean curvature flow may want to ``jump'' to infinity in finite time. This case is more delicate. Briefly, Xu's recent existence results for weak inverse mean curvature flow  \cite{Xu:IMCF,Xu:outer} are combined with the bounded geometry assumption to ``track the jumping surface to infinity,''  which yields a similar torus exhaustion, finishing the proof as before. 
 \end{proof}
 
Our work suggests the following problem  (see \cite[\S 1.2]{CLX}):
\begin{conjecture}
Assume that $M^3$ has an exhaustion by solid tori and admits a complete metric with scalar curvature $R>0$ and bounded geometry. Then $M^3$ is the interior of a genus-one handlebody. 
\end{conjecture}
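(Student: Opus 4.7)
The plan is to combine Wang's Whitehead-style minimal-surface argument (Theorem~\ref{thm:whitehead}) with a topological dichotomy adapted to solid-torus exhaustions, in the spirit of the endgame already sketched in the proof of the previous theorem. Write the exhaustion as $T_1 \subset T_2 \subset \cdots$ with each $T_i$ a smoothly embedded solid torus. Call the inclusion $T_i \hookrightarrow T_{i+1}$ \emph{standard} if there exists an embedded meridional disk $D \subset T_{i+1}$ transverse to $\partial T_i$ whose intersection $D \cap T_i$ is a single meridional disk of $T_i$. In this case standard three-manifold topology shows that $T_i$ is isotopic in $T_{i+1}$ to a regular neighborhood of a core circle, and if this holds for all $i$ large then after subsequential isotopy the exhaustion matches the standard exhaustion of $\RR^2 \times S^1$, so $M$ is the interior of a genus-one handlebody, as claimed.

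In the remaining case, $T_i \hookrightarrow T_{i+1}$ is non-standard for infinitely many $i$; passing to a subsequence, every embedded meridional disk of $T_{i+1}$ meets $T_i$ in at least two meridional sub-disks, and iterating inside each sub-disk gives the exponential intersection bound that any embedded meridional disk of $T_{i+1}$ meets $T_j$ in at least $2^{i-j+1}$ meridional sub-disks for each $j \leq i$. Choose a meridian $\gamma_i \subset \partial T_i$ and solve the Plateau problem for $\gamma_i$ in $M$ (Morrey existence plus Meeks--Yau embeddedness in dimension three) to obtain a stable embedded minimal disk $D_i \subset M$ with $\partial D_i = \gamma_i$. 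Bounded sectional curvature and the injectivity radius lower bound yield a uniform Schoen--Simon curvature estimate for the stable $D_i$, hence $C^\infty_{\mathrm{loc}}$ subsequential convergence to a complete two-sided stable minimal surface $\Sigma \subset M$.

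By the topological intersection bound, for each fixed $I$ the disks $D_i$ with $i$ large meet $\partial T_I$ in many essential meridional loops, and at least one such loop survives the $C^\infty_{\mathrm{loc}}$ limit. Iterating the nested bound forces $\Sigma \cap T_1$ to have infinitely many essential meridional components. The stability inequality combined with the Cohn--Vossen inequality gives $\int_\Sigma R \leq 2\pi$ exactly as in \cite{Wang:contractible1}, and since $R \geq R_0 > 0$ on the compact set $\overline{T_1}$ by the assumed lower scalar bound and bounded geometry, one has the upper bound $\area(\Sigma \cap T_1) \leq 2\pi/R_0$. On the other hand, the monotonicity formula applied at one point in each essential component of $\Sigma \cap T_1$ forces $\area(\Sigma \cap T_1) = \infty$, contradicting the upper bound and ruling out the non-standard alternative.

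The hardest step is the topological dichotomy itself: showing rigorously that ``standard'' inclusions can be detected by a single-intersection meridional disk, and that the negation really does produce the multiplicative intersection growth Wang's argument demands (one expects standard tools on incompressible surfaces in solid tori and the Jaco--Shalen--Johannson decomposition to suffice, but the bookkeeping is delicate). The subsidiary analytic difficulty is propagating at least one essential meridional intersection with each $\partial T_I$ through the $C^\infty_{\mathrm{loc}}$ limit; this is precisely where the bounded geometry hypothesis should be used to prevent the $D_i$ from collapsing or drifting to infinity near $\partial T_I$, perhaps combined with barrier arguments or the weak inverse mean curvature tracking techniques of \cite{Xu:IMCF,Xu:outer}. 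Given these two ingredients, the remaining analytic portion is essentially a transcription of the proof of Theorem~\ref{thm:whitehead}.
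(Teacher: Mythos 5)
This is stated in the paper as an open conjecture (attributed to \cite[\S 1.2]{CLX}), not a theorem, so there is no argument of the paper to compare against; your framework does, however, mirror the endgame sketched for the CLX theorem, namely a generalization of Theorem~\ref{thm:whitehead} to nested solid-torus exhaustions.

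The topological dichotomy has a genuine gap, not merely ``delicate bookkeeping.'' Your alternative ``non-standard implies every meridional disk of $T_{i+1}$ meets $T_i$ in at least two meridional sub-disks'' omits the case where a meridian disk of $T_{i+1}$ misses $T_i$ entirely, i.e.\ $T_i$ lies in a $3$-ball inside $T_{i+1}$. After isotoping $D$ to minimize $|D \cap \partial T_i|$ the actual trichotomy is: $D$ meets the core of $T_i$ geometrically zero times (ball case), exactly once (standard, by the complementary-annulus argument), or at least twice (multiplicative-growth case). Your proof runs correctly on the last two branches but cannot close the first: if the ball case occurs for infinitely many $i$ then, passing to those indices, $M$ is a monotone union of open $3$-cells and Brown's theorem gives $M \cong \RR^3$. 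Since $\RR^3$ admits a complete metric with $R>0$ and bounded geometry (cap $[0,\infty)\times S^2$ with a round hemisphere and smooth the seam) and is \emph{not} the interior of a genus-one handlebody, this branch genuinely must be treated separately --- it also suggests the conclusion as literally stated should read ``genus $\leq 1$,'' matching Conjecture~\ref{conj:handlebodies}, or the exhaustion hypothesis should exclude exhaustions subordinate to a ball exhaustion. Two further points need tightening even on the surviving branches: the iterated ``$\geq 2$'' count must produce \emph{essential} meridional sub-disks at each stage for the bound to compound (true after innermost-disk surgeries, but this has to be said, since the negation of ``standard'' does not directly assert it); and the survival of an essential intersection with $\partial T_I$ through the $C^\infty_{\mathrm{loc}}$ limit is not soft compactness --- it is exactly the step in \cite{Wang:contractible1} where linking pins the disks to a fixed core, and your bounded-geometry hypothesis together with Schoen's curvature estimate must be threaded through there rather than invoked afterward.
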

 
In higher dimensions, very little is known concerning complete metrics with scalar curvature $R>0$. We briefly mention the following result obtained with Li  (see also \cite{LUY:liouville}), again using $\mu$-bubbles:
\begin{theorem}[{\cite[Theorem 3]{ChodoshLi2020generalized}}]\label{theo:torus-noncompact}
For $n\leq 7$, if $M^n$ is a possibly non-compact smooth manifold then $T^n\# M^n$ does not admit a complete Riemannian metric with scalar curvature $R>0$. 
\end{theorem}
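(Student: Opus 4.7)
I would argue by contradiction, extending the inductive descent of Theorem \ref{thm:geroch-high-dim} via the $\mu$-bubble technique to handle both the non-compactness of $M^n$ and the absence of a uniform lower bound on $R$. Suppose $(N, g) = (T^n \# M^n, g)$ is complete with $R > 0$. The degree-one collapse map $f \colon N \to T^n$ makes $f^* dx^1, \ldots, f^* dx^n$ non-trivial in $H^1(N;\RR)$. Pass to the abelian cover $\pi \colon \hat N \to N$ corresponding to $\ker(f_* \colon \pi_1(N) \to \ZZ^n)$; the $f^* dx^j$ lift to exact forms $d\tilde x^j$ on $\hat N$, which carries a free $\ZZ^n$-action by deck transformations.

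The first---and only non-compact---step is to produce a compact smooth hypersurface $\Sigma_{n-1} \subset \hat N$ separating the two ends of a slab $W = \{|\tilde x^n| \leq L\}$. For $L$ large, minimize the warped area functional
\begin{equation*}
\cA^h(\Omega) = \cH^{n-1}(\partial^* \Omega) - \int_\Omega h \, d\vol
\end{equation*}
over Caccioppoli sets $\Omega \subset W$ with $\Omega \supset \{\tilde x^n < -L\}$ near one face and $\Omega$ avoiding the other face. Take $h = h(\tilde x^n)$ blowing up at the two faces and satisfying an ODE adapted to the pointwise lower bound of $R$ on $W$, in the spirit of the sketch of Theorem \ref{thm:bandwidth}; this produces a compact smooth $\mu$-bubble $\Sigma_{n-1}$ (smoothness for $n \leq 7$ from Theorem \ref{theo:minimize}, and barrier arguments confining $\Sigma_{n-1}$ in the $\tilde x^n$-direction). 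The second variation together with the Gauss equation yields
\begin{equation*}
\int_{\Sigma_{n-1}} (R + \delta)\varphi^2 \leq \int_{\Sigma_{n-1}} 2|\nabla \varphi|^2 + R_{\Sigma_{n-1}} \varphi^2
\end{equation*}
for some $\delta > 0$, so $-2\Delta_{\Sigma_{n-1}} + R_{\Sigma_{n-1}}$ is strictly positive, i.e.\ $\Sigma_{n-1}$ has spectrally positive scalar curvature. By construction $[\Sigma_{n-1}]$ pairs non-trivially with $d\tilde x^1 \wedge \cdots \wedge d\tilde x^{n-1}$, so each $d\tilde x^j|_{\Sigma_{n-1}}$ ($j = 1, \ldots, n-1$) is non-trivial in $H^1(\Sigma_{n-1}; \RR)$.

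The remaining $n-3$ descents take place on compact manifolds: inside $\Sigma_{n-1}$, minimize area in the homology class Poincar\'e dual to $d\tilde x^{n-1}|_{\Sigma_{n-1}}$ to obtain $\Sigma_{n-2}$, and iterate. Spectral positive scalar curvature propagates along the tower via the formalism of Section \ref{sec:spect}, and the restrictions of the remaining $d\tilde x^j$'s stay non-trivial at each step. We eventually reach a closed smooth surface $\Sigma_2$ with spectrally positive scalar curvature and $[d\tilde x^1|_{\Sigma_2}] \neq 0$. Gauss--Bonnet combined with spectral positivity (as in the proof of Theorem \ref{theo:psc-min-3d-sphere}) forces $\chi(\Sigma_2) > 0$, so $\Sigma_2 \cong S^2$ and $H^1(\Sigma_2; \RR) = 0$, the desired contradiction.

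The hard part is the initial $\mu$-bubble step: absent a uniform lower bound $R \geq R_0 > 0$, the slab length $L$ cannot be taken from a universal bandwidth bound \emph{\`a la} Theorem \ref{thm:bandwidth}, so $h$ must be tuned to the actual pointwise lower bound of $R$ on $W$ while simultaneously blowing up at $\partial W$ to force compact containment of the minimizer. The flexibility of choosing $h$ pointwise---rather than being forced to minimize area globally as in the original Schoen--Yau proof of Theorem \ref{thm:geroch-high-dim}---is what allows the argument to survive in this non-compact, not-uniformly-positive setting; one also has to verify that the spectral coefficient produced at each descent stays in the admissible range so that Gauss--Bonnet still applies at the bottom of the tower.
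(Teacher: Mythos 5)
Your overall strategy---replace the first area-minimization in the Schoen--Yau descent by a $\mu$-bubble to get around non-compactness, then descend on compact minimizers---is the right one and matches the spirit of \cite{ChodoshLi2020generalized}. But there are two genuine gaps, the first of which is fatal as written.

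\textbf{Wrong cover.} You pass to the cover corresponding to $\ker(f_*\colon \pi_1(N)\to\ZZ^n)$, i.e.\ the full $\ZZ^n$-cover, and observe (correctly) that the $f^*dx^j$ lift to \emph{exact} one-forms $d\tilde x^j$. But then your subsequent topological claims fail. Since each $d\tilde x^j$ is exact on $\hat N$, the $(n-1)$-form $d\tilde x^1\wedge\cdots\wedge d\tilde x^{n-1}$ is exact as well, so by Stokes its integral over any closed hypersurface $\Sigma_{n-1}$ vanishes: there is no non-trivial pairing. Likewise each $d\tilde x^j|_{\Sigma_{n-1}}$ is exact, hence trivial in $H^1(\Sigma_{n-1};\RR)$, so the inductive descent never gets started. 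The object you actually need is a cover in which one chosen direction becomes $\RR$ (to form the band) while the remaining $n-1$ circle classes survive in cohomology---that is, the single cyclic $\ZZ$-cover corresponding to $\ker(\pi_1(N)\to\ZZ)$ for just one coordinate of $T^n$. In that cover $d\tilde x^1,\ldots,d\tilde x^{n-1}$ remain non-exact, the slice $T^{n-1}\times\{\pm L\}$ is compact, and the descent can proceed as in Theorem~\ref{thm:geroch-high-dim}.

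\textbf{Compactness of the $\mu$-bubble.} Even with the correct cover fixed, the band $W$ is not compact: it contains non-compact pieces coming from the $M$-summands, and in your $\ZZ^n$-cover the ``faces'' $\{\tilde x^n=\pm L\}$ are themselves non-compact, so the minimization problem as stated is not well-posed. Blowing $h$ up at the two faces only confines the bubble in the $\tilde x^n$-direction; it says nothing about the minimizer escaping into the unbounded $M$-regions of $W$ (where $R$ may decay to zero, so no curvature-based diameter bound is available a priori). Establishing that the $\mu$-bubble is compact---either by a careful choice of the class of competitors so that the bounded perimeter forces compactness, or by an a posteriori argument---is the crux of the matter and is exactly what makes this theorem non-trivial. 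This is where the actual proof in \cite{ChodoshLi2020generalized} expends its effort, and it should not be folded into a parenthetical ``barrier arguments confining $\Sigma_{n-1}$.''
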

Along with \cite{SY:conf-flat,LUY:liouville}, this provided the missing ingredient in the Schoen--Yau Liouville theorem \cite{SY:conf-flat} for locally conformally flat manifolds, with strong consequences for higher homotopy groups of locally conformally flat manifolds with nonnegative scalar curvature. 

S.\ Chen, J.\ Chu, and J.\ Zhu have recently proved results \cite{ChenChuZhu,ChenChuZhu2} that combine and generalize Theorems \ref{theo:asph45} and \ref{theo:torus-noncompact}. Among other things, they proved that for $n\in \{3,4,5\}$, punctured aspherical $n$-manifolds do not admit complete metrics with scalar curvature $R>0$.

\section{Other results.} Here are several related results (cf.\ \cite{Rosenberg:PSC.progress,gromov2019lectures,ChodoshLi:survey,Chodosh:minscal,Stolz:survey}).  Gromov and Lawson \cite{GL:simply-connected-PSC} as well as Schoen and Yau \cite{SY:manuscripta} have shown that surgeries of codimension $\geq 3$ can be done so as to preserve positivity of scalar curvature. Using this, Gromov and Lawson  and Stolz  were able to give a complete classification  of simply connected closed manifolds of dimension $\geq 5$ that admit metrics of positive scalar curvature. 
\begin{theorem}[\cite{GL:simply-connected-PSC,Stolz:simply.conn}]\label{theo:simply-PSC}
For $n\geq 5$ consider $M^n$ a closed, simply connected, smooth manifold. 
\begin{enumerate}
\item If $M$ is non-spin then it admits a metric of positive scalar curvature.
\item If $M$ is spin then it admits a metric of positive scalar curvature if and only if $\alpha(M) = 0$. 
\end{enumerate}
\end{theorem}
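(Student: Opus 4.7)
The plan is to treat the two directions separately. The obstruction direction (spin plus PSC implies $\alpha(M)=0$) is immediate from the Lichnerowicz identity $D^2 = \nabla^*\nabla + \tfrac14 R_g$ for the Dirac operator on spinors: $R_g>0$ makes $D^2$ strictly positive, so $\ker D=0$, so the $KO$-valued index of $D$, which is by definition $\alpha(M)$, vanishes. This uses neither simple connectivity nor the dimensional hypothesis $n\geq 5$.

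For the constructive direction I would follow the Gromov--Lawson--Stolz strategy, which rests on three ingredients. The first is the codimension-$\geq 3$ surgery theorem of Gromov--Lawson and Schoen--Yau already cited in the excerpt: a surgery of codimension $\geq 3$ preserves the existence of a PSC metric, via an explicit ``neck'' construction on the surgery handle. The second is a bordism translation. For a closed simply connected $M^n$ with $n\geq 5$, if $W^{n+1}$ is a bordism to some other $M'$, then standard handle-trading arguments (using $\pi_0=\pi_1=0$ to cancel $0$- and $1$-handles, and Poincar\'e duality to eliminate the dual high-index handles) let us realize $W$ using only handles of index $2,\dots,n-1$. This means $M'$ is obtainable from $M$ by codimension-$\geq 3$ surgeries, so admission of a PSC metric depends only on the class $[M]$ in $\Omega_n^{SO}$ in the non-spin case and in $\Omega_n^{Spin}$ in the spin case. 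The third ingredient is producing PSC representatives. In the non-spin case Gromov--Lawson check directly that every class has a representative built from sums and products of known PSC manifolds (for instance $\CC P^{2k}$ and Milnor hypersurfaces in $\CC P^a\times \CC P^b$). In the spin case, Stolz's theorem identifies $\ker(\alpha\colon \Omega_n^{Spin}\to KO_n)$ as the ideal generated by total spaces of $\HH P^2$-bundles with structure group $\mathrm{PSp}(3)$; each such total space carries a PSC metric by a Vilms-type submersion construction, equipping the fiber with the symmetric (hence $\mathrm{PSp}(3)$-invariant) PSC metric on $\HH P^2$ shrunk by a factor $\varepsilon$, after which O'Neill's formula yields positive scalar curvature dominated by the fiber term of order $\varepsilon^{-2}$.

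The main obstacle is Stolz's identification of $\ker\alpha$ with the $\HH P^2$-bundle ideal. His proof proceeds through a deep analysis of $MSpin$ as a module over the connective real $K$-theory spectrum $ko$, using the Adams spectral sequence to split off $ko$-summands and to locate the complementary summands on which $\alpha$ vanishes and which are realized by $\HH P^2$-bundles. There is no known purely geometric shortcut; this algebraic-topology input is the true content of the spin half of the theorem, with the Lichnerowicz, surgery and bordism-reduction steps being relatively standard once it is in hand.
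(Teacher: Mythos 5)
Your sketch is correct and follows the standard Gromov--Lawson--Stolz argument, which is exactly what the paper indicates in the surrounding text (the codimension-$\geq 3$ surgery theorem, bordism reduction via handle trading, Hitchin's $KO$-refinement of the Lichnerowicz obstruction, and Stolz's $\HH P^2$-bundle computation of $\ker\alpha$). The paper does not reprove the theorem---it is stated as cited background---so there is no alternative argument to compare against, but your account faithfully reproduces the proof strategy of the cited references.
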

Here, $\alpha(M) \in KO(S^n)$ is a topological invariant that vanishes for a spin manifold that admits positive scalar curvature (this is due to Hitchin \cite{Hichin:harmonic-spinor} who generalized the classical Lichnerowicz obstruction \cite{Lichnerowicz}). For general $\pi_1(M)$, such a result is not known, cf.\ \cite{Schick1998counterexample}. 

The situation for four-manifolds (even simply connected) is much more complicated since in addition to the minimal surface and Dirac obstructions there are also the Seiberg--Witten invariants. We do not discuss this here except to note that there exist examples showing that Theorem \ref{theo:simply-PSC} fails for four-manifolds \cite[Counterexample 1.13]{Rosenberg:PSC.progress}. On the other hand, there is some hope that the Ricci flow could lead to (partial) classification results for closed four-dimensional admitting Riemannian metrics of positive scalar curvature. Motivated by this picture, Bamler, Li, and Mantoulidis used \cite{BLM} minimal surface techniques somewhat similar to the ``slice'' argument in \cite{ChodoshLi2020generalized} but combined with techniques used to ``cap off'' cuts by stable minimal hypersurfaces from \cite{MantSch,LiMantoulidis:spect} to prove:
\begin{theorem}
A closed oriented smooth four-manifold $M$ that admits a metric of positive scalar curvature can be obtained via $0,1$-surgeries\footnote{The $0$-surgeries (connected sums) may occur at orbifold points.} from a possibly disconnected, closed, oriented four-orbifold $M'$ with isolated singularities also admitting positive scalar curvature and with $b_1(M') = 0$. 
\end{theorem}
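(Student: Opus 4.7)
My plan is to induct on $b_1(M)$ by cutting along a stable minimal three-dimensional slice and filling each side with a positive-scalar-curvature four-orbifold, combining the ``slice'' step of Section \ref{sec:induct} with the Mantoulidis--Schoen metric-interpolation technique. If $b_1(M) = 0$ we simply set $M' = M$; otherwise each step reduces $b_1$ by at least one (at the cost of possibly introducing isolated orbifold singularities).

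Assume $b_1(M) > 0$. Pick a nontorsion $\alpha \in H^1(M;\ZZ)$; its Poincar\'e dual gives a nonzero class $\sigma \in H_3(M;\ZZ)$. Since $n+1 = 4 \leq 7$, Theorem \ref{theo:minimize} produces a smooth two-sided area-minimizer $\Sigma^3 \in \sigma$. Combining \eqref{eq:stable-min} with the traced Gauss equation \eqref{eq:traced-gauss-min-surf} exactly as in the derivation of \eqref{eq:stab-inductive} shows that $-2\Delta_\Sigma + R_\Sigma$ is strictly positive. As explained in Section \ref{sec:spect}, this spectral positivity upgrades to a genuine PSC metric on $\Sigma$ via conformal change by the principal eigenfunction, so Theorem \ref{theo:psc-3d} implies that each component of $\Sigma$ is a connected sum of spherical space forms $S^3/\Gamma_j$ and $S^2 \times S^1$ factors.

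The main technical step is to cut $M$ along $\Sigma$, obtaining $\hat M$ with $\partial\hat M = \Sigma\sqcup\Sigma$, and to fill each boundary copy by a compact four-orbifold cap $N$ with isolated singularities, $\partial N \cong \Sigma$, carrying a PSC metric that $C^0$-matches the induced metric on $\Sigma$ with a mean-curvature jump of the correct sign; after gluing, the standard conformal smoothing of Miao (as used in \cite{MantSch,LiMantoulidis:spect}) yields a smooth PSC metric on the closed orbifold $M'' := \hat M \cup_\Sigma N \cup_\Sigma N$. Topologically $N$ is built by boundary-connected-summing orbifold cones $D^4/\Gamma_j$ (one per spherical factor, each contributing one isolated orbifold singularity) with copies of $D^2 \times S^2$ (one per $S^2 \times S^1$ factor). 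Constructing the PSC metric on $N$ is the main obstacle and is the content of \cite{BLM}: one deforms the induced metric on $\Sigma$ through PSC metrics to a standard product/round model on each building block, uses a long warped-product collar to realize this deformation while creating the desired mean-curvature gap, and attaches a rigid explicit PSC model cap on each building block.

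The resulting $M''$ is a closed oriented four-orbifold with isolated singularities and a smooth PSC metric, and $b_1(M'') \leq b_1(M) - 1$ because $\alpha$ becomes null-cohomologous after capping (the disk in $D^2\times S^2 \subset N$ kills its dual circle in $M$). Moreover $M$ is recovered from $M''$ by reversing each cap: each $D^2 \times S^2$ cap corresponds to a 1-surgery and each orbifold cone $D^4/\Gamma_j$ to a 0-surgery at its orbifold point. Iterating the construction on each connected component of $M''$ (which is again a closed oriented orbifold with isolated singularities and PSC, to which the argument applies in the orbifold category, choosing minimizers disjoint from the singular set) produces, after finitely many steps, the desired possibly disconnected $M'$ with $b_1 = 0$ on each component.
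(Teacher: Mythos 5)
The paper does not actually prove this theorem; it only cites \cite{BLM} and describes the method in a single sentence as a ``slice'' argument (in the style of \cite{ChodoshLi2020generalized}) combined with the ``cap off'' techniques of \cite{MantSch,LiMantoulidis:spect}. Your proposal is aligned with that description: slice along an area-minimizer $\Sigma^3$ dual to a nontorsion class, conformally upgrade spectral PSC to genuine PSC on $\Sigma$, invoke Theorem~\ref{theo:psc-3d} to identify $\Sigma$ as a connected sum of spherical space forms and $S^2\times S^1$ factors, cap each copy of $\Sigma$ by a simply connected PSC orbifold filling $N$ (boundary-connected sums of $D^4/\Gamma_j$ and $D^2\times S^2$) using a Miao/Mantoulidis--Schoen interpolation across the cut, and iterate. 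You have identified the right ingredients and the right references.

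That said, a few points are vaguer than they should be. (i) The $b_1$-decrease is justified only by ``the disk in $D^2\times S^2$ kills its dual circle,'' which addresses $S^2\times S^1$ factors but says nothing when $\Sigma$ is a sum of pure spherical space forms or has several components. The robust argument is homological: since $H_1(N)=0$, comparing the Mayer--Vietoris sequences of $M=\hat M\cup(\Sigma\times[-1,1])$ and $M''=\hat M\cup(N\sqcup N)$ shows that $H_1(M'')$ is a quotient of $H_1(\hat M)/\langle i_{1*}H_1\Sigma,\ i_{2*}H_1\Sigma\rangle$, while $H_1(M)\supset \ZZ$ generated by a loop transverse to the non-separating $\Sigma$ plus $H_1(\hat M)/\langle i_{1*}a - i_{2*}a\rangle$; hence $b_1(M'')\le b_1(M)-1$ regardless of the prime decomposition of $\Sigma$. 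You need this to run the induction. (ii) ``Choosing minimizers disjoint from the singular set'' hides a real issue: after the first step you are doing geometric measure theory on a $4$-orbifold with isolated singularities, and you must show either that a minimizer can be arranged to avoid the (codimension-$4$) singular points, or that the regularity theory and Theorem~\ref{theo:minimize} extend to the orbifold category; this is nontrivial and is handled with care in \cite{BLM}, not something you can wave through in a parenthetical. (iii) The translation of each filling block into a single $0$- or $1$-surgery is not literally correct when $\Sigma$ has a nontrivial prime decomposition: removing both copies of $N$ and regluing $\Sigma\times[-1,1]$ is not one surgery, so the surgeries should be read off from a handle decomposition of $N$ relative to $\Sigma$ rather than from the caps as wholes.
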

They ask if it is possible to arrange the stronger condition $\pi_1^\textrm{orb}(M') = 0$. 

In higher dimensions, R\"ade has combined minimal surface techniques with surgery theory to prove  the following $S^1$-stability result as conjectured by Rosenberg. Combined with the generic regularity results as discussed in Section \ref{sec:generic} we have:
\begin{theorem}[\cite{Rade}, cf.\ \cite{CRZ}]\label{theo:S1-stab}
If $M^n$ is a closed smooth manifold with $n \in \{2,\dots,10\}\setminus\{4\}$ then $M$ admits a metric of positive scalar curvature if and only if $M \times S^1$ does. 
\end{theorem}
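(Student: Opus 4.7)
The forward direction is immediate: the product metric on $M \times S^1$ with any circumference of the $S^1$-factor has scalar curvature equal to $R_M > 0$. For the converse, assume $(M \times S^1, g)$ has $R > 0$. My plan is to apply the codimension-one minimization technique of Schoen--Yau to the homology class $\sigma := [M \times \{\mathrm{pt}\}] \in H_n(M \times S^1; \ZZ)$, i.e.\ the class Poincar\'e dual to the projection to $S^1$.

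First I would produce a smooth representative $\Sigma^n \in \sigma$. For $n \leq 6$ this comes directly from Theorem \ref{theo:minimize}. For $7 \leq n \leq 10$ I would invoke the generic regularity results alluded to in Section \ref{sec:generic}: after an arbitrarily small perturbation of $g$ that preserves $R > 0$, the area-minimizer in $\sigma$ can be taken smooth. The upper bound $n \leq 10$ reflects the current state of generic regularity for codimension-one minimizers in ambient dimension up to $11$.

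Next, I would transfer positive scalar curvature from $M \times S^1$ to $\Sigma$. Since $\Sigma$ has algebraic intersection number one with each $S^1$-fibre, some component represents $\sigma$, and the first projection $M \times S^1 \to M$ restricts to a degree-one map $f: \Sigma \to M$. The stability inequality rewritten as in \eqref{eq:stab-inductive} yields $-2\Delta_\Sigma + R_\Sigma > 0$ spectrally, which the conformal procedure recalled in Section \ref{sec:spect} upgrades to a genuine PSC metric $g_\Sigma$ on $\Sigma$.

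The main obstacle is the third step: recovering a PSC metric on $M$ itself from $(\Sigma, g_\Sigma)$ and the degree-one map $f: \Sigma \to M$. The strategy is surgery-theoretic, following R\"ade: extend $f$ to a normal bordism $F: W^{n+1} \to M \times [0,1]$ with boundary $\Sigma \sqcup M$ and, for $n \geq 5$, use handle cancellation together with the Whitney trick to reduce $F$ to one in which the surgeries relating $\Sigma$ to $M$ all have codimension $\geq 3$ in $M$. Each such surgery can then be performed while preserving $R > 0$ by the Gromov--Lawson surgery theorem, so reversing them transports PSC from $\Sigma$ to $M$. The exclusion $n = 4$ is precisely the dimension in which the smooth Whitney trick fails; the cases $n = 2, 3$ should be handled by direct classification of closed PSC manifolds (disjoint unions of spheres for $n = 2$, forcing $M = S^2$; Theorem \ref{theo:psc-3d} for $n = 3$ applied to $\Sigma$, combined with the degree-one constraint on $\pi_1$ and $H_*$).
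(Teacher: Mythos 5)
Your overall strategy for $n \geq 5$ (minimize area in $[M\times\{\mathrm{pt}\}]$, use generic regularity from Section~\ref{sec:generic} for $n \in \{7,\dots,10\}$, transfer spectral PSC to $\Sigma$ via stability, and then promote PSC from $\Sigma$ to $M$ by surgery theory and Gromov--Lawson) does match the structure of R\"ade's proof as the paper describes it, and you correctly identify $n\leq 10$ as coming from generic regularity in ambient dimension $\leq 11$.

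However, your explanation of the exclusion of $n=4$ contains a genuine error. You write that $n=4$ is excluded ``precisely'' because the smooth Whitney trick fails, i.e.\ you treat it as a gap in the \emph{proof}. In fact the theorem is \emph{false} for $n=4$: there are closed smooth $4$-manifolds $M$ that do not admit PSC (because of Seiberg--Witten obstructions) but for which $M\times S^1$ does admit PSC, since the Seiberg--Witten obstructions ``dissolve'' after taking the $S^1$-product (see \cite[Remark 1.25]{Rosenberg:PSC.progress}). A reader of your proof would come away thinking $n=4$ is merely an open case, which it is not.

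Your treatment of $n=2,3$ also diverges from the paper and is incomplete as stated. For $n=2$, listing the closed PSC surfaces does not by itself show that PSC on $M^2\times S^1$ forces PSC on $M^2$; the paper instead applies the $3$-manifold classification (Theorem~\ref{theo:psc-3d}) directly to the $3$-manifold $M^2\times S^1$. For $n=3$, the paper does \emph{not} run the minimization argument: it argues topologically that if $M^3$ has no PSC metric then its prime decomposition contains an aspherical factor $K$, so $M^3\times S^1$ admits a degree-one map onto the closed aspherical $4$-manifold $K\times S^1$, contradicting the mapping version of Theorem~\ref{theo:asph45}. Your proposal to minimize in $M^3\times S^1$, classify $\Sigma^3$, and then use the degree-one map $\Sigma\to M$ together with constraints on $\pi_1$ and $H_*$ requires a further nontrivial input (that PSC on a $3$-manifold descends along a degree-one map, via geometrization and prime decomposition); it is plausible but more involved, and not what is done here. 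The reason the low-dimensional cases must be handled separately is exactly that the surgery step of the $n\geq 5$ argument requires $n\geq 5$.
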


The $n=2$ case follows directly from Theorem \ref{theo:psc-3d}. For $n=3$, if $M^3\times S^1$ admits positive scalar curvature but $M^3$ does not then $M^3$ has an aspherical factor $K$ in its prime decomposition. Then $M^3\times S^1$ admits a degree one map to the aspherical four-manifold $K\times S^1$.  This contradicts the mapping version of Theorem \ref{theo:asph45} (cf.\ \cite{CLL:suff.conn.psc}). See also \cite{Agol:MO} for a proof using deep results in three-manifold topology and Schoen--Yau inductive descent. Surprisingly, Theorem \ref{theo:S1-stab} fails for $n=4$ (since the Seiberg--Witten obstructions for $M$ can ``dissolve'' after taking the $S^1$-product), see \cite[Remark 1.25]{Rosenberg:PSC.progress}. For $n \in \{5,\dots,10\}$, the strategy from \cite{Rade} is to find $\Sigma$ minimizing area in the homology class $[M\times\{0\}] \in H_n(M\times S^1;\ZZ)$ which then admits a metric of positive scalar curvature if $M\times S^1$ does. When the dimension of $M$ is $\geq 5$, surgery theory then allows one to promote this to a metric of positive scalar curvature on $M$. If $n\leq 6$ then $\Sigma$ will not have any singularities (cf.\ Theorem \ref{theo:minimize}). When $n\in \{7,8,9,10\}$, the results in Section \ref{sec:generic} show that the positive scalar curvature metric on $M\times S^1$ may be perturbed so that the resulting minimizer $\Sigma$ is smooth. The proof is then completed as before. 

Finally, we briefly review work relating scalar curvature and geometric conditions such as systolic inequalities. For example, if we write $\sys_2(M,g)$ for the least area non-zero element of $\pi_2(M)$ a well-known result of Bray, Brendle, and Neves gives that if $(M^3,g)$ has $\pi_2\neq 0$ and $R\geq 2$ then 
\[
\sys_2(M,g)\leq 4\pi
\] with equality only for a quotient of the cylinder $\bR \times \bS^2$. This has been recently generalized by K.\ Xu using inverse mean curvature flow who proved \cite{Xu:systole} that if $(M,g)$ is not topologically covered by $\bR \times \bS^2$ then 
\[
\sys_2(M,g) \leq 2.8\pi.
\]
We also note the recent resolution of the Horowitz--Myers conjecture by Brendle and Hung \cite{BrendleHung:HM} who (roughly speaking) prove a systolic inequality 
\[
2\sigma^n \inf_{\partial M} (H-(n-1)) \leq \left( \frac{4\pi}{n}\right)^n
\]
for the length $\sigma$ of the shortest non-contractible curve on the boundary of $(B^2\times T^{n-1},g)$, under the assumption that the scalar curvature satisfies $R\geq -n(n-1)$.

\section{Other curvature conditions}

Scalar curvature is not the only curvature condition that may be studied via minimal surfaces. For example Y.\ Shen and R.\ Ye introduced the bi-Ricci curvature, defined as
\[
\biRic(u,v) = \Ric(u,u) + \Ric(v,v) - \sec(u,v)
\]
for all orthonormal vectors $u,v$. One may check that in three-dimensions the biRicci curvature equals the scalar curvature (up to a multiplicative constant) but in higher dimensions positivity of biRicci implies positivity of scalar curvature (but not vice versa). The following is a key observation of Shen and Ye:
\begin{theorem}[\cite{ShenYe,SY:general}]\label{thm:SY-biRic-stab}
Suppose that $\Sigma^n\subset (M^{n+1},g)$ is a two-sided stable minimal hypersurface in a manifold with $\biRic \geq 1$. Write $\lambda_{\Ric,\Sigma}$ for the smallest eigenvalue of the Ricci curvature of the induced metric on $\Sigma$. Then the Ricci curvature of the induced metric $\Sigma$ is strictly positive in the spectral sense. 
\end{theorem}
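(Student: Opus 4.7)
The plan is to apply the Gauss equation along a single tangent direction---namely the minimum-eigenvalue direction of $\Ric_\Sigma$---rather than taking a full trace as in \eqref{eq:traced-gauss-min-surf}. At a point $x \in \Sigma$, choose an orthonormal frame $e_1,\dots,e_n$ of $T_x\Sigma$ so that $e_1$ realizes the smallest eigenvalue $\lambda_{\Ric,\Sigma}(x)$ of the induced Ricci tensor, and let $\nu$ be the unit normal. Writing $A_{ij} = A(e_i,e_j)$ for the components of the second fundamental form, the Gauss equation yields $\sec_\Sigma(e_1,e_j) = \sec_M(e_1,e_j) + A_{11}A_{jj} - A_{1j}^2$; summing over $j = 2,\dots,n$ and using $H = 0$ together with $|A(e_1,\cdot)|^2 = \sum_{j=1}^n A_{1j}^2$, one obtains
\[
\Ric_\Sigma(e_1,e_1) \;=\; \Ric_M(e_1,e_1) - \sec_M(e_1,\nu) - |A(e_1,\cdot)|^2.
\]

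The next step is to recognize the ambient combination in terms of bi-Ricci: by the very definition of $\biRic$,
\[
\Ric_M(e_1,e_1) - \sec_M(e_1,\nu) \;=\; \biRic(e_1,\nu) - \Ric_M(\nu,\nu).
\]
Combining with the hypothesis $\biRic \geq 1$ and the trivial pointwise bound $|A(e_1,\cdot)|^2 \leq |A|^2$ gives the pointwise inequality
\[
\Ric_M(\nu,\nu) + |A|^2 \;\geq\; 1 - \lambda_{\Ric,\Sigma}.
\]
Multiplying by $\varphi^2$, integrating over $\Sigma$, and applying the stability inequality \eqref{eq:stable-min} produces
\[
\int_\Sigma \varphi^2 \;\leq\; \int_\Sigma |\nabla \varphi|^2 + \lambda_{\Ric,\Sigma}\,\varphi^2
\]
for every $\varphi \in C^\infty_c(\Sigma)$, which is exactly the claim that $-\Delta_\Sigma + \lambda_{\Ric,\Sigma}$ is strictly positive in the spectral sense (with lower bound $1$).

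The computation is essentially algebraic, so the main obstacle is conceptual rather than technical: one must realize that not tracing the Gauss equation---and instead evaluating it along a single, cleverly chosen tangent direction---produces a defect term of exactly the form $\Ric_M(\nu,\nu) + |A(e_1,\cdot)|^2$ that stability is built to control, while simultaneously isolating a single eigenvalue of $\Ric_\Sigma$. A minor technicality is that $e_1$ and $\lambda_{\Ric,\Sigma}$ are in general only continuous (and globally defined only as a min-eigenvalue function), but since the key estimate is derived pointwise before integrating against $\varphi^2$, measurability is all that is needed and this causes no difficulty.
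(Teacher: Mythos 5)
Your proof is correct and follows essentially the same route as the paper: evaluate the (once-traced) Gauss equation in the direction realizing $\lambda_{\Ric,\Sigma}$, identify the ambient combination $\Ric_M(e_1,e_1)-\sec_M(e_1,\nu)$ as $\biRic(e_1,\nu)-\Ric_M(\nu,\nu)$, bound $|A(e_1,\cdot)|^2 \leq |A|^2$, and feed the result into the stability inequality. You've merely written out the intermediate cancellations (using $H=0$) that the paper compresses into a single inequality, and added the (accurate) remark about regularity of the eigenvalue function.
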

\begin{proof}
Let $e_1,\dots,e_n,\nu$ denote an orthonormal basis for $T_pM$ with $\lambda_{\Ric,\Sigma} = \Ric_\Sigma(e_1,e_1)$. Using the Gauss equations we obtain
\begin{align*}
\lambda_{\Ric,\Sigma} & = \sum_{j=2}^n \sec_\Sigma(e_1,e_j)\\ &  = \sum_{j=2}^n (\sec_M(e_1,e_j) + A(e_1,e_1)A(e_j,e_j) - A(e_1,e_j)^2) \\&  \geq \biRic(e_1,\nu) - \Ric_M(\nu,\nu) - |A|^2
\end{align*}
where we used $\tr A = 0$ in the last step. Using $\biRic(e_1,\nu)\geq 1$, stability of $\Sigma$ as in \eqref{eq:stable-min} may be rewritten as
\[
\int_\Sigma \varphi^2 \leq \int_{\Sigma}|\nabla \varphi|^2 + \lambda_{\Ric,\Sigma}\varphi^2 .
\]
This completes the proof.
\end{proof}
One may expect that the Bonnet--Myers theorem holds for spectral Ricci curvature. Indeed, Shen and Ye were able to prove the following diameter bound (generalizing the work of Schoen and Yau \cite{SY:condensation}):
\begin{theorem}[\cite{ShenYe,SY:general}]
Suppose that $\Sigma^n\subset (M^{n+1},g)$ is a two-sided stable minimal hypersurface in a manifold with $\biRic \geq 1$. Then for $n+1 \leq 5$ each component of $\Sigma$ has diameter $\leq c(n)$. 
\end{theorem}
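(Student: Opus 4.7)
The plan is to combine the spectral Ricci positivity on $\Sigma$ provided by Theorem~\ref{thm:SY-biRic-stab} with a Bonnet--Myers-type argument along length-minimizing geodesics. I would first sharpen Theorem~\ref{thm:SY-biRic-stab}: rerunning its proof but retaining $\Ric_\Sigma(X, X)$ for an \emph{arbitrary} unit tangent vector field $X$ on $\Sigma$ (rather than only the pointwise smallest Ricci eigenvalue), the Gauss equation gives
\[
\Ric_\Sigma(X, X) + |A(X, \cdot)|^2 = \biRic(X, \nu) - \Ric_M(\nu, \nu);
\]
combined with $|A|^2 \geq |A(X, \cdot)|^2$ and $\biRic \geq 1$, the stability inequality~\eqref{eq:stable-min} yields
\begin{equation}\label{eq:diambound-spec-X}
\int_\Sigma \varphi^2 \leq \int_\Sigma |\nabla \varphi|^2 + \int_\Sigma \Ric_\Sigma(X, X) \varphi^2 \qquad \forall \varphi \in C^\infty_c(\Sigma)
\end{equation}
for every smooth unit vector field $X$ on $\Sigma$.

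Now let $\Sigma_0$ be a component of $\Sigma$. When $n=2$, the induced Ricci tensor equals $K \cdot g$, so \eqref{eq:diambound-spec-X} reduces to the spectral Gauss inequality $\int \varphi^2 \leq \int |\nabla \varphi|^2 + K \varphi^2$ and the diameter bound follows from the Schoen--Yau condensation theorem (Section~\ref{sec:spect}) with $\alpha = 1 < 4$. The substantive cases are $n \in \{3, 4\}$, where I would set up a spectral Bonnet--Myers argument. Suppose for contradiction that $\diam(\Sigma_0) > L_0 = L_0(n)$, to be chosen, and let $\gamma: [0, L] \to \Sigma_0$ be a length-minimizing geodesic with $L > L_0$. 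Fix parallel orthonormal fields $e_1, \dots, e_{n-1}$ along $\gamma$ perpendicular to $\gamma'$, and take $X$ to be the unit vector field extending $\gamma'$ by parallel transport along normal geodesics on a tubular neighborhood of $\gamma$ (extended arbitrarily elsewhere). The second variation of arc length with variations $V_i = f(t) e_i(t)$, summed over $i$, gives the index-form inequality
\[
\int_0^L f^2 \Ric_\Sigma(\gamma', \gamma') \, dt \leq (n-1) \int_0^L (f')^2 \, dt.
\]

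Next, I would apply \eqref{eq:diambound-spec-X} to a test function $\varphi(x) = f(t(x)) \psi(r(x))$ supported in a Fermi tube $U_\rho(\gamma)$, where $t, r$ are tubular coordinates and $\psi$ is the first Dirichlet eigenfunction of a normal $(n-1)$-disk of radius $\rho$. After expanding the integrals using the Fermi volume form, an optimal choice of $\rho$ together with the index-form inequality above should yield a one-dimensional inequality of the form $\int_0^L f^2 \, dt \leq c_n \int_0^L (f')^2 \, dt$ with $c_n$ finite. Testing with $f(t) = \sin(\pi t / L)$ then forces $L^2 \leq \pi^2 c_n$, contradicting $L > L_0$ for $L_0$ large.

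The hard part is the concentration step: the first Dirichlet eigenvalue of the normal disk of radius $\rho$ scales like $1/\rho^2$ and threatens to dominate \eqref{eq:diambound-spec-X} as $\rho \to 0$. Absorbing this blow-up requires balancing the normal-profile Dirichlet energy against the small coefficient $1$ in front of $|\nabla \varphi|^2$ on the right of \eqref{eq:diambound-spec-X}, in exact analogy with the threshold $\alpha < 4$ in the Schoen--Yau surface condensation. The dimensional restriction $n \leq 4$ (i.e.\ $n+1 \leq 5$) enters precisely at the point where the factor $(n-1)$ from the index form remains below the critical constant for this concentration argument; in higher dimensions the competing energies go the wrong way and the method breaks down.
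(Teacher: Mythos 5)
The paper does not give a proof of this theorem; it is cited to Shen--Ye. Your proposal correctly identifies two ingredients that do appear in that circle of ideas: the $X$-direction spectral Ricci inequality \eqref{eq:diambound-spec-X} (your Gauss-equation identity checks out, and in fact \eqref{eq:diambound-spec-X} is simply a consequence of the paper's stronger version with $\lambda_{\Ric,\Sigma}$ since $\lambda_{\Ric,\Sigma}\leq \Ric_\Sigma(X,X)$ pointwise), and the traced index-form inequality $\int_0^L \Ric_\Sigma(\gamma',\gamma')f^2\leq (n-1)\int_0^L (f')^2$. Both of those steps are correct.

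The concentration step, however, is a genuine gap, not a detail. If you normalize the profile $\psi$ on the normal $(n-1)$-disk of radius $\rho$ so that $\int\psi^2 = 1$, then $\int|\psi'|^2\sim \mu_1(n-1)\rho^{-2}$, so after dividing through by the transverse mass the spectral inequality degenerates to something of the form $\fint f^2 \leq \fint(f')^2 + C\rho^{-2}\fint f^2 + (\text{curvature term})$, which is vacuous as $\rho\to 0$ for \emph{every} $n$; there is no threshold in $n$ here. If instead you keep $\rho$ bounded away from $0$, then the curvature term $\int_\Sigma \Ric_\Sigma(X,X)\varphi^2$ integrates the Ricci tensor over a tube of definite thickness, which the index form along $\gamma$ does not control (it only sees $\Ric_\Sigma(\gamma',\gamma')$ \emph{on} $\gamma$), and you have no a priori bounds on how the curvature varies across the tube, nor on the Fermi volume density, nor on the focal radius needed to even define the tube. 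You flag this as ``the hard part,'' but the assertions about ``optimal $\rho$,'' ``balancing,'' and ``the dimensional restriction entering here'' do not close the gap. Note also that if the tube argument closed cleanly it would give a diameter bound in all dimensions, but Xu's examples (cited in the paper) show the bound fails for $n+1>5$, so whatever makes the argument work must genuinely use $n\leq 4$ at a place the sketch does not reach. The Shen--Ye/Schoen--Yau mechanism is different: one plugs a \emph{globally defined radial} test function $\varphi=f(d_\Sigma(p,\cdot))$ into the spectral inequality and estimates $\Ric_\Sigma(\partial_r,\partial_r)$ via the Bochner/Riccati inequality $\Ric_\Sigma(\partial_r,\partial_r)\leq -\partial_r(\Delta_\Sigma r)-\tfrac{1}{n-1}(\Delta_\Sigma r)^2$, then integrates by parts in the co-area/polar decomposition; the dimension threshold comes out of the competition between the $\tfrac{1}{n-1}$ coefficient in the Riccati term and the cross-terms produced by the integration by parts, not from any tube concentration.
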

 Surprisingly, K.\ Xu constructed examples \cite{Xu:counterex} where the diameter bound fails for $n+1> 5$. 
 
 The bi-Ricci curvature was generalized by Brendle, Hirsch, and Johne to a class of curvatures interpolating between Ricci and scalar curvature (called the $m$-intermediate curvatures) defined by 
 \[
 \mathcal{C}_m(e_1,\dots,e_m) : = \sum_{p=1}^m \sum_{q=p+1}^n \sec(e_p,e_q).
 \]
 (Note that $\mathcal{C}_2$ is the bi-Ricci curvature.) Using a version of inductive descent (cf.\ Section \ref{sec:induct})  they proved:
\begin{theorem}[{\cite[Corollary 1.6]{BHJ}}]
For $n\leq 7$ and $1\leq m\leq n-1$, let $M^{n-m}$ be a closed manifold. Then $M^{n-m}\times T^m$ does not admit a metric with $\mathcal{C}_m > 0$. 
\end{theorem}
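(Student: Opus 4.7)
The strategy is to perform an inductive descent in the spirit of Theorem \ref{thm:geroch-high-dim}, but with $m$-intermediate curvature replacing scalar curvature and the Shen--Ye computation of Theorem \ref{thm:SY-biRic-stab} serving as the base case. Write $N = M^{n-m}\times T^m$ with coordinates $x^1,\dots,x^m$ on the $T^m$ factor, and suppose for contradiction that $N$ carries a metric $g$ with $\mathcal{C}_m\geq 1$ after rescaling.

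Since $[dx^1]|_N \neq 0\in H^1(N;\RR)$, Theorem \ref{theo:minimize} (valid because $n\leq 7$) produces a smooth two-sided area-minimizing representative $\Sigma_1\subset N$ of the Poincar\'e-dual class in $H_{n-1}(N;\ZZ)$. Because $[dx^2\wedge\cdots\wedge dx^m]|_{\Sigma_1}$ remains nonzero in cohomology, we may further minimize inside $\Sigma_1$ and iterate, producing a flag $\Sigma_{m-1}\subset\Sigma_{m-2}\subset\cdots\subset\Sigma_1\subset N$ of two-sided stable minimal hypersurfaces with $\dim\Sigma_k = n-k$ and $[dx^{k+1}|_{\Sigma_k}]\neq 0$ in $H^1$.

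The heart of the proof is a curvature-descent lemma: if $\Sigma^{d-1}\subset(X^d,g_X)$ is a two-sided stable minimal hypersurface and $\mathcal{C}_m^X\geq 1$, then the induced metric on $\Sigma$ enjoys (spectral) positivity of $\mathcal{C}_{m-1}$. Following Shen--Ye, for an orthonormal frame $e_1,\dots,e_{m-1}$ on $\Sigma$ and unit normal $\nu$, the Gauss equations rewrite each $\sec_\Sigma(e_p,e_q)$ as $\sec_X(e_p,e_q)$ plus a $2\times 2$ minor of $A$; combining these with the additional ambient terms $\sec_X(e_p,\nu)$ for $p=1,\dots,m-1$ exposes exactly $\mathcal{C}_m^X(e_1,\dots,e_{m-1},\nu)\geq 1$, leaving a quadratic in $A$ that is controlled by $|A|^2$ using $\tr A = 0$ and AM--GM. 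Inserting \eqref{eq:stable-min} then packages this into spectral $\mathcal{C}_{m-1}^\Sigma\geq 1$; the clean way to iterate without carrying spectral bookkeeping along is to pass to a warped product $\Sigma\times S^1$ with warping given by the first eigenfunction of the stability operator, converting the spectral condition to a pointwise one in the same dimension. The main obstacle is the combinatorial absorption of the $|A|^2$ terms, and it is precisely here that the hypothesis $m\leq n-1$ should enter to make the numerology close.

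After $m-1$ iterations the resulting $\Sigma_{m-1}$ of dimension $n-m+1\geq 2$ carries spectral positivity of $\mathcal{C}_1=\Ric$. Since $\dim\Sigma_{m-1}\leq n\leq 7$, the spectral Bonnet--Myers principle recorded in Section \ref{sec:spect} forces $\Sigma_{m-1}$ to be compact with finite fundamental group, since the hypothesis lifts to the universal cover and constrains its diameter. But the class $[dx^m|_{\Sigma_{m-1}}]$ is nonzero in $H^1(\Sigma_{m-1};\RR)$ by construction, giving a degree-nonzero map $\Sigma_{m-1}\to S^1$ and hence a surjection $\pi_1(\Sigma_{m-1})\twoheadrightarrow\ZZ$. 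This contradicts finiteness of $\pi_1(\Sigma_{m-1})$ and completes the proof.
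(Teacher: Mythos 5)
Your high-level strategy — inductive minimization down the $T^m$ factor and a ``$\mathcal{C}_m\to\mathcal{C}_{m-1}$'' curvature-descent lemma — is indeed the spirit of the BHJ proof the survey alludes to, and matches both the Geroch descent ($\mathcal{C}_n = R/2$ descending to $\mathcal{C}_{n-1}$) and the Shen--Ye base case ($\mathcal{C}_2 = \biRic$ descending to $\mathcal{C}_1 = \Ric$). However, there are several places where the proposal papers over exactly the content of the theorem.

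First, the curvature-descent lemma is the crux, and your sketch of it does not close. After the Gauss equations one gets
\[
\mathcal{C}_{m-1}^\Sigma(e_1,\dots,e_{m-1}) = \mathcal{C}_m^X(e_1,\dots,e_{m-1},\nu) - \Ric_X(\nu,\nu) + Q(A),
\]
with $Q(A) = \sum_{p=1}^{m-1}\sum_{q=p+1}^{d-1}\bigl(A(e_p,e_p)A(e_q,e_q) - A(e_p,e_q)^2\bigr)$, and the question is whether $|A|^2 + Q(A) \geq 0$ (so that the stability inequality absorbs cleanly). For $m=2$ this is exactly the Shen--Ye computation and it works with the sharp constant $1$. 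For general $m$ the sign of $\sum_{p<q,\,p\leq m-1} A(e_p,e_p)A(e_q,e_q)$ is not controlled by ``AM--GM and $\tr A = 0$'' alone, and the constant you get (and whether it is $<1$) is precisely where BHJ's numerology lives. Relatedly, $m\leq n-1$ is essentially vacuous (it just says $M^{n-m}$ is a manifold of dimension $\geq 1$); the real numerical constraint is encoded in $n\leq 7$, which in BHJ simultaneously guarantees regularity of the minimizers \emph{and} makes the constant in the absorption work. Attributing the numerology to $m\leq n-1$ is misleading.

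Second, the warped-product conversion from spectral to pointwise $\mathcal{C}_{m-1}$ is not innocuous. For scalar curvature the trick $g + w^2\,d\theta^2$ works because $R$ of a warped product only sees $\Delta w / w$. But $\mathcal{C}_{m-1}$ of the warped product sees individual mixed sectional curvatures $\sec(e_i, \partial_\theta) = -(\Hess w)(e_i,e_i)/w$, which are controlled by the \emph{Hessian} of $w$, not just its Laplacian; the first eigenfunction of the stability operator does not have sign control on its Hessian. This is why BHJ (following Schoen--Yau) carry the weight along as a multiplicative density in a ``weighted slicing'' rather than geometrizing it as a warped product and then iterating pointwise.

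Third, the terminal contradiction needs more care. You are right that $\Sigma_{m-1}$ is closed so its diameter is finite; the real claim is that spectral $\Ric > 0$ kills $b_1$. This does follow from a Bochner argument with the improved Kato inequality, but it requires the spectral coefficient $\alpha$ in $-\alpha\Delta + \lambda_{\Ric} \geq 1$ to be small enough (roughly $\alpha \leq n/(n-1)$), and your descent does not track how $\alpha$ evolves under iteration. Given the paper's own remarks that Shen--Ye's diameter bound from $\biRic\geq 1$ fails for $n+1>5$ (Xu's counterexample), invoking a ``spectral Bonnet--Myers principle'' without specifying the coefficient is exactly the kind of step that can silently fail in the middle dimensions.

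In summary: the skeleton is right, but the proof's content is precisely the coefficient bookkeeping in the descent, the iteration mechanism (weights, not warping), and the quantitative spectral Bochner at the bottom — all of which are left as ``should close'' in your write-up.
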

These techniques have allowed for many results concerning minimal surfaces in positive scalar curvature to be extended to higher dimensional manifolds with positive $\mathcal{C}_m$ (for appropriate $m$), see e.g. \cite{MWY:intermediateasph}.

\section{The (stable) Bernstein problem.} 
We now discuss the Bernstein problem and its modern variants. As we'll explain, major progress in this area came from the discovery of links with the previous sections.

We will say that $M^n\subset \bR^{n+1}$ is an \emph{entire minimal graph} if $M$ is a minimal hypersurface and a graph of $u : \bR^n\to\bR$. One may easily check that this is equivalent to requiring that $u$ satisfies the minimal surface equation
\begin{equation}\label{eq:MSE}
\textrm{div} \left( \frac{\nabla u}{\sqrt{1+|\nabla u|^2}} \right) = 0.
\end{equation}
This may be regarded as a nonlinear analogue of the Laplace equation $\Delta u = 0$. In 1914, Bernstein proved \cite{Bernstein} that an entire solution to \eqref{eq:MSE} on $\bR^2$ is affine. This can be regarded as a non-linear version of Liouville's theorem and is particularly remarkable since no boundedness assumption on $u$ is required. It is natural to ask about higher dimensions. A remarkable series of works in geometric measure theory resolved this in the 1960s:
\begin{theorem}\label{theo:bernstein}
For $n \leq 7$, a minimal graph $u : \bR^n\to \bR$ is affine. On the other hand, for $n\geq 8$, there are non-flat minimal graphs on $\bR^{n}$.
\end{theorem}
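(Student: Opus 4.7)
My plan is to split the proof into the two directions. The positive half uses the Fleming--De Giorgi reduction of Bernstein to the classification of area-minimizing hypercones, combined with Simons' theorem. The negative half constructs the Bombieri--De Giorgi--Giusti counterexample from the Simons cone.

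\textbf{Positive direction ($n\le 7$).} Let $u:\RR^n\to\RR$ solve \eqref{eq:MSE} and set $M=\Graph(u)\subset\RR^{n+1}$. First I would record the classical calibration argument: the constant vertical vector field $\partial_{n+1}$ extends the upward unit normal to $M$ and is divergence-free, so the $n$-form $\iota_{\partial_{n+1}}(dx^1\wedge\cdots\wedge dx^{n+1})$ calibrates $M$, making $M$ area-minimizing (not merely stable). Next, I would invoke the Fleming--De Giorgi reduction: if $u$ is non-affine then a suitable blow-down of (the subgraph of) $u$, extracted using compactness of area-minimizing boundaries in the flat topology, produces a non-planar area-minimizing $(n-1)$-dimensional hypercone $C\subset\RR^n$. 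The crucial gain over naive blow-down is the dimension reduction from $n$-cones in $\RR^{n+1}$ down to $(n-1)$-cones in $\RR^n$, achieved by working with level sets of $u$ rather than the graph itself. Finally I would invoke Simons' theorem: every area-minimizing $k$-dimensional hypercone in $\RR^{k+1}$ with $k\le 6$ is a hyperplane. Applied with $k=n-1\le 6$, i.e.\ $n\le 7$, this forces $C$ to be flat, a contradiction.

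\textbf{Negative direction ($n\ge 8$).} It suffices to construct a non-affine entire minimal graph on $\RR^8$; the cases $n>8$ follow by adding dummy variables. The starting point is the Simons cone
\[
C_{3,4}=\{(x,y)\in\RR^4\times\RR^4 : |x|^2=|y|^2\}\subset\RR^8,
\]
which Simons showed to be stable. Bombieri, De Giorgi, and Giusti upgrade this to minimizing by constructing, on the region $\Omega=\{|x|^2>|y|^2\}$, a pair of $SO(4)\times SO(4)$-invariant sub- and supersolutions $u_-\le u_+$ of \eqref{eq:MSE}, both vanishing on $C_{3,4}$ and growing at a controlled rate at infinity. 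Solving a sequence of Dirichlet problems on balls $B_R\subset\RR^8$ with boundary data sandwiched between $u_-$ and $u_+$, and using these as barriers to pass to the limit $R\to\infty$, produces an entire minimal graph $u:\RR^8\to\RR$ that is strictly positive on $\Omega$ and vanishes on $C_{3,4}$, hence is certainly not affine.

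\textbf{Main obstacle.} The technical crux is the construction of the barriers $u_\pm$: it reduces to a delicate nonlinear ODE analysis of the MSE restricted to $SO(4)\times SO(4)$-invariant functions (a scalar problem in the variable $t=|x|^2-|y|^2$), and producing sub/supersolutions with the correct behavior at the cone and at infinity uses the stability of $C_{3,4}$ in an essential way (essentially Simons' inequality on the link). On the positive side, the subtle dimensional point is the Fleming--De Giorgi reduction itself: naive blow-down of a minimal graph in $\RR^{n+1}$ only yields an $n$-dimensional cone in $\RR^{n+1}$, for which Simons gives flatness only up to $n\le 6$; gaining the extra dimension required to cover $n=7$ needs the level-set refinement.
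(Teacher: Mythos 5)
Your proposal follows the same five-step outline as the paper (calibration to get area-minimizing, Fleming's blow-down, De Giorgi's dimension reduction, Simons' non-existence of low-dimensional singular minimizing cones, and the Bombieri--De Giorgi--Giusti construction for $n\geq 8$), so in that sense it is correct. One small imprecision worth fixing: the gain of one dimension is not ``achieved by working with level sets of $u$.'' Fleming's blow-down gives an $n$-dimensional minimizing cone $C\subset\RR^{n+1}$; De Giorgi's separate theorem then shows that this cone (being a blow-down of a subgraph) splits as a vertical cylinder $C=C'\times\RR$, and it is $C'\subset\RR^n$ that Simons' theorem is applied to. The splitting comes from the sub-graphical structure of the blow-down (equivalently, sign control on the vertical component of the normal), not from slicing $u$ by its values; as written your mechanism, if taken literally, would not produce a minimizing cone. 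This doesn't affect the logic since you are citing both Fleming and De Giorgi rather than reproving them, but the attribution of where the extra dimension comes from should be corrected.
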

For later reference, we briefly outline the steps of the proof:
\begin{enumerate}
\item If $M\subset \bR^{n+1}$ is a minimal graph then a calibration argument proves that it's area-minimizing (and thus stable). 
\item Using ``monotonicity of area ratios'' for minimal surfaces and compactness for least area hypersurfaces, Flemming  \cite{Fleming} proved that if $u : \bR^{n}\to \bR$ is a non-affine solution to the minimal surface equation then there's a non-flat area-minimizing hypercone $C\subset \bR^{n+1}$. 
\item De Giorgi proved \cite{DeGiorgi:bernstein} that the hypercone actually splits $C=C'\times \bR$ and thus there's a non-flat area-minimizing hypercone $C\subset \bR^{n}$. 
\item Generalizing work \cite{Almgren:bernstein} of Almgren, Simons used the stability inequality to prove \cite{Simons:minimalVarieties} that there are no non-flat area-minimizing hypercones in $\bR^n$ for $n\leq 7$. 
\item Bombieri, De Giorgi, and Giusti \cite{BombieriDeGiorgiGiusti} constructed a non-affine graph over $\bR^8$. 
\end{enumerate}

We note that the non-existence non-flat of area-minimizing hypercones (as in the fourth point above) is precisely the reason for the regularity of area minimizers in low dimensions (Theorem \ref{theo:minimize}). 

A natural generalization of the Bernstein problem is to drop the graphical condition. If we retain the area-minimizing assumption then the problem was completely resolved by the previously mentioned work:
\begin{theorem}
If $\Sigma^n\subset \bR^{n+1}$ is a complete area-minimizing hypersurface and $n+1 \leq 7$ then $\Sigma$ is flat. For $n+1\geq 8$ there are non-flat examples. 
\end{theorem}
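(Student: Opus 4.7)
The plan is to reprise the blow-down argument behind Theorem \ref{theo:bernstein}, now with the graphical hypothesis replaced by a tangent-cone-at-infinity analysis. First, given $\Sigma^n \subset \bR^{n+1}$ complete and area-minimizing, I would rescale by $\lambda_i^{-1}$ for $\lambda_i \to \infty$ and extract a tangent cone at infinity $C \subset \bR^{n+1}$. The GMT compactness theorem for area-minimizing boundaries (the same machinery underlying Theorem \ref{theo:minimize}) provides subsequential limits that are again area-minimizing, and monotonicity of area ratios forces such a limit to be a cone with vertex at the origin. Unlike in the graphical Bernstein setting, no De Giorgi-style splitting off a vertical factor is needed: $C$ already lives in $\bR^{n+1}$ with the right codimension, so one can feed it directly into Simons' theorem.

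Next, Simons' classification (step (4) in the outline of Theorem \ref{theo:bernstein}) gives that for $n \leq 6$, i.e.\ $n+1 \leq 7$, every area-minimizing hypercone in $\bR^{n+1}$ is a hyperplane. Using that $\Sigma$ is a reduced boundary of locally finite perimeter, $C$ is a hyperplane with multiplicity one, so the density of $\Sigma$ at infinity equals $1$. At an arbitrary $x \in \Sigma$, the monotonicity formula then says $r \mapsto \omega_n^{-1} r^{-n} \cH^n(\Sigma \cap B_r(x))$ is non-decreasing in $r$, is at least $1$ for small $r$, and tends to $1$ as $r \to \infty$; hence it is identically $1$. The equality case of monotonicity forces $\Sigma$ to be a cone with vertex at every one of its points, which means $\Sigma$ is a hyperplane.

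For the counterexamples in $n+1 \geq 8$, I would invoke the Bombieri--De Giorgi--Giusti construction \cite{BombieriDeGiorgiGiusti} (step (5) in the Bernstein outline): for every $m \geq 8$ they produce non-affine entire solutions $u : \bR^m \to \bR$ of \eqref{eq:MSE}, whose graphs are area-minimizing by calibration. This yields smooth complete non-flat area-minimizing hypersurfaces in every $\bR^{n+1}$ with $n+1 \geq 9$. The borderline case $n+1 = 8$ is covered by the Simons cone $\{x \in \bR^8 : x_1^2 + \dots + x_4^2 = x_5^2 + \dots + x_8^2\}$, shown in the same paper to be area-minimizing; it is non-flat, complete in the standard GMT sense, and singular only at the origin.

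The hard step is Simons' theorem itself, which is exactly the dimensional bottleneck also controlling Theorem \ref{theo:minimize} and Theorem \ref{theo:bernstein}. The supporting GMT inputs---compactness of area-minimizing currents, monotonicity and its rigidity, and the multiplicity-one conclusion for the tangent cone (relying on the perimeter-minimizing structure rather than general stationary varifold limits)---are standard but each requires careful citation, especially the multiplicity-one claim, which would otherwise allow a higher-multiplicity hyperplane and break the rigidity step.
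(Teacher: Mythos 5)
Your proof is correct and follows the standard route that the paper implicitly invokes: blow down $\Sigma$ to a tangent cone at infinity $C$, use that area-minimizing boundaries converge with multiplicity one, invoke Simons' non-existence of non-flat minimizing hypercones in $\bR^{n+1}$ for $n+1 \leq 7$, and then conclude via the rigidity case of the monotonicity formula. The paper does not spell out a proof---it merely states that the result follows from Fleming's blow-down, Simons' theorem, and the Bombieri--De~Giorgi--Giusti examples, with the remark that De Giorgi's splitting is unavailable without graphicality---and your write-up fills in exactly that argument, including the correct identification of multiplicity one as the point that would fail for general stable (non-minimizing) hypersurfaces.
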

The shift in dimensions is because De Giorgi's splitting result does not hold without graphicality. 

By replacing ``area-minimizing'' with ``stable'' we obtain the \emph{stable Bernstein problem}: If $\Sigma^n\to \RR^{n+1}$ is a complete two-sided stable minimal immersion, is it flat? One important reason to study the stable Bernstein problem is that it is equivalent to \emph{a priori} curvature estimates for stable minimal surfaces in Riemannian manifolds. 

In $\bR^3$ this was resolved in 1980 by Fischer-Colbrie and Schoen \cite{Fischer-Colbrie-Schoen}, do Carmo and Peng \cite{doCarmoPeng}, and Pogorelov \cite{Pogorelov}. These proofs leverage the two-dimensionality of $M$ in various strong ways (uniformization, negative curvature of a minimal surface in $\bR^3$).

In higher dimensions, well-known work of Schoen, Simon, and Yau, Schoen and Simon, and a recent breakthrough by Bellettini  resolves the stable Bernstein problem assuming volume growth bounds. (See also the recent works \cite{CFRFS,FSS} for stable Bernstein results for other geometric problems.)
\begin{theorem}[\cite{SSY,SchoenSimon,Bellettini}]\label{theo:SSY-etc}
Suppose that $\varphi : \Sigma^n \to \bR^{n+1}$ is a complete two-sided stable minimal immersion that satisfies extrinsic volume growth estimates $\area(\varphi(\Sigma) \cap B_R) \leq O(R^n)$ for all $R>0$. If $n+1\leq 7$ then $\Sigma$ is flat. With only intrinsic volume growth estimates $\area(B_R^\Sigma) \leq O(R^n)$, the same conclusion holds for $n+1\leq 6$. 
\end{theorem}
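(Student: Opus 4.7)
The plan is to combine the stability inequality with Simons' identity and an improved Kato inequality, and then close the argument with a Sobolev inequality whose available form governs the admissible dimension range.

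First, I would test the stability inequality \eqref{eq:stable-min} with $\varphi = \eta\,|A|^{1+q}$, where $\eta$ is a smooth compactly supported cutoff and $q\ge 0$ is a free parameter. Since $\Sigma\subset \bR^{n+1}$ the ambient Ricci term vanishes, so after expanding $|\nabla\varphi|^2$ one obtains
\[
\int_\Sigma |A|^{4+2q}\eta^2 \;\le\; (1+q)^2\!\int_\Sigma |A|^{2q}|\nabla |A||^2\,\eta^2 \;+\; \text{cross terms involving }|\nabla\eta|.
\]

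Second, I would invoke Simons' identity, which for a minimal hypersurface in $\bR^{n+1}$ reads $|A|\Delta|A|=|\nabla A|^2-|\nabla|A||^2-|A|^4$, and combine it with the refined Kato inequality $|\nabla A|^2\ge (1+\tfrac{2}{n})|\nabla|A||^2$ specific to codimension-one minimal immersions. Multiplying by $|A|^{2q}\eta^2$ and integrating by parts yields a bound on $\int_\Sigma |A|^{2q}|\nabla|A||^2\eta^2$ in terms of $\int_\Sigma |A|^{4+2q}\eta^2$ and $|\nabla\eta|$-terms. Substituting this back into the previous display produces the Caccioppoli-type estimate
\[
\int_\Sigma |A|^{4+2q}\eta^2 \;\le\; C(n,q)\!\int_\Sigma |A|^{2+2q}|\nabla\eta|^2,
\]
valid whenever $q$ lies in an admissible range where the resulting algebraic coefficient is strictly positive. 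This range is nonempty precisely for small $n$: for codimension one it covers $n\le 5$ with margin (Schoen--Simon--Yau) and, with a more careful choice of test exponent, extends to $n=6$ (Schoen--Simon).

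Third, I would promote the Caccioppoli estimate to a decay statement via a Sobolev inequality and iteration. In the extrinsic case, the Michael--Simon inequality refers only to ambient balls, so it cooperates directly with the hypothesis $\area(\varphi(\Sigma)\cap B_R)\le C R^n$; choosing $\eta$ supported in an extrinsic ball of radius $2R$ with $|\nabla\eta|\le C/R$, a Moser-type iteration combined with the volume bound forces the right-hand side to zero as $R\to\infty$, hence $|A|\equiv 0$ and $\Sigma$ is a hyperplane. In the intrinsic case, Michael--Simon is unavailable, and one must substitute a purely intrinsic Sobolev inequality; following Bellettini, a sharper intrinsic estimate can be extracted, at the cost of one dimension, which accounts for the restriction $n+1\le 6$.

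The hard part is the second step: the algebraic balance between the Kato-improved Simons estimate and the stability inequality is delicate at the borderline dimension, and optimizing the exponent $q$ against the Kato constant $2/n$ is exactly what pins down the admissible dimension. The intrinsic variant is more delicate still, since the absence of Michael--Simon means one cannot absorb the $|\nabla \eta|$ term cheaply; Bellettini's refinement of the Caccioppoli inequality is what squeezes out the extra dimension and closes the argument up to $n+1\le 6$.
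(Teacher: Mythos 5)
The paper does not actually prove Theorem \ref{theo:SSY-etc}; it is cited from \cite{SSY,SchoenSimon,Bellettini} and used as a black box (e.g.\ in the proof of Theorem \ref{thm:R4stab}). So I can only assess whether your sketch accurately reconstructs the argument in those references.

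Your description of the Schoen--Simon--Yau mechanism is essentially correct: test the stability inequality with $\varphi=\eta|A|^{1+q}$, combine with Simons' identity $|A|\Delta|A|=|\nabla A|^2-|\nabla|A||^2-|A|^4$ and the codimension-one Kato inequality $|\nabla A|^2\geq (1+\tfrac{2}{n})|\nabla|A||^2$, absorb to get the Caccioppoli estimate, and close using Euclidean volume growth. But there is a significant error in how you attribute the $n+1\leq 7$ case. You cannot reach $n=6$ by ``a more careful choice of test exponent.'' The admissible exponent range in SSY is roughly $p\in[4,4+\sqrt{8/n})$, and for the iteration to close one needs $p\geq n$; this is satisfiable for $n\leq 5$ but fails outright at $n=6$ since $4+\sqrt{8/6}<6$. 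The algebraic balance does not merely become ``delicate at the borderline''---it is simply lost. Schoen--Simon \cite{SchoenSimon} pushes to $n+1\leq 7$ by an entirely different method: an $\varepsilon$-regularity / tilt-excess decay argument via harmonic approximation together with dimension reduction, with the dimensional threshold coming from Simons' nonexistence of non-flat stable minimal hypercones in $\bR^{n+1}$ for $n+1\leq 7$, not from a re-optimization of the $|A|^{1+q}$ trick. This is also why the extrinsic hypothesis is natural for Schoen--Simon (the excess and tilt quantities are intrinsically extrinsic), whereas the SSY/Bellettini Caccioppoli route is what cooperates with intrinsic volume bounds. Your description of the intrinsic $n+1\leq 6$ case as following from a refined Caccioppoli argument of Bellettini via De Giorgi iteration is in the right spirit, though note that \cite{Bellettini} uses De Giorgi iteration rather than Moser iteration as you suggest.

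In short: Steps 1--3 of your outline are a faithful account of SSY for $n+1\leq 6$, but the claim that Schoen--Simon is a sharper instance of the same test-function computation is wrong, and it would leave you unable to explain why the dimension threshold in the extrinsic case is exactly $7$ (that number comes from Simons' cone classification, which your sketch never invokes).
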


Thus, the resolution of the stable Bernstein problem becomes a question of a more geometric nature: Can the stability inequality can be used to derive volume growth bounds for $\Sigma^n\to\bR^{n+1}$? This geometric problem has recently seen considerable activity and is currently almost completely resolved (only the case $M^6\to\bR^7$ remains unsolved). With C.\ Li, we solved the $M^3\to\bR^4$ case \cite{ChodoshLi:stableR4}. Later we discovered a second proof \cite{ChodoshLi:anisoR4} and shortly after a third proof was discovered by Catino, Mastrolia, and Roncoroni \cite{CMR}. Our first proof was based on techniques related to nonnegative scalar curvature (cf.\ \cite{MunteanuWang,Stern}) while our second proof was based on techniques related to strictly positive scalar curvature. Finally, the Catino--Mastrolia--Roncoroni proof was based on Bakry--\'Emery Ricci curvature. We describe here the proof from \cite{ChodoshLi:anisoR4} based on strict positivity of scalar curvature. 
\begin{theorem}\label{thm:R4stab}
If $\Sigma^3\to\bR^4$ is a complete two-sided stable minimal immersion then it's flat. 
\end{theorem}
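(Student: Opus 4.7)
By Theorem~\ref{theo:SSY-etc}, since $n+1 = 4 \leq 6$, flatness of $\Sigma$ follows once we establish the intrinsic volume growth bound
\[
\vol(B_R^\Sigma) \leq C R^3
\]
for all $R \geq 1$, where $B_R^\Sigma$ denotes the intrinsic geodesic ball of the induced metric $g_\Sigma$. The entire proof therefore reduces to proving this cubic bound; no further extrinsic geometry of $\Sigma \subset \RR^4$ is needed after this reduction.

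The engine for the volume estimate is spectral positivity of scalar curvature on $\Sigma^3$. For a two-sided minimal $\Sigma \subset \RR^4$, the Gauss identity gives $R_\Sigma = -|A|^2 \leq 0$, while stability reads $\int_\Sigma |A|^2 \varphi^2 \leq \int_\Sigma |\nabla \varphi|^2$ for $\varphi \in C^\infty_c(\Sigma)$. The plan is to combine these two pieces of information, via a weighted test function of the form $\varphi = u\eta$ where $u > 0$ solves a Jacobi-type weighted eigenvalue equation on $\Sigma$ and $\eta$ is a cutoff, to obtain an inequality
\[
\int_\Sigma \left( R_{\tilde g} + c \right)\eta^2 \leq C \int_\Sigma |\nabla \eta|^2
\]
for some $c>0$ and a conformally changed metric $\tilde g = e^{2\psi} g_\Sigma$ which remains uniformly bi-Lipschitz to $g_\Sigma$ on unit scales. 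In other words, $(\Sigma,\tilde g)$ is a complete $3$-manifold with \emph{strictly spectrally positive} scalar curvature. The Simons identity for $\Delta |A|^2$, together with the improved Kato inequality for minimal hypersurfaces in dimension three, are what make the coefficients balance.

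Once strict spectral positivity on $(\Sigma,\tilde g)$ is established, I apply Theorem~\ref{theo:mu-bubble-annulus} intrinsically: inside any intrinsic annular region $B_{r+L}^\Sigma \setminus B_r^\Sigma$ of width $L=L(c)$, construct a $\mu$-bubble $\Sigma' \subset \Sigma$, a two-dimensional separating hypersurface whose induced metric has spectrally positive Gaussian curvature. By the Schoen--Yau spectral Gauss--Bonnet argument of Section~\ref{sec:spect}, each component of $\Sigma'$ has uniformly bounded diameter and area. Iterating the $\mu$-bubble construction at nested scales as in the slice-and-dice argument of \cite{ChodoshLi2020generalized}, together with a coarea inequality between successive bubbles, gives the cubic intrinsic volume growth $\vol(B_R^\Sigma) \leq C R^3$. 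An appeal to Theorem~\ref{theo:SSY-etc} then completes the proof.

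The hard part is the first step: extracting \emph{strict} spectral positivity from stability when the pointwise scalar curvature $R_\Sigma = -|A|^2$ has the \emph{wrong} sign. One must exploit the $\int |A|^2 \varphi^2$ term in the stability inequality and a judicious conformal change to flip the sign, and this only closes because of the dimension-three coefficients in the Simons and Kato inequalities. This is precisely why the argument works in $\RR^4$ but does not immediately extend to $\Sigma^n \to \RR^{n+1}$ for $n \geq 4$, where (as the paper notes) the stable Bernstein problem remains partly open.
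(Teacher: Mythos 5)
Your high-level architecture matches the paper's: establish strict spectral positivity of scalar curvature on a conformal change of $(\Sigma,g)$, run $\mu$-bubbles to extract small separating surfaces, convert that to intrinsic cubic volume growth, and close via Theorem~\ref{theo:SSY-etc}. But the engine you propose for step one is wrong in a way that the rest of the argument cannot survive, and you explicitly build the error into your setup when you write that ``no further extrinsic geometry of $\Sigma \subset \RR^4$ is needed after this reduction.''

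The issue is that stability alone gives only \emph{nonnegativity} of $-\Delta_\Sigma + R_\Sigma$, and there is no intrinsic upgrade of this to strict spectral positivity: the flat hyperplane $\RR^3 \subset \RR^4$ is stable with equality, so no amount of Simons identity, improved Kato, or Jacobi-field test functions $\varphi = u\eta$ can manufacture a strictly positive constant $c$ from stability in a scale-invariant, bi-Lipschitz-on-unit-scales conformal class. (Simons/Kato are the ingredients of the Schoen--Simon--Yau curvature estimate, i.e.\ of Theorem~\ref{theo:SSY-etc} itself, which is exactly the black box you are trying to feed -- so if they gave you the volume bound for free the whole theorem would be circular.) The paper's source of strict positivity is \emph{extrinsic}: the Gulliver--Lawson conformal change $\tilde g = r^{-2}g$ with $r(x) = |x|$ the ambient Euclidean distance restricted to $\Sigma$. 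This change is decidedly not bi-Lipschitz to $g$ at unit scales -- that is the point. Conformally, $\RR^n$ becomes the round cylinder $\RR\times\SS^{n-1}$, whose scalar curvature $(n-1)(n-2)$ is strictly positive precisely when $n\geq 3$, and it is this cylindrical model curvature that produces the $c(n) > 0$ in $-2\tilde\Delta + \tilde R \geq c(n)$. Your constraint that $\tilde g$ be uniformly bi-Lipschitz to $g_\Sigma$ rules out the only conformal change known to work.

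Two secondary omissions: the paper uses the Cao--Shen--Zhu one-endedness theorem to arrange that the $\mu$-bubble boundary $\partial\Omega$ is \emph{connected} before invoking spectral Gauss--Bonnet (without this you only control one component at a time, not the total area of $\partial\Omega$); and the passage from bounded-area slices to cubic volume growth is done in the paper by the Michael--Simon isoperimetric inequality for minimal submanifolds, after undoing the conformal change to convert the $\tilde g$-area bound on $\partial\Omega$ into $\area_g(\partial\Omega) = O(\rho^2)$. Your ``coarea between successive bubbles'' sketch is plausible in spirit but would have to reproduce both of these steps to close.
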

\begin{proof}[Sketch of the proof]
The stability inequality \eqref{eq:stable-min} in Euclidean space reads
\[
\int_\Sigma |A|^2 \varphi^2 \leq \int_\Sigma |\nabla \varphi|^2.
\]
Using the Gauss equations we have $|A|^2 = -R_\Sigma$. Thus, the stability inequality is the same thing as spectral nonnegativity of $-\Delta_\Sigma + R_\Sigma$. To promote this to spectral positivity we use an old observation due to Gulliver and Lawson \cite{GulliverLawson}. Letting $g$ denote the induced metric on $\Sigma$ and $r : \Sigma\to\RR$ the restriction of the ambient distance $r(x) = |x|$. Then, we consider the conformally equivalent metric $\tilde g = r^{-2} g$. (Motivation for considering $\tilde g$ may be seen from the example of $\bR^n\subset \bR^{n+1}$ so that $\tilde g$ becomes the product metric on $\bR \times \bS^{n-1}$; note that for $n\geq 3$ this metric has strictly positive scalar curvature.)

Conformally changing the nonnegative operator $-\Delta_\Sigma + R_\Sigma$ into $\tilde g$-quantities gives that $(\Sigma,\tilde g)$ satisfies
\[
-2 \tilde \Delta + \tilde R \geq c(n) > 0
\]
for $n\geq 3$. As such, we find that $(\Sigma,\tilde g)$ has \emph{uniformly positive} scalar curvature in a spectral sense. As such, we may apply a variant of the $\mu$-bubble exhaustion technique to prove that for any $\rho>0$, there's a compact region $\Omega$  so that $B_\rho^\Sigma(p) \subset \Omega$, $\partial\Omega \subset B_{C\rho}^\Sigma(p)$, and $\partial \Omega$ admits strictly positive scalar curvature in a spectral sense. 

A well-known result \cite{CSZ} (discussed further in the sequel) of H.-D.\ Cao, Y. Shen, and S. Zhu gives that $\Sigma$ has only one end. As such, it's possible to modify $\Omega$ so that $\partial\Omega$ is connected. Since $n=3$, $\partial\Omega$ is two-dimensional. Thus, we have (spectral positivity) of Gaussian curvature which implies that $\partial\Omega$ (with the metric induced by $\tilde g$) is a sphere of bounded diameter and bounded volume. After undoing the conformal change, we thus find that $\area(\partial\Omega)=O(\rho^2)$ (with the metric $g$ induced on $\Sigma$ by the immersion). Using that minimal surfaces satisfy the Euclidean isoperimetric inequality by work of Michael and Simon \cite{MSsob} (see also the recent breakthrough by Brendle \cite{Brendle:iso}) this yields the intrinsic volume growth estimate $\vol(\Omega) = O(\rho^3)$. By Theorem \ref{theo:SSY-etc}, this completes the proof.  
\end{proof}
Subsequently, with Li, Minter, and Stryker we discovered \cite{CLMS} that one may generalize this argument to  $\Sigma^4\to\bR^5$ by using bi-Ricci curvature in place of scalar curvature. In particular, a key observation is that although $|A|^2 = - R_\Sigma$ by the doubly traced Gauss equations, $|A|^2$ actually controls the full curvature tensor of $(\Sigma,g)$. For example, the Gauss equations give
\begin{align*}
\Ric_\Sigma(e_1,e_1) & = \sum_{i=2}^n \sec_\Sigma(e_1,e_i)\\ & = \sum_{i=1}^n A(e_1,e_1)A(e_i,e_i) - \sum_{i=1}^n A(e_1,e_i)^2 \\& \geq - \frac{n-1}{n} |A|^2
\end{align*}
where we used the AM-GM inequality and $\tr A = 0$ in the last step (this is closely related to the well-known ``improved Kato inequality''). Thus, stability of $\Sigma^n\to\bR^{n+1}$ implies that $(\Sigma,g)$ has nonnegative spectral \emph{Ricci} curvature in some sense. In hindsight, one may view the Cao--Shen--Zhu one-ended result used in the proof of Theorem \ref{thm:R4stab} as the combination of this observation with a result that (roughly stated) proves that a complete Riemannian manifold with spectrally nonnegative Ricci curvature that admits a Euclidean Sobolev inequality must have only one end of infinite area. 

One may repeat this kind of calculation for bi-Ricci curvature and then combine it with the Gulliver--Lawson conformal change to deduce that if $\Sigma^4\to\bR^5$ is a complete two-sided stable minimal immersion then the conformally changed metric $(\Sigma,\tilde g)$ has strictly positive bi-Ricci curvature in the spectral sense. In particular, the $\mu$-bubble technique in the proof of Theorem \ref{thm:R4stab} yields $\partial\Omega$ with spectrally positive Ricci curvature. The final step was then to prove that $\partial\Omega$ has uniformly bounded volume. This was achieved by proving a spectral Bishop--Gromov style volume comparison theorem using the isoperimetric surface approach introduced by Bray in his thesis \cite{Bray:thesis}. Combining these elements, the proof of the stable Bernstein theorem for $\Sigma^4\to\bR^5$ then follows in a similar way as in $\bR^4$. 

The spectral Bishop--Gromov volume comparison result (which we could only prove for $3$-manifolds) was subsequently generalized to all dimensions by Antonelli and Xu who proved:
\begin{theorem}[{\cite[Theorem 1]{AntonelliXu:BG}}]\label{thm:AXBG}
Suppose that $(M^n,g)$ has spectrally positive Ricci curvature $-\frac{n-1}{n-2} \Delta + \lambda_{\Ric} \geq (n-1)$. Then $\vol(M) \leq \vol(\bS^n)$. 
\end{theorem}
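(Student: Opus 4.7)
The plan is to reduce the spectral Ricci hypothesis to a genuine pointwise curvature bound on a transformed geometry, and then invoke a Bishop--Gromov style volume comparison. Assume without loss of generality that $M$ is closed (the general case follows by exhaustion). The self-adjoint elliptic operator $P = -\frac{n-1}{n-2}\Delta + \lambda_{\Ric}$ admits a strictly positive first eigenfunction $u > 0$ with first eigenvalue $\mu_1 \geq n-1$, yielding the pointwise identity
\[
-\frac{n-1}{n-2}\frac{\Delta u}{u} + \lambda_{\Ric}(x) = \mu_1 \geq n-1.
\]

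The next step is to promote this to a genuine curvature bound via either a conformal change or a Bakry--\'Emery weight built from $u$. The specific coefficient $\frac{n-1}{n-2}$ in $P$ is tuned to match a critical conformal exponent in dimension $n$, so that a suitable transformation -- for instance the Yamabe-type conformal change $\tilde g = u^{2/(n-2)} g$, or equivalently the weight $f = -(n-2)\log u$ on the Bakry--\'Emery weighted space $(M,g,e^{-f}\,dv_g)$ -- causes the $\Delta u/u$ and $|\nabla u|^2/u^2$ contributions to combine favorably with $\lambda_{\Ric}$, producing an effective pointwise lower bound by $n-1$ on a (weighted or conformally changed) Ricci tensor. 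This identification is the heart of the argument and is where the precise form of the coefficient in $P$ matters.

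Once such a pointwise Ricci-type lower bound is in hand, apply the classical (or Bakry--\'Emery) Bishop--Gromov theorem in the transformed geometry to obtain $\vol_{\tilde g}(M) \leq \vol(\mathbb{S}^n)$ (resp.\ $\int_M e^{-f}\,dv_g \leq \vol(\mathbb{S}^n)$). Finally, convert back to $\vol_g(M)$ via a H\"older-type inequality on the conformal factor $u$, normalized by the Rayleigh-quotient identity $\int_M (\tfrac{n-1}{n-2}|\nabla u|^2 + \lambda_{\Ric} u^2)\,dv_g = \mu_1 \int_M u^2\,dv_g$. The main obstacle is the intermediate step: the spectral hypothesis controls only the trace $\Delta u$ of $\Hess u$, whereas a Ricci comparison needs control in every direction. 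The proof must therefore carefully route through the Bochner identity applied to $\log u$ (or a suitable power) combined with an AM-GM-type bound on the traceless part of $\Hess u$, and then choose the conformal/weighted parameters so that all these pieces fit with no loss of constants -- the sharpness of the $\mathbb{S}^n$ bound leaves no room for slack, so the entire coefficient bookkeeping must close up exactly.
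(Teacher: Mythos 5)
The paper does not give a proof of this theorem---it is simply cited from Antonelli--Xu---so there is no ``paper's proof'' to compare against directly. What the paper \emph{does} say is that the three-dimensional precursor of this result, proven by Chodosh, Li, Minter, and Stryker, goes through ``the isoperimetric surface approach introduced by Bray in his thesis,'' and Antonelli--Xu's extension to all dimensions is in the same family of arguments: one studies a (weighted) isoperimetric profile $I(v)$, uses the second variation at stable constant-mean-curvature hypersurfaces to derive a differential inequality for $I$, and integrates. Crucially, the spectral Ricci hypothesis is used \emph{as an integral inequality on hypersurfaces}---by plugging a power of the first eigenfunction $u$ into the stability inequality---rather than by upgrading it to a pointwise tensor bound. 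Your proposal takes a genuinely different route, and unfortunately it has a gap that is not cosmetic.

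The core difficulty is one you name yourself but do not resolve, and it is fatal to the plan: the eigenfunction equation $-\tfrac{n-1}{n-2}\Delta u + \lambda_{\Ric} u = \mu u$ controls only the \emph{trace} $\Delta u$, while both the conformally changed Ricci tensor of $\tilde g = u^{2/(n-2)}g$ and the Bakry--\'Emery tensor $\Ric + \Hess f$ (with $f = -(n-2)\log u$) depend on the full $\Hess u$. There is no AM--GM or Bochner-type manipulation that bounds the traceless part of $\Hess u$ by its trace; the Bochner identity controls $|\Hess u|^2$ in an averaged sense, which is the wrong direction, and in the worst case $\Hess u$ can have a large eigenvalue precisely aligned with the direction achieving $\lambda_{\Ric}$. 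This is exactly why a conformal change \emph{does} convert spectral scalar curvature into pointwise scalar curvature (the conformal transformation of $R$ involves only $\Delta u$ and $|\nabla u|^2$) but does \emph{not} do so for Ricci. In other words, the coefficient $\tfrac{n-1}{n-2}$ is not a Yamabe exponent in disguise; it is calibrated for the isoperimetric/warped-product argument, not for a pointwise conformal Ricci identity. Incidentally, even if this step did close, your final step is also problematic: $\vol_{\tilde g}(M) = \int_M u^{n/(n-2)}\,dv_g$ can be much \emph{smaller} than $\vol_g(M)$ when $u$ is small on a large set, and without a positive lower bound on $u$ (which you do not have) H\"older's inequality runs the wrong way. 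So the proposal as written does not yield a proof, and the actual argument in the literature avoids exactly the ``pointwise upgrade'' you are attempting.
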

Shortly after this result appeared, Mazet was able to generalize the techniques described above by using Theorem \ref{thm:AXBG}, with a weighted notion of bi-Ricci curvature (originally introduced by Shen and Ye), along with a delicate choice of coefficients to resolve the stable Bernstein problem for $\Sigma^5\to\bR^6$. In sum, this yields
\begin{theorem}[\cite{Fischer-Colbrie-Schoen,doCarmoPeng,Pogorelov,ChodoshLi:stableR4,ChodoshLi:anisoR4,CMR,CLMS,Mazet}]\label{thm:stab-bern-alldim}
For  $n+1\leq 6$, if $\Sigma^n\to\bR^{n+1}$ is a complete two-sided stable minimal immersion then it's flat. 
\end{theorem}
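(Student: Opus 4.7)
The plan is to reduce the stable Bernstein problem to the Euclidean volume growth condition, after which Theorem \ref{theo:SSY-etc} concludes the argument (note that the intrinsic version of Schoen--Simon--Yau suffices exactly in the range $n+1 \leq 6$ we need). The case $n=2$ is the classical theorem of Fischer--Colbrie--Schoen, do Carmo--Peng, and Pogorelov, which I would simply cite. For $n \in \{3,4,5\}$, the strategy is to follow the template of the proof of Theorem \ref{thm:R4stab} above, adapting the spectral curvature notion in each dimension so as to establish $\vol(B_R^\Sigma(p)) = O(R^n)$.

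The unified pipeline has four ingredients. First, combine the stability inequality \eqref{eq:stable-min} with the Gauss equations to deduce that $(\Sigma, g)$ satisfies spectral nonnegativity of an appropriate curvature quantity: for $n = 3$ this is the scalar curvature (since $|A|^2 = -R_\Sigma$); for $n = 4$ this is a bi-Ricci-type quantity obtained via an improved-Kato-style bound in the spirit of Theorem \ref{thm:SY-biRic-stab}; and for $n = 5$ this is Mazet's weighted bi-Ricci curvature with a delicately chosen interpolation coefficient. Second, apply the Gulliver--Lawson conformal change $\tilde g = r^{-2} g$, where $r(x) = |x|$ is the ambient distance, to promote spectral nonnegativity on $(\Sigma, g)$ to strict spectral positivity of the analogous curvature on $(\Sigma, \tilde g)$.

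Third, invoke a variant of the $\mu$-bubble exhaustion (Theorem \ref{theo:mu-bubble-annulus}) in the conformally changed metric to produce, for each large $\rho$, a compact $\Omega$ with $B_\rho^\Sigma(p) \subset \Omega \subset B_{C\rho}^\Sigma(p)$ such that $\partial\Omega$ inherits strict spectral positivity of the induced curvature. Using the Cao--Shen--Zhu one-ended theorem I would modify $\Omega$ so that $\partial\Omega$ is connected. Fourth, convert spectral positivity on $\partial\Omega$ into uniform diameter and volume bounds: for $n = 3$ apply spectral Gauss--Bonnet as in Theorem \ref{theo:psc-min-3d-sphere} to conclude $\partial\Omega$ is a topological sphere of bounded $\tilde g$-area; for $n = 4, 5$ apply the Antonelli--Xu spectral Bishop--Gromov comparison (Theorem \ref{thm:AXBG}). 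Undoing the conformal change yields $\area(\partial\Omega) = O(\rho^{n-1})$ in the original metric, and the Michael--Simon--Brendle Euclidean isoperimetric inequality for minimal submanifolds upgrades this to $\vol(\Omega) = O(\rho^n)$, giving the desired intrinsic volume growth.

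The main obstacle is the fourth step in higher dimensions: extracting quantitative volume bounds on $\partial\Omega$ from spectral positivity of progressively higher-order curvature quantities. For $n = 3$ this is effortless via Gauss--Bonnet, but already for $n = 4$ one needs the spectral Bishop--Gromov theorem applied to a bi-Ricci-controlled hypersurface, and for $n = 5$ one must identify a weighted bi-Ricci expression on $\Sigma$ that simultaneously (i) is spectrally nonnegative by the stability inequality, (ii) becomes strictly positive after the Gulliver--Lawson conformal change, and (iii) descends to a curvature on $\partial\Omega$ to which the Antonelli--Xu comparison applies. Finding this balance is the hardest conceptual point, and is where Mazet's contribution enters. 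Past that, the remaining work is routine diagram-chasing through the established machinery.
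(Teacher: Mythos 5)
Your proposal reproduces the paper's strategy faithfully: the $n=2$ case is cited to Fischer-Colbrie--Schoen, do Carmo--Peng, and Pogorelov, and for $n\in\{3,4,5\}$ you follow exactly the pipeline the paper sketches around Theorem~\ref{thm:R4stab}---stability plus Gauss equations giving spectral nonnegativity of (weighted) scalar/bi-Ricci curvature, the Gulliver--Lawson conformal change to strict spectral positivity, $\mu$-bubble exhaustion together with Cao--Shen--Zhu to produce a connected boundary with inherited spectral positivity, diameter/volume bounds on $\partial\Omega$ (Gauss--Bonnet for $n=3$, spectral Bishop--Gromov for $n=4,5$), the Michael--Simon--Brendle isoperimetric upgrade to $\vol(B_R^\Sigma)=O(R^n)$, and finally Theorem~\ref{theo:SSY-etc}. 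The only deviation from the historical account is harmless and in fact a streamlining: the original $\Sigma^4\to\bR^5$ argument of \cite{CLMS} used a three-dimensional spectral Bishop--Gromov via Bray's isoperimetric surface method, whereas you invoke the later Antonelli--Xu Theorem~\ref{thm:AXBG} for both $n=4$ and $n=5$, which indeed suffices.
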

This leaves only the case of $\Sigma^6\to\bR^7$ unsolved. We briefly mention several other open problems:
\begin{enumerate}
\item Removable singularities: If $\Sigma^n\to \bR^{n+1}\setminus\{0\}$ is two-sided stable minimal and complete away from $\{0\}$, is it flat? (Yes for $n=2$ by work of Gulliver and Lawson \cite{GulliverLawson}.)
\item One-sidedness: Are there one-sided complete stable minimal immersions? (No by work of Ros \cite{Ros:onesided} for $n=2$.)
\item Classify low Morse index\footnote{$\Sigma^n\to\bR^{n+1}$ has Morse index $=I$ if it's stable on the $L^2$-orthogonal complement of an $I$-dimensional space of deformations. Theorem \ref{thm:stab-bern-alldim} can be generalized to prove that complete finite index immersions have particularly simple asymptotic behavior (in particular they must be properly immersed).} minimal hypersurfaces in $\bR^{n+1}$. (See \cite{LopezRos:index-one,ChMa14,ChMa2018} for results in $\bR^3$.) Is the catenoid the unique index $1$ example in $\bR^4$? (See \cite{CMMR:splitting}.)
\item If $\Sigma^n\to\bR^{n+1}$ is a complete two-sided stable minimal immersion where $n \gg 8$, does it have (intrinsic) volume growth estimates $\vol(B^\Sigma_\rho(p)) = O(\rho^n)$? Can it have exponential intrinsic volume growth?
\end{enumerate}

Finally, we briefly mention that the rigidity of (non-compact) stable minimal hypersurfaces has also been extensively studied in other ambient manifolds. For example, Fischer-Colbrie and Schoen proved \cite{Fischer-Colbrie-Schoen} that a complete non-compact two-sided stable minimal immersion $\Sigma \to (M^3,g)$ into a $3$-manifold with non-negative scalar curvature must be conformal to the plane or cylinder and in the cylindrical case, the immersion is totally geodesic, intrinsically flat, and the scalar curvature and normal Ricci curvature both vanish along $\Sigma$ (cf.\ \cite[Proposition C.1]{CCE}). With Eichmair and Moraru, we proved a global rigidity result for $(M^3,g)$ with nonnegative scalar curvature that contains an area-minimizing cylinder \cite{CEV:mincyl}  (see also \cite{Liu:milnor,CCE}), generalizing earlier work of Cai and Galloway who proved similar results for area-minimizing tori \cite{Cai-Galloway:2000}.

In higher dimensions, related results were proven by Shen and Ye \cite{ShenYe,SY:general} later extended and refined by Catino, Mastrolia, and Roncoroni \cite{CMR}. We also obtained similar results with Li and Stryker, proving:
\begin{theorem}[{\cite[Theorem 1.10]{CLS:complete-stable}}]
Suppose that $(M^4,g)$ has weakly bounded geometry, strictly positive scalar curvature $R\geq 1$, and nonnegative sectional curvature $\sec \geq 0$. If $\Sigma^3\to(M^4,g)$ is a complete two-sided stable minimal immersion then it's totally geodesic and has vanishing normal Ricci curvature. 
\end{theorem}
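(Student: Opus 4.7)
The plan is to upgrade the stability inequality to strict spectral positivity of scalar curvature on $\Sigma$, exploit this via a $\mu$-bubble exhaustion to obtain linear intrinsic volume growth on $\Sigma$, and then close the argument by inserting a large cutoff into the stability inequality. First, substituting $\Ric(\nu,\nu)=\frac{1}{2}(R-R_\Sigma-|A|^2)$ (from the doubly-traced Gauss equation with $H=0$) into \eqref{eq:stable-min} gives, for every $\varphi\in C^\infty_c(\Sigma)$,
\[
\int_\Sigma (R+|A|^2)\varphi^2 \leq \int_\Sigma 2|\nabla \varphi|^2 + R_\Sigma \varphi^2,
\]
so the operator $-2\Delta_\Sigma + R_\Sigma$ dominates $1+|A|^2\geq 1$. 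The coefficient $2$ lies strictly below the critical value $4$ highlighted in Section \ref{sec:spect}, so this spectral strict positivity is strong enough to drive the $\mu$-bubble machinery on the three-manifold $\Sigma$.

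Second, I would follow the three-dimensional case of the proof of Theorem \ref{theo:asph45} and run it on $\Sigma$ itself. For each $\rho>0$, the spectral form of Theorem \ref{theo:mu-bubble-annulus} applied to an annular region in $\Sigma$ produces a compact $\Omega\subset \Sigma$ with $B_\rho^\Sigma(p)\subset \Omega\subset B_{C\rho}^\Sigma(p)$ whose boundary is a closed two-surface of strictly positive spectral Gaussian curvature. The Schoen--Yau diameter estimate then bounds the diameter of each component of $\partial\Omega$, and Gauss--Bonnet together with spectral positivity bounds its area. Iterating outward through nested annular regions as in the slice-and-dice decomposition of \cite{ChodoshLi2020generalized,CLL:suff.conn.psc}, and using $\sec_M\geq 0$ together with weakly bounded geometry to apply a Michael--Simon/Brendle type isoperimetric inequality \cite{MSsob,Brendle:iso} for $\Sigma$ and a Cao--Shen--Zhu style one-ended statement (both available because $\Ric_M\geq 0$), one should obtain the intrinsic linear volume growth
\[
\vol(B_\rho^\Sigma(p))\leq C\rho \quad\text{for all } \rho\geq 1.
\]

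Finally, I would insert the cutoff $\varphi=\eta_\rho$ with $\eta_\rho\equiv 1$ on $B_\rho^\Sigma(p)$, $\supp \eta_\rho\subset B_{2\rho}^\Sigma(p)$, and $|\nabla \eta_\rho|\leq 2/\rho$ into \eqref{eq:stable-min} to obtain
\[
\int_{B_\rho^\Sigma(p)}(|A|^2+\Ric(\nu,\nu)) \leq \int_\Sigma |\nabla \eta_\rho|^2 \leq \frac{C\vol(B_{2\rho}^\Sigma(p))}{\rho^2} \leq \frac{C'}{\rho} \longrightarrow 0 \quad (\rho\to\infty).
\]
Because $\sec_M\geq 0$ forces $\Ric(\nu,\nu)=\sum_i \sec_M(e_i,\nu)\geq 0$ and $|A|^2\geq 0$, both nonnegative integrands must vanish pointwise, giving $A\equiv 0$ and $\Ric(\nu,\nu)\equiv 0$. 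The hard part will be the middle step: converting the intrinsic $\mu$-bubble area bound into an intrinsic volume bound for $\Sigma$. This requires executing the slice-and-dice decomposition carefully in the curved ambient, where weakly bounded geometry seems essential to keep the isoperimetric constants uniform and to bound the number of $\mu$-bubble components per shell, and where the one-ended input must be produced without relying on Euclidean-specific structure.
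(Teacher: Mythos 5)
Your strategy lines up with the paper's: reduce the stability inequality to the rearranged form \eqref{eq:stab-inductive} (with the favorable coefficient $\alpha=2<4$), extract uniform spectral positivity of $R_\Sigma$ from $R\geq 1$, run a $\mu$-bubble exhaustion on $\Sigma$ to obtain volume-growth control, and finish with a cutoff in the stability inequality together with $\sec_M\geq 0 \Rightarrow \Ric_M(\nu,\nu)\geq 0$. The survey itself only says of \cite{CLS:complete-stable} that the proof ``uses the $\mu$-bubble exhaustion technique to bound the volume growth of certain ends of $\Sigma$,'' so your outline is the right one, and you've correctly located the crux in the conversion from bubble-area bounds to intrinsic volume growth.

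Two refinements on that crux. First, the Gauss--Bonnet/Schoen--Yau bound controls the diameter and area of \emph{each component} of a $\mu$-bubble, not the whole bubble, so some control on the number of components (in the Euclidean stable Bernstein proof this was where the Cao--Shen--Zhu one-end result entered) is essential, and that argument does not transfer verbatim to a curved ambient; the paper works end-by-end, which is consistent with the survey's phrase ``\emph{certain ends}.'' Second, you do not need full linear volume growth for the final cutoff step: a logarithmic cutoff makes quadratic growth $\vol(B_\rho^\Sigma)=O(\rho^2)$ sufficient to kill $\int_\Sigma|\nabla\varphi|^2$, which gives you more slack in the $\mu$-bubble/isoperimetric chain. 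The isoperimetric input you invoke (Michael--Simon/Brendle type) also needs to be quoted in a form valid for minimal hypersurfaces of a curved ambient with weakly bounded geometry rather than the Euclidean-specific statements; this is one of the places where the weakly bounded geometry hypothesis genuinely enters, exactly as you suspect.
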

The proof of this result also uses the $\mu$-bubble exhaustion technique to bound the volume growth of certain ends of $\Sigma$ (see also \cite{ChodoshLiStryker:VolumeGrowth}). These ideas have been widely applied to similar problems by several people. Here are some open problems in this vein:
\begin{enumerate}
\item If $\Sigma^4\to (M^5,g)$ is a complete two-sided stable minimal immersion into a manifold with bounded geometry, strictly positive scalar curvature $R\geq 1$, and non-negative sectional curvature $\sec \geq 0$ must $\Sigma$ be totally geodesic with vanishing normal Ricci curvature? This is not true one dimension higher, i.e.\ for $\Sigma^5\to (M^6,g)$, see \cite[Section B.3]{CLS:complete-stable}. One may expect that $\Sigma^4\to (M^5,g)$ is ``borderline'' since it's possible that the stability conditions could be used to imply that $\Sigma^4$ is macroscopically two-dimensional (cf.\ Section \ref{sec:PSC-geo}) and thus might have quadratic area growth. 
\item If $\Sigma^n \to \bH^{n+1}$ is a two-sided stable CMC immersion in hyperbolic space with mean curvature $|H| \geq n$ (the mean curvature of a horosphere) must $\Sigma$ be a horosphere? This was solved by Da Silveira for $n=2$ but is unsolved for $n\geq 3$. Partial progress has been achieved \cite{Cheng:cmc,Deng:CMC,Hang:CMC-H4}. Similar questions may be asked for CMC immersions $\Sigma^n\to \bR^{n+1}$ or $\Sigma^n\to \bS^{n+1}$ (cf.\ \cite{HongYan,ChenHongLi}). 
\end{enumerate}

\section{Generic regularity for the Plateau problem.}\label{sec:generic}  

We've shown that area-minimizing hypersurfaces are effective tools for comparison geometry. The possibility of singularities in high dimensions is a major issue with the technique. We close this survey by discussing generic regularity, which provides one possible approach to this problem based on the following (the name is taken from the recent work \cite{CLZ:biRicci} but the conjecture has been around since at least the 1980s in various forms):

\begin{conjecture}[Generic regularity hypothesis]\label{conj:GRH}
For a closed smooth manifold $M^{n+1}$, there's a Baire generic set of metrics $\mathcal{G}$ so that if $g\in \mathcal{G}$ and $\sigma \in H_n(M;\ZZ)$ is a codimension-one homology class then there's a smooth representative $\Sigma \in \sigma$ of least area.
\end{conjecture}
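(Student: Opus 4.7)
The plan is to establish Conjecture~\ref{conj:GRH} by combining an openness/density argument with an inductive dimension-reduction on the singular set. Fix a countable generating set $\{\sigma_i\}_{i\in\NN}$ of $H_n(M;\ZZ)$ and set
\[
\mathcal{G}_i := \{g : \sigma_i \text{ admits a smooth least-area representative with respect to } g\}.
\]
The goal is to show each $\mathcal{G}_i$ is open and dense in the $C^\infty$ topology on the space of metrics, so that $\mathcal{G}:=\bigcap_i \mathcal{G}_i$ is Baire generic. Openness is the easier direction: if $\Sigma_g \in \sigma_i$ is smooth and area-minimizing for $g$, then standard Allard-type regularity applied to a compactness argument for area-minimizers of nearby metrics $g'$ shows that any $g'$-minimizer in $\sigma_i$ is a smooth normal graph over $\Sigma_g$, hence smooth. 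The substantive content is density.

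For density I would proceed by induction on $n+1$, the base case $n+1\leq 7$ being Theorem~\ref{theo:minimize}. Assume $n+1\geq 8$, fix $g_0$, and choose a $g_0$-minimizer $\Sigma_0 \in \sigma_i$ with $\mathrm{sing}(\Sigma_0)$ of Hausdorff dimension $\leq n-7$. The strategy is to produce a smooth $C^\infty$-small perturbation $g$ of $g_0$, supported in a tubular neighborhood of $\mathrm{sing}(\Sigma_0)$, so that the new minimizer is regular. The key tool is the Hardt--Simon foliation: a strictly stable, strictly minimizing hypercone $C\subset \bR^{n+1}$ admits a smooth foliation of $\bR^{n+1}\setminus\{0\}$ by area-minimizing hypersurfaces asymptotic to $C$ at infinity. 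Near a singular point $p$ with such a tangent cone, one perturbs the metric so as to bias minimization toward a single leaf of (the pullback of) this foliation; the new minimizer is then forced to be a graph over a leaf near $p$, and therefore smooth there. This is Smale's picture for isolated singularities ($n+1=8$) and, via refinements of Chodosh--Liokumovich--Spolaor and Chodosh--Mantoulidis--Schulze, handles the first few dimensions beyond.

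To propagate this across non-isolated singularities I would stratify $\mathrm{sing}(\Sigma_0)$ by the symmetry of tangent cones: points with $k$ independent translational symmetries in their tangent cone form a stratum of Hausdorff dimension $\leq k$, and $C_p$ splits as $C_p' \times \bR^k$ with $C_p'$ area-minimizing in $\bR^{n+1-k}$. By induction on dimension, I may assume $C_p'$ is strictly stable and strictly minimizing, so that the Hardt--Simon foliation exists transversally to the translational factor. Covering $\mathrm{sing}(\Sigma_0)$ by balls adapted to this stratification (à la Naber--Valtorta quantitative stratification) and patching local foliation-based perturbations along the stratum, using Simon's Lojasiewicz-type uniqueness and decay estimates for tangent cones to control the overlaps, would in principle yield a global perturbation $g$ with smooth minimizer.

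The hardest step by far is this last one: globalizing the local perturbation across a singular set of positive dimension without creating new singularities elsewhere. The obstacles are (i) possible failure of uniqueness of tangent cones along the stratum, which obstructs continuously selecting a foliation leaf; (ii) strata of low symmetry where the induction hypothesis gives little and where the remaining cross-section cone $C_p'$ may itself carry a complicated singular set; and (iii) interaction between perturbations on overlapping neighborhoods at different scales, which must be interpolated so that the resulting $g$-minimizer does not develop a \emph{new} singularity at the seams. Controlling these points is exactly what has kept the general case of Conjecture~\ref{conj:GRH} open; the method above is known to work unconditionally only for $n+1\leq 10$ (as used in Theorem~\ref{theo:S1-stab}) and under various extra hypotheses beyond.
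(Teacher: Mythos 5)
The statement you were asked to prove is a \emph{conjecture}, and the paper contains no proof of it; it is one of the open problems that the paper's last section is organized around. You correctly recognize this in your final paragraph, so the right question to ask is not whether your proof is complete (it is not, and cannot be) but whether your description of the state of the art and of the strategy behind the known partial results is accurate.

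On this score there are a few substantive inaccuracies. First, the best unconditional result stated in the paper is that the generic regularity hypothesis holds in ambient dimension $\leq 11$ (Theorem~\ref{thm:generic-reg}, combining \cite{CMS:generic.9.10} and the four-author extension to dimension $11$), not $\leq 10$; and the attribution to ``Chodosh--Liokumovich--Spolaor'' does not match any reference the paper uses for generic regularity. Second, and more importantly, the method you sketch for going beyond isolated singularities --- stratify the singular set by translational symmetry of tangent cones, cover by adapted balls, construct local Hardt--Simon foliation perturbations on each ball, and patch them using Simon's \L{}ojasiewicz-type decay --- is \emph{not} the mechanism behind the partial results described in the paper, and would indeed collapse against exactly the obstacles you enumerate (non-uniqueness of tangent cones, low-symmetry strata, and interactions between perturbations at overlapping scales). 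What the paper describes instead, around Proposition~\ref{prop:cant-move-cone}, is a genuinely different argument: one does not try to resolve the singular set stratum by stratum at all. Rather, one forms a monotone one-parameter family of minimizers $\{\Sigma_s\}_s$ (a ``foliation'') by perturbing the metric so the new minimizer moves strictly to one side, and then proves that the singular set of the \emph{entire} family has Hausdorff dimension $\leq n-7$ --- the same bound one would get for a single leaf. That dimension count, combined with a quantitative lower bound on how fast disjoint minimal hypersurfaces must separate near a singularity (coming from Simons' eigenvalue estimate for stable cones), forces some leaf $\Sigma_{s_0}$ to be entirely smooth. The inductive-splitting structure ($C_p = C_p' \times \bR^k$) and Proposition~\ref{prop:cant-move-cone} (which is a De Giorgi--type splitting statement) enter in showing that when a new minimizer moves off an old one near a cone, it either improves or is modeled on an iterated tangent cone, which is how the dimension count on the union of singular sets is established. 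So the known partial progress sidesteps the local patching problem rather than solving it. Your openness argument for $\mathcal{G}_i$ is fine, and the Hardt--Simon picture for isolated singularities is correct; the gap is in the proposed globalization, which does not reflect how \cite{CMS:generic.9.10} and its sequel actually proceed.
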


One may reasonably extend this conjecture include $\mu$-bubbles since they behave very similarly (the integral constraint scales away near a singularity). 

We now explain how the generic regularity hypothesis can be used in minimal surface comparison results by describing the proof of the following result generalizing Theorem \ref{thm:geroch-high-dim}.  
\begin{theorem}\label{theo:torusPMT}
Assume the generic regularity hypothesis is known for all closed manifolds $X^k$ with $k \leq n+1$. If $M^{n+1}$ is a closed manifold then $T^{n+1}\# M^{n+1}$ does not admit a Riemannian metric of positive scalar curvature. 
\end{theorem}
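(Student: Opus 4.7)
The plan is to mimic the inductive descent proof of Theorem~\ref{thm:geroch-high-dim}, using Conjecture~\ref{conj:GRH} at each step to handle the singularities that may occur in the area-minimizers when $n+1\geq 8$. Suppose for contradiction that $g$ is a Riemannian metric of positive scalar curvature on $T^{n+1}\# M^{n+1}$. Collapsing the $M$ summand gives a degree-one map $\pi\colon T^{n+1}\# M^{n+1}\to T^{n+1}$, and I would set $\alpha_i = \pi^*(dx^i) \in H^1(T^{n+1}\# M^{n+1};\RR)$ for $i=1,\dots,n+1$, so that $\alpha_1\cup\cdots\cup\alpha_{n+1}\neq 0$. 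It is this cup-product nonvanishing that I would transport through the descent.

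First I would produce $\Sigma_n$ smooth and area-minimizing in the class Poincar\'e dual to $\alpha_{n+1}$. Since $R>0$ is open in the $C^2$ topology of metrics, a $C^2$-small neighborhood of $g$ intersected with the Baire generic set from Conjecture~\ref{conj:GRH} (applied to $T^{n+1}\# M^{n+1}$) produces an arbitrarily small perturbation $g'$ with $R_{g'}>0$ for which a smooth area-minimizing representative $\Sigma_n$ exists. The stability inequality \eqref{eq:stab-inductive} together with $R_{g'}>0$ gives spectral positivity of scalar curvature on the induced metric on $\Sigma_n$, and by the conformal-change technique described in Section~\ref{sec:spect} I can replace this induced metric by a conformally equivalent one with strictly positive pointwise scalar curvature.

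The heart of the argument is the inductive step. Given $\Sigma_k\subset\cdots\subset\Sigma_n$ with $\Sigma_k$ smooth and equipped with a metric $\tilde g_k$ of positive scalar curvature and with $\alpha_1|_{\Sigma_k}\cup\cdots\cup\alpha_k|_{\Sigma_k}\neq 0$, I would minimize area on $(\Sigma_k,\tilde g_k)$ in the Poincar\'e dual of $\alpha_k|_{\Sigma_k}$. Applying Conjecture~\ref{conj:GRH} on $\Sigma_k$ (and using openness of positive scalar curvature to absorb the small perturbation of $\tilde g_k$) yields a smooth area-minimizing $\Sigma_{k-1}$; the stability/Gauss computation of \eqref{eq:stab-inductive} followed by another conformal change produces a metric of positive scalar curvature on $\Sigma_{k-1}$, and the cup-product nonvanishing is preserved because $[\Sigma_{k-1}]$ is Poincar\'e dual in $\Sigma_k$ to $\alpha_k|_{\Sigma_k}$, so
\begin{equation*}
\int_{\Sigma_{k-1}} \alpha_1\cup\cdots\cup\alpha_{k-1} = \int_{\Sigma_k} \alpha_1\cup\cdots\cup\alpha_k \neq 0.
\end{equation*}
Iterating down to $k=2$ yields a closed surface $\Sigma_2$ equipped with a metric of spectrally positive Gaussian curvature and with $\alpha_1|_{\Sigma_2}\neq 0 \in H^1(\Sigma_2;\RR)$; by \eqref{eq:spect-pos-Gauss} and Gauss--Bonnet, $\Sigma_2$ is a union of $2$-spheres, contradicting the nontriviality of $\alpha_1|_{\Sigma_2}$.

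The main obstacle I expect is reconciling the successive perturbations: the metric $\tilde g_k$ on $\Sigma_k$ comes from restricting (then conformally modifying) a previously perturbed ambient metric, but to invoke generic regularity on $\Sigma_k$ I must perturb $\tilde g_k$ once more, and I need this to intersect the Baire generic set on $\Sigma_k$ without destroying the spectral positivity carried down from above. This should succeed because positive scalar curvature is $C^2$-open and the Baire generic set is dense, so the two intersect; the more delicate technical point is that the first-eigenfunction used in each conformal change, and the associated spectral constant, must vary continuously enough with the metric that $n-1$ successive small perturbations do not accumulate to destroy positivity. Both points are expected to be routine given openness of spectral positivity, so the real content lies in having Conjecture~\ref{conj:GRH} available at every dimension $\leq n+1$.
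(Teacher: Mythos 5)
Your proposal is correct and follows essentially the same route as the paper: perturb to the Baire-generic set at each stage of inductive descent, invoke the Schoen--Yau conformal change to convert spectral positivity into pointwise positive scalar curvature, and repeat until reaching a contradiction on a surface. Your explicit cup-product tracking via the degree-one collapse map is exactly the topological bookkeeping the paper invokes by reference to Theorem~\ref{thm:geroch-high-dim}, and the technical worry you raise about accumulating perturbations is not an issue since each conformal change resets to a genuinely pointwise-positive scalar curvature metric before the next small perturbation is made.
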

Theorem \ref{theo:torusPMT} is known to imply the positive mass theorem.
\begin{proof}[Sketch of the proof]
We mimic the proof of Theorem \ref{thm:geroch-high-dim}. Assume that $(T\#M, g)$ has positive scalar curvature. Since positivity of scalar curvature is an open condition and the set of metrics $\mathcal{G}$ in the generic regularity hypothesis is dense (by assumption), we may assume without loss of generality that $g \in \cG$. Thus, the area minimizer $\Sigma_n$ in the homology class $T^n\times \{0\} \subset T^{n+1}\# M$ will be smooth. As in Theorem \ref{thm:geroch-high-dim}, $\Sigma_n$ has strictly positive scalar curvature in the spectral sense. Schoen and Yau showed that this implies \cite{SY:manuscripta} that the induced metric on $\Sigma_n$ is conformal to a metric of strictly positive scalar curvature $g_n$. 

Since we assumed the generic regularity hypothesis is applicable to $\Sigma_n$, we can replace $g_n$ by a nearby metric and thus assume that $g_n\in \cG$ (again, this does not change the condition of positivity of scalar curvature). Now, using the topological argument from Theorem \ref{thm:geroch-high-dim}, we may repeat this process (minimize, conformally change the induced metric, perturb to an element of $\cG$) until we reach a contradiction in $n=2$. 
\end{proof}

Progress on Conjecture \ref{conj:GRH} was made by Hardt and Simon \cite{HardtSimon} and Smale \cite{Smale:generic} who proved it for $8$-dimensional ambient manifolds (the first time singularities can appear). Their approach has two essential features:
\begin{enumerate}
\item Perturb the metric $g$ slightly to start. Then, for a (singular) area-minimizer $\Sigma^7 \subset (M^8,g)$, we may perturb $g$ slightly to $\tilde g$ so that the new minimizer $\tilde\Sigma$ moves to ``one side'' relative to $\Sigma$, but remains close to $\Sigma$. 
\item The singularities of $\Sigma$ are modeled on area-minimizing hypercones $\cC$. Hardt and Simon proved that if $\cC$ is singular only at the origin (this always holds in $8$-dimensions) then $\tilde\Sigma$ will be modeled on a \emph{completely smooth} area-minimizing hypersurface $S$, lying on one side of $\cC$. By $\varepsilon$-regularity for area-minimizers, this shows that $\tilde\Sigma$ is completely smooth. 
\end{enumerate}
The first step works in all dimensions but (seemingly) not the second step. If $\cC$ has a non-trivial singular set, then the classification of area-minimizing hypersurfaces lying to one side is not solved (existence was proven by Z.\ Wang \cite{Wang:smoothing}). Even worse, a proof of such a uniqueness result would not substitute for step (2) above, as it might be possible that $\tilde\Sigma$ is modeled on an \emph{iterated tangent cone} of $\cC$ (i.e.\ a singularity of $\tilde\Sigma$ is very close to a singularity of $\cC$ that does not lie at the origin; this is not prohibited by $\varepsilon$-regularity). 

With Mantoulidis and Schulze, we have been able to resolve this issue in $9$ and $10$ ambient dimensions (see also \cite{ChodoshMantoulidisSchulze:Improved910,Li:mink}) and recently in addition with Z.\ Wang we have extended this to $11$-dimensions as well.
\begin{theorem}[{\cite[Theorem 1.2]{CMS:generic.9.10}, \cite[Theorem 1.2]{CMSW}}]\label{thm:generic-reg}
The generic regularity hypothesis holds in manifolds of dimension $\leq 11$. 
\end{theorem}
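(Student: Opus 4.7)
The plan is to follow the two-step Hardt--Simon--Smale approach. In step one, starting from an arbitrary metric $g$, I would perturb to $\tilde g$ so that the new area-minimizer $\tilde\Sigma$ in the given homology class lies strictly to one side of the original minimizer $\Sigma$ and stays close to it; this step is essentially dimension-free and the arguments of \cite{HardtSimon,Smale:generic} go through. The real content is step two: show that $\tilde\Sigma$, which is $C^0$-close to $\Sigma$, is smooth everywhere. By Allard-type $\varepsilon$-regularity, this reduces to a local statement at each tangent cone $\cC$ of $\Sigma$: near $\cC$, the one-sided minimizer $\tilde\Sigma$ must be modeled on a \emph{smooth} minimizer, and in particular not on an iterated tangent cone of $\cC$.

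In ambient dimension $\leq 8$, the singular set of $\cC$ is isolated and Hardt--Simon produced a smooth one-sided minimizer $S$ together with a foliation of a punctured neighborhood of $\cC$ by dilations of $S$; trapping $\tilde\Sigma$ between two leaves forces smoothness. My approach for $n+1\in\{9,10,11\}$ is to build an analogous one-sided foliation near $\cC$ despite the fact that the singular set of $\cC$ may now have dimension up to $n-7\leq 3$. Existence of individual one-sided smoothings in this generality is provided by Wang's theorem \cite{Wang:smoothing}. To stitch them into a foliation, I would argue by induction on the dimension $k$ of the singular stratum of $\cC$: at each point $p$ of the stratum, an iterated tangent cone of $\cC$ splits as $\cC_p=\bR^k\times \cC_p'$ with $\cC_p'$ a lower-dimensional minimizing hypercone whose singular set has strictly smaller dimension. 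The inductive hypothesis produces a smooth one-sided smoothing of $\cC_p'$, and coning and sliding these along the $\bR^k$-factor yields a local one-sided foliation near the stratum.

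The main obstacle I expect is uniformity: to assemble the slicewise foliations into a global one on a one-sided neighborhood of $\cC$, one needs a quantitative, uniform nondegeneracy (``integrability'') estimate for the Jacobi operator on the smoothings, valid along the whole singular stratum. This in turn requires an additional generic-perturbation argument ensuring that, after passing to a residual subset of metrics, every tangent cone encountered is simultaneously ``strictly minimizing'' and ``strictly stable'' in a sense that survives the slicing. The restriction $n+1\leq 11$ enters because the induction bottoms out at ambient dimension $8$ after at most three descents in the dimension of the singular stratum; beyond this, the strata become too large for the compactness and curvature estimates underlying $\varepsilon$-regularity to be preserved at each step. Once such uniform foliations near every $\cC$ are in hand, trapping $\tilde\Sigma$ between leaves and invoking $\varepsilon$-regularity rules out singularities, completing the proof and hence verifying Conjecture \ref{conj:GRH} in dimension $\leq 11$.
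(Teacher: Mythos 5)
There is a genuine gap in your approach, and it is precisely the one that the paper itself flags as the reason the na\"ive Hardt--Simon strategy fails beyond dimension $8$. Your plan is to build a one-sided foliation of a punctured neighborhood of each tangent cone $\cC$ by \emph{smooth} minimal hypersurfaces (built from Wang's one-sided smoothings, glued inductively along the strata), trap $\tilde\Sigma$ between two leaves, and invoke $\varepsilon$-regularity. But when $\cC$ has a non-isolated singular set, no such foliation by uniformly smooth leaves exists: any leaf passing near a singular point $p\neq 0$ of $\cC$ is itself modeled on the (iterated) tangent cone $\cC_p=\RR^k\times\cC_p'$, and the ``coning and sliding'' construction does not resolve this, because the slicewise Hardt--Simon smoothings of $\cC_p'$ do not glue to a global smooth leaf over a neighborhood of $\cC$. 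More to the point, even granting some foliation and the uniqueness and nondegeneracy you ask for, trapping plus $\varepsilon$-regularity does \emph{not} exclude that $\tilde\Sigma$ has a singularity modeled on an iterated tangent cone of $\cC$ (a singularity of $\tilde\Sigma$ sitting very close to a singularity of $\cC$ away from the origin). This is explicitly identified in the paper as the obstruction that a uniqueness-plus-trapping argument cannot overcome. If your outline worked, it would also prove Conjecture~\ref{conj:GRH} in all dimensions, which is still open; the fact that your induction appears dimension-free should itself be a warning sign.

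The actual proof sidesteps trapping entirely. One does not try to conclude smoothness of a single perturbed minimizer; instead one constructs a monotone one-parameter family $\{\Sigma_s\}_s$ (moving to one side) and, using Proposition~\ref{prop:cant-move-cone} together with a geometric argument, shows that the singular set of the \emph{entire family} $\bigcup_s \Sigma_s$ satisfies the same Hausdorff dimension bound $\dim \mathcal{S}\leq n-7$ that a single leaf would. This is then combined with a quantitative estimate, going back to Simons' classification of stable cones, on how fast two disjoint minimal hypersurfaces must ``pull apart'' at a singularity; together these force some leaf $\Sigma_s$ to be entirely smooth. Your stated reason for the cutoff $n+1\leq 11$ (that the stratum-dimension induction bottoms out at ambient dimension $8$ after three steps) is also not the actual mechanism: the restriction is quantitative, coming from the interplay of the $n-7$ dimension bound with the separation rate, and pushing from $10$ to $11$ specifically requires the additional borderline analysis of quadratic cylindrical cones via Jacobi field and nonconcentration estimates.
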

The proof of Theorem \ref{thm:generic-reg} is too complicated to fully describe here (one may refer to the lecture notes \cite{Chodosh:UCL}). Instead we show how a key ingredient (cf.\ \cite[Proposition 3.3]{CMS:generic.9.10}) is intimately related to De Giorgi's splitting theorem (see step 3 in the outline of the proof of Theorem \ref{theo:bernstein}). 
\begin{proposition}\label{prop:cant-move-cone}
Suppose $\cC^n\subset \bR^{n+1}$ is an area-minimizing hypercone and $\mathbf{x} \in \bR^{n+1}\setminus\{\mathbf{0}\}$ is a non-zero vector so that $\cC \cap (\cC + \mathbf{x}) = \emptyset$. Then $\cC$ is a flat hyperplane. 
\end{proposition}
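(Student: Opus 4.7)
My plan is to reduce the statement to a Bernstein-type fact by showing that the disjointness hypothesis forces $\cC$ to be a graph over the hyperplane $\Pi := \mathbf{x}^\perp$, and then exploiting the $1$-homogeneity of this graph to conclude it is linear.

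First, I would establish a foliation. Let $E$ be the open region bounded by $\cC$, so $\partial E = \cC$. Since $\cC$ is connected and $\cC+\mathbf{x}$ is disjoint from $\cC$, we may after swapping $E$ with $E^c$ assume $\cC+\mathbf{x}\subset E$. Picking a point $p\in E$ with $d(p,\cC)>|\mathbf{x}|$ gives $p+\mathbf{x}\in E$, so by connectedness $E+\mathbf{x}\subset E$, with strict inclusion because $\cC+\mathbf{x}\neq\cC$. The cone property $\lambda E = E$ for $\lambda>0$ then upgrades this to $E+t\mathbf{x}=t(E+\mathbf{x})\subsetneq tE=E$ for every $t>0$, and by a shift also $E-t\mathbf{x}\supsetneq E$ for every $t>0$. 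In particular $\cC\cap(\cC+t\mathbf{x})=\emptyset$ for all $t\neq 0$, so $\{\cC+t\mathbf{x}\}_{t\in\bR}$ is a pairwise disjoint family of area-minimizing hypercones.

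Next, I would show $\cC$ is a graph over $\Pi$. Each line $\ell_q=\{q+s\mathbf{x}:s\in\bR\}$ for $q\in\Pi$ meets $\cC$ at most once, since two hits would yield a point of $\cC\cap(\cC+t\mathbf{x})$ with $t\neq 0$. It meets $\cC$ at least once: from $\cC+\mathbf{x}\subset E^{\circ}$ we get $\mathbf{x}=0+\mathbf{x}\in E^{\circ}$, while the strict inclusion $E\subsetneq E-\mathbf{x}$ forces $\cC-\mathbf{x}\subset (E^c)^{\circ}$ and in particular $-\mathbf{x}\in(E^c)^{\circ}$. Writing $q+s\mathbf{x}=s(q/s+\mathbf{x})$ and invoking the cone invariance of $E$ and $E^c$, one has $q+s\mathbf{x}\in E^{\circ}$ for $s\gg 1$ and $q+s\mathbf{x}\in(E^c)^{\circ}$ for $s\ll -1$, so continuity forces $\ell_q$ to cross $\cC$. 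Hence $\cC=\{(q,\psi(q)):q\in\Pi\}$ for a uniquely defined function $\psi:\Pi\to\bR$.

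Finally, I would analyze $\psi$. From the cone property of $\cC$ we get $\psi(\lambda q)=\lambda\psi(q)$ for $\lambda>0$, i.e.\ $\psi$ is $1$-homogeneous. Moreover $\psi$ is globally Lipschitz: the link $\Sigma=\cC\cap\bS^n$ is closed in $\bS^n$ and avoids the poles $\pm\mathbf{x}/|\mathbf{x}|$ (since $\pm\mathbf{x}\notin\cC$), hence stays a positive distance from them, bounding the slope of the graph. Classical regularity for Lipschitz weak solutions of the minimal surface equation (De Giorgi--Nash--Moser) then makes $\psi$ smooth on all of $\Pi$. But the gradient of a $1$-homogeneous function is $0$-homogeneous, i.e.\ constant along each ray; continuity of $\nabla\psi$ at the origin forces it to be globally constant, so $\psi$ is linear and $\cC$ is a hyperplane.

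The step I expect to be the main obstacle is establishing the surjectivity of the vertical projection in the graphicality argument. The ``at most once'' direction is immediate from the disjointness of translates, but extracting from the abstract hypothesis the geometric fact that $\mathbf{x}$ and $-\mathbf{x}$ sit on opposite sides of $\cC$ --- so that cone invariance can drive $\ell_q$ from one side of $\cC$ to the other --- is the most delicate part, since it is precisely where the cone structure and the strict inclusion $E+\mathbf{x}\subsetneq E$ are both used in an essential way.
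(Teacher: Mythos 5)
Your reduction to graphicality is correct and is essentially the same first move as the paper: dilation invariance upgrades disjointness of one translate to disjointness of all translates $\cC+t\mathbf{x}$, $t\neq 0$, and the cone structure of $E$ and $E^c$ then forces each vertical line to cross $\cC$ exactly once. Your finish (the gradient of a $1$-homogeneous function is $0$-homogeneous, so continuity at the origin forces linearity) is a clean version of the paper's remark that ``a non-flat $\cC$ cannot be graphical near the tip.''

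The gap is in the step you treat as routine, not the step you flagged as delicate. The claim that ``the link avoids the poles, hence $\psi$ is globally Lipschitz'' does not follow. Avoiding a neighborhood of $\pm\mathbf{x}/|\mathbf{x}|$ bounds the \emph{ray slope} $|\psi(q)|/|q|$, i.e., it gives $\sup_{\bS^{n-1}}|\psi|<\infty$ and hence local boundedness, but it gives no control whatsoever on $|\nabla\psi|$. A $1$-homogeneous extension of a continuous but non-Lipschitz function on the equatorial sphere stays uniformly away from the poles yet has unbounded gradient. The reason this distinction matters here is that area-minimizing hypercones can have a singular set of Hausdorff dimension up to $n-7$ away from the vertex, and nothing in your argument rules out the graph losing Lipschitz regularity over the projection of that set. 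Once Lipschitz is missing, the De Giorgi--Nash--Moser invocation has nothing to act on.

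This is precisely what the paper's sketch is addressing with ``removable singularity results for the minimal surface equation (and estimates for the singular set of an area-minimizer)'': one only knows \emph{a priori} that the regular part of $\cC$ solves the minimal surface equation over an open subset of $\mathbf{x}^\perp$ whose complement has small Hausdorff dimension, and one must appeal to a removable singularity theorem (plus the dimension bound on $\mathrm{sing}\,\cC$) to extend to a global solution. Your plan can be salvaged along a slightly different route by invoking the theorem that a perimeter-minimizing subgraph of a locally bounded function has smooth boundary (the Bombieri--De Giorgi--Miranda interior gradient estimate together with Miranda's regularity for minimizing subgraphs); your linear-growth bound from the link does supply the ``locally bounded'' hypothesis, so that route closes the gap. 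But the elementary geometric argument as written does not establish Lipschitz continuity, and this---not the surjectivity of the vertical projection, which you handle correctly---is the real obstacle.
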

\begin{proof}[Idea of the proof]
Since $\cC$ is dilation invariant (this is the definition of cone) we find that $\cC \cap (\cC + \lambda \mathbf{x}) = \emptyset$ for all $\lambda > 0$. This proves that the regular part of $\cC$ is graphical in the $\mathbf{x}$-direction and thus we can find a solution to the minimal surface equation \eqref{eq:MSE}. Removable singularity results for the minimal surface equation (and estimates for the singular set of an area-minimizer) imply that this solution extends to all of $\RR^n = \mathbf{x}^\perp$. A non-flat $\cC$ cannot be graphical near the tip of the cone at $\mathbf{0}$. This finishes the proof. 
\end{proof}
\begin{remark}
We cannot help but indicate how this implies De Giorgi's splitting theorem. Suppose that $M$ is a minimal graph in $\bR^{n+1}$ and $\cC$ is a tangent cone at infinity obtained by blowing down $M$. The graphical condition implies that $M + \lambda \mathbf{e}_{n+1}$ is disjoint from $M$ for all $\lambda > 0$, we find that $\cC$ is (weakly) disjoint from $\cC+\mathbf{e}_{n+1}$. The strong maximum principle then implies that either $\cC$ and $\cC+\mathbf{e}_{n+1}$ are strictly disjoint (impossible by Proposition \ref{prop:cant-move-cone}) or else they coincide. This implies that $\cC$ is translation invariant and thus splits a $\bR$ factor. 
\end{remark}
Proposition \ref{prop:cant-move-cone} may be combined with a geometric argument to show that if $\Sigma$ is a minimizer locally modeled on $\cC$ and $\tilde \Sigma$ is a new minimizer lying ``to one side'' then $\tilde \Sigma$ either improves near $\cC$ (is less singular in a certain sense) or else it is closely modeled on some singularity of $\cC$. This does not directly prove that $\tilde\Sigma$ is less singular than $\Sigma$, but what it does imply is that if we form a ``foliation'' $\Sigma_s$ that moves in a monotone way then the singular set $\mathcal{S}$ of the entire foliation has the (Hausdorff) dimension bound $\leq n-7$ (this is what one would obtain for a single leaf!). When combined with an estimate on the rate that (disjoint) minimal hypersurfaces ``pull apart'' at a singularity (going back to Simons \cite{Simons:minimalVarieties} classification of non-flat stable cones) this proves that some leaf of the foliation must be non-singular, proving Theorem \ref{thm:generic-reg} (in $9$ and $10$ dimensions). 

To extend this proof to $11$ dimensions, one must analyze one specific case, namely that of quadratic cylindrical cones, where the argument is borderline. Using Jacobi field analysis and nonconcentration estimates (pioneered by Simon \cite{Simon:cylindrical-sing,Simon:uniqueness.some,Simon:liouville}, cf.\ \cite{Szkelyhidi:cylindrical,Szkelyhidi:unique,EdelenSzekelyhidi:cylindrical}) we prove that either the dimension estimate improves or else the separation estimate improves near such a cone. Results of this form had previously been proven by Figalli, Ros-Oton and Serra for the obstacle problem \cite{FigalliSerra,FROS:ihes}. 

Extending Theorem \ref{thm:generic-reg} to higher dimensions currently presents a major challenge. It seems possible that one must choose the ``foliation'' $\{\Sigma_s\}_s$ more carefully, although it is yet unclear how such a choice can be transmitted to the small scale analysis of singularities. Some other problems related to Theorem \ref{thm:generic-reg} are as follows:
\begin{enumerate}
\item Classify or otherwise improve understanding of minimizing hypercones $\cC^7\subset \bR^8$. Must they be ``strictly minimizing'' and/or ``strictly stable'' in the sense of \cite{HardtSimon}? 
\item Simons proved the non-existence of non-flat stable cones in low enough dimensions by proving that quadratic cones have the smallest first eigenvalue of the stability operator among minimal hypersurfaces in the sphere \cite{Simons:minimalVarieties}. Is it possible to estimate the gap to the non-quadratic cones (assuming they exist)?
\end{enumerate}
Finally, we note that the techniques from \cite{CMS:generic.9.10,CMSW} are likely to be applied in other geometric settings. Indeed, many of the ideas were developed in our work with K.\ Choi on generic mean curvature flow \cite{CCMS:gen.1,CCS:generic2,CCMS:gen.low.ent.1,CMS:genericlowent2,CCMS:revisited}. 

\subsection*{Acknowledgments.}

Thanks to my mentors and collaborators, in particular Simon Brendle, Kyeongsu Choi, Michael Eichmair, Yi Lai, Chao Li, Christos Mantoulidis, Davi Maximo, Paul Minter, Anubhav Mukherjee, Felix Schulze, Douglas Stryker, Zhihan Wang, and Kai Xu. Additional thanks are due Chao Li and Christos Mantoulidis for their feedback on an earlier draft. Financial support was provided by an NSF grant (DMS-2304432) and a Terman Fellowship during the preparation of this article.

\bibliographystyle{amsplain}
\bibliography{bib}

\providecommand{\bysame}{\leavevmode\hbox to3em{\hrulefill}\thinspace}
\providecommand{\MR}{\relax\ifhmode\unskip\space\fi MR }
\providecommand{\MRhref}[2]{%
  \href{http://www.ams.org/mathscinet-getitem?mr=#1}{#2}
}
\providecommand{\href}[2]{#2}
\begin{thebibliography}{100}

\bibitem{Agol:MO}
Ian Agol, \emph{For a 3-manifold ${Y}$, when does ${Y}\times {S}^{1}$ admits a
  {R}iemannian metric with positive scalar curvature?}, MathOverflow,
  URL:https://mathoverflow.net/q/215872 (version: 2017-06-08).

\bibitem{Almgren:bernstein}
F.~J. Almgren, Jr., \emph{Some interior regularity theorems for minimal
  surfaces and an extension of {B}ernstein's theorem}, Ann. of Math. (2)
  \textbf{84} (1966), 277--292. \MR{0200816}

\bibitem{ABG}
Hannah Alpert, Alexey Balitskiy, and Larry Guth, \emph{Macroscopic scalar
  curvature and codimension 2 width}, J. Topol. Anal. \textbf{16} (2024),
  no.~6, 979--987. \MR{4790653}

\bibitem{AntonelliXu:BG}
Gioacchino Antonelli and Kai Xu, \emph{New spectral {B}ishop-{G}romov and
  {B}onnet-{M}yers theorems and applications to isoperimetry}, 2025.

\bibitem{Balitskiy}
Alexey Balitskiy, \emph{Bounds on {U}rysohn {W}idth}, ProQuest LLC, Ann Arbor,
  MI, 2021, Thesis (Ph.D.)--Massachusetts Institute of Technology. \MR{4464413}

\bibitem{BLM}
Richard~H. Bamler, Chao Li, and Christos Mantoulidis, \emph{Decomposing
  4-manifolds with positive scalar curvature}, Adv. Math. \textbf{430} (2023),
  Paper No. 109231, 17. \MR{4621960}

\bibitem{Bellettini}
Costante Bellettini, \emph{Extensions of {S}choen-{S}imon-{Y}au and
  {S}choen-{S}imon theorems via iteration \`a{} la {D}e {G}iorgi}, Invent.
  Math. \textbf{240} (2025), no.~1, 1--34. \MR{4871955}

\bibitem{Bernstein}
Serge Bernstein, \emph{{\"U}ber ein geometrisches {T}heorem und seine
  {A}nwendung auf die partiellen {D}ifferentialgleichungen vom elliptischen
  {T}ypus}, Math. Z. \textbf{26} (1927), no.~1, 551--558. \MR{1544873}

\bibitem{BBM:open-psc}
Laurent Bessi\`eres, G\'{e}rard Besson, and Sylvain Maillot, \emph{Ricci flow
  on open 3-manifolds and positive scalar curvature}, Geom. Topol. \textbf{15}
  (2011), no.~2, 927--975. \MR{2821567}

\bibitem{Bolotov}
Dmitry Bolotov, \emph{About the macroscopic dimension of certain
  {PSC}-manifolds}, Algebr. Geom. Topol. \textbf{9} (2009), no.~1, 21--27.
  \MR{2471130}

\bibitem{BolotovDranishnikov}
Dmitry Bolotov and Alexander Dranishnikov, \emph{On {G}romov's scalar curvature
  conjecture}, Proc. Amer. Math. Soc. \textbf{138} (2010), no.~4, 1517--1524.
  \MR{2578547}

\bibitem{BombieriDeGiorgiGiusti}
E.~Bombieri, E.~De~Giorgi, and E.~Giusti, \emph{Minimal cones and the
  {B}ernstein problem}, Invent. Math. \textbf{7} (1969), 243--268. \MR{0250205}

\bibitem{Bray:thesis}
Hubert~Lewis Bray, \emph{The {P}enrose inequality in general relativity and
  volume comparison theorems involving scalar curvature}, ProQuest LLC, Ann
  Arbor, MI, 1997, Thesis (Ph.D.)--Stanford University.

\bibitem{BrendleHung:HM}
S.~Brendle and P.K. Hung, \emph{Systolic inequalities and the
  {H}orowitz-{M}yers conjecture},  (2024).

\bibitem{Brendle:iso}
Simon Brendle, \emph{The isoperimetric inequality for a minimal submanifold in
  {E}uclidean space}, J. Amer. Math. Soc. \textbf{34} (2021), no.~2, 595--603.
  \MR{4280868}

\bibitem{BHJ}
Simon Brendle, Sven Hirsch, and Florian Johne, \emph{A generalization of
  {G}eroch's conjecture}, Comm. Pure Appl. Math. \textbf{77} (2024), no.~1,
  441--456. \MR{4666629}

\bibitem{Cai-Galloway:2000}
Mingliang Cai and Gregory Galloway, \emph{Rigidity of area minimizing tori in
  $3$-manifolds of nonnegative scalar curvature}, Comm. Anal. Geom. \textbf{8}
  (2000), no.~3, 565--573. \MR{1775139}

\bibitem{CSZ}
Huai-Dong Cao, Ying Shen, and Shunhui Zhu, \emph{The structure of stable
  minimal hypersurfaces in {${\bf R}^{n+1}$}}, Math. Res. Lett. \textbf{4}
  (1997), no.~5, 637--644. \MR{1484695}

\bibitem{CCE}
Alessandro Carlotto, Otis Chodosh, and Michael Eichmair, \emph{Effective
  versions of the positive mass theorem}, Invent. Math. \textbf{206} (2016),
  no.~3, 975--1016. \MR{3573977}

\bibitem{CMMR:splitting}
Giovanni Catino, Luciano Mari, Paolo Mastrolia, and Alberto Roncoroni,
  \emph{Criticality, splitting theorems under spectral {R}icci bounds and the
  topology of stable minimal hypersurfaces},  (2024).

\bibitem{CMR}
Giovanni Catino, Paolo Mastrolia, and Alberto Roncoroni, \emph{Two rigidity
  results for stable minimal hypersurfaces}, Geom. Funct. Anal. \textbf{34}
  (2024), no.~1, 1--18. \MR{4706440}

\bibitem{Cecchini:longneck}
Simone Cecchini, \emph{A long neck principle for {R}iemannian spin manifolds
  with positive scalar curvature}, Geom. Funct. Anal. \textbf{30} (2020),
  no.~5, 1183--1223. \MR{4181824}

\bibitem{CRZ}
Simone Cecchini, Daniel R\"ade, and Rudolf Zeidler, \emph{Nonnegative scalar
  curvature on manifolds with at least two ends}, J. Topol. \textbf{16} (2023),
  no.~3, 855--876. \MR{4611408}

\bibitem{CecchiniSchick}
Simone Cecchini and Thomas Schick, \emph{Enlargeable metrics on nonspin
  manifolds}, Proc. Amer. Math. Soc. \textbf{149} (2021), no.~5, 2199--2211.
  \MR{4232210}

\bibitem{CZ}
Simone Cecchini and Rudolf Zeidler, \emph{Scalar and mean curvature comparison
  via the {D}irac operator}, Geom. Topol. \textbf{28} (2024), no.~3,
  1167--1212. \MR{4746412}

\bibitem{CFRFS}
Hardy Chan, Xavier Fern{\'a}ndez-Real, Alessio Figalli, and Joaquim Serra,
  \emph{Global stable solutions to the free boundary {A}llen--{C}ahn and
  {B}ernoulli problems in 3{D} are one-dimensional},  (2025).

\bibitem{CWY}
Stanley Chang, Shmuel Weinberger, and Guoliang Yu, \emph{Taming 3-manifolds
  using scalar curvature}, Geom. Dedicata \textbf{148} (2010), 3--14.
  \MR{2721617}

\bibitem{ChenHongLi}
Jingche Chen, Han Hong, and Haizhong Li, \emph{Do {C}armo's problem for {CMC}
  hypersurfaces in ${R}^6$},  (2025).

\bibitem{ChenChuZhu}
Shuli Chen, Jianchun Chu, and Jintian Zhu, \emph{Positive scalar curvature
  metrics and aspherical summands},  (2023).

\bibitem{ChenChuZhu2}
\bysame, \emph{Interior control for surfaces with positive scalar curvature and
  its application}, to appear in Comm.\ Anal.\ Geom. (2025).

\bibitem{Cheng:cmc}
Xu~Cheng, \emph{On constant mean curvature hypersurfaces with finite index},
  Arch. Math. (Basel) \textbf{86} (2006), no.~4, 365--374. \MR{2223272}

\bibitem{Chodosh:minscal}
Otis Chodosh, \emph{Stable minimal surfaces and positive scalar curvature},
  (2021).

\bibitem{Chodosh:UCL}
Otis Chodosh, \emph{The {B}ernstein problem, generalizations, and
  applications},  (2023).

\bibitem{CCMS:gen.1}
Otis Chodosh, Kyeongsu Choi, Christos Mantoulidis, and Felix Schulze,
  \emph{Mean curvature flow with generic initial data}, Invent. Math.
  \textbf{237} (2024), no.~1, 121--220. \MR{4756990}

\bibitem{CCMS:gen.low.ent.1}
\bysame, \emph{Mean curvature flow with generic low-entropy initial data}, Duke
  Math. J. \textbf{173} (2024), no.~7, 1269--1290. \MR{4757533}

\bibitem{CCMS:revisited}
Otis Chodosh, Kyeongsu Choi, Christos Mantoulidis, and Felix Schulze,
  \emph{Revisiting generic mean curvature flow in ${R}^3$}, to appear in J.\
  Reine Angew.\ Math. (2024).

\bibitem{CCS:generic2}
Otis Chodosh, Kyeongsu Choi, and Felix Schulze, \emph{Mean curvature flow with
  generic initial data {II}},  (2023).

\bibitem{CEV:mincyl}
Otis Chodosh, Michael Eichmair, and Vlad Moraru, \emph{A splitting theorem for
  scalar curvature}, Comm. Pure Appl. Math. \textbf{72} (2019), no.~6,
  1231--1242.

\bibitem{CLX}
Otis Chodosh, Yi~Lai, and Kai Xu, \emph{3-manifolds with positive scalar
  curvature and bounded geometry},  (2025).

\bibitem{ChodoshLi:anisoR4}
Otis Chodosh and Chao Li, \emph{Stable anisotropic minimal hypersurfaces in
  {${\bf R}^4$}}, Forum Math. Pi \textbf{11} (2023), Paper No. e3, 22.
  \MR{4546104}

\bibitem{ChodoshLi:survey}
\bysame, \emph{Recent results concerning topological obstructions to positive
  scalar curvature}, Perspectives in scalar curvature. {V}ol. 2, World Sci.
  Publ., Hackensack, NJ, [2023] \copyright 2023, pp.~215--230. \MR{4577915}

\bibitem{ChodoshLi2020generalized}
\bysame, \emph{Generalized soap bubbles and the topology of manifolds with
  positive scalar curvature}, Ann. of Math. (2) \textbf{199} (2024), no.~2,
  707--740. \MR{4713021}

\bibitem{ChodoshLi:stableR4}
\bysame, \emph{Stable minimal hypersurfaces in {${\bf R}^4$}}, Acta Math.
  \textbf{233} (2024), no.~1, 1--31. \MR{4816633}

\bibitem{CLL:suff.conn.psc}
Otis Chodosh, Chao Li, and Yevgeny Liokumovich, \emph{Classifying sufficiently
  connected {PSC} manifolds in 4 and 5 dimensions}, Geom. Topol. \textbf{27}
  (2023), no.~4, 1635--1655. \MR{4602422}

\bibitem{CLMS}
Otis Chodosh, Chao Li, Paul Minter, and Douglas Stryker, \emph{Stable minimal
  hypersurfaces in $\mathbf{R}^5$},  (2024).

\bibitem{CLS:complete-stable}
Otis Chodosh, Chao Li, and Douglas Stryker, \emph{Complete stable minimal
  hypersurfaces in positively curved 4-manifolds}, to appear in J.\ Eur.\
  Math.\ Soc. (2022).

\bibitem{ChodoshLiStryker:VolumeGrowth}
Otis Chodosh, Chao Li, and Douglas Stryker, \emph{Volume growth of 3-manifolds
  with scalar curvature lower bounds}, Proc. Amer. Math. Soc. \textbf{151}
  (2023), no.~10, 4501--4511. \MR{4643334}

\bibitem{CMS:generic.9.10}
Otis Chodosh, Christos Mantoulidis, and Felix Schulze, \emph{Generic regularity
  for minimizing hypersurfaces in dimensions 9 and 10},  (2023).

\bibitem{ChodoshMantoulidisSchulze:Improved910}
Otis Chodosh, Christos Mantoulidis, and Felix Schulze, \emph{Improved generic
  regularity of codimension-1 minimizing integral currents}, Ars Inven. Anal.
  (2024), Paper No. 3, 16. \MR{4753940}

\bibitem{CMS:genericlowent2}
\bysame, \emph{Mean curvature flow with generic low-entropy initial data {II}},
  Duke Math. J. \textbf{174} (2025), no.~13, 2765--2819. \MR{4964789}

\bibitem{CMSW}
Otis Chodosh, Christos Mantoulidis, Felix Schulze, and Zhihan Wang,
  \emph{Generic regularity for minimizing hypersurfaces in dimension 11},
  (2025).

\bibitem{ChMa14}
Otis Chodosh and Davi Maximo, \emph{On the topology and index of minimal
  surfaces}, J. Differential Geom. \textbf{104} (2016), no.~3, 399--418.
  \MR{3568626}

\bibitem{ChMa2018}
\bysame, \emph{On the topology and index of minimal surfaces {II}}, J.
  Differential Geom. \textbf{123} (2023), no.~3, 431--459. \MR{4584858}

\bibitem{CMM}
Otis Chodosh, Davi Maximo, and Anubhav Mukherjee, \emph{Complete {R}iemannian
  4-manifolds with uniformly positive scalar curvature},  (2024).

\bibitem{CLZ:biRicci}
Jianchun Chu, Man-Chun Lee, and Jintian Zhu, \emph{Homological {$n$}-systole in
  {$(n + 1)$}-manifolds and bi-{R}icci curvature}, Adv. Math. \textbf{467}
  (2025), Paper No. 110187, 26. \MR{4873655}

\bibitem{DeGiorgi:bernstein}
Ennio De~Giorgi, \emph{Una estensione del teorema di {B}ernstein}, Ann. Scuola
  Norm. Sup. Pisa (3) \textbf{19} (1965), 79--85. \MR{0178385}

\bibitem{Deng:CMC}
Qintao Deng, \emph{Complete hypersurfaces with constant mean curvature and
  finite index in hyperbolic spaces}, Acta Math. Sci. Ser. B (Engl. Ed.)
  \textbf{31} (2011), no.~1, 353--360. \MR{2807492}

\bibitem{doCarmoPeng}
Manfredo do~Carmo and Chiakuei Peng, \emph{Stable complete minimal surfaces in
  {${\bf R}^{3}$} are planes}, Bull. Amer. Math. Soc. (N.S.) \textbf{1} (1979),
  no.~6, 903--906. \MR{546314 (80j:53012)}

\bibitem{EdelenSzekelyhidi:cylindrical}
Nick Edelen and G\'abor Sz\'ekelyhidi, \emph{A {L}iouville-type theorem for
  cylindrical cones}, Comm. Pure Appl. Math. \textbf{77} (2024), no.~8,
  3557--3580. \MR{4764748}

\bibitem{FROS:ihes}
Alessio Figalli, Xavier Ros-Oton, and Joaquim Serra, \emph{Generic regularity
  of free boundaries for the obstacle problem}, Publ. Math. Inst. Hautes
  \'{E}tudes Sci. \textbf{132} (2020), 181--292. \MR{4179834}

\bibitem{FigalliSerra}
Alessio Figalli and Joaquim Serra, \emph{On the fine structure of the free
  boundary for the classical obstacle problem}, Invent. Math. \textbf{215}
  (2019), no.~1, 311--366. \MR{3904453}

\bibitem{Fischer-Colbrie-Schoen}
Doris Fischer-Colbrie and Richard Schoen, \emph{The structure of complete
  stable minimal surfaces in {$3$}-manifolds of nonnegative scalar curvature},
  Comm. Pure Appl. Math. \textbf{33} (1980), no.~2, 199--211. \MR{562550
  (81i:53044)}

\bibitem{Fleming}
Wendell~H. Fleming, \emph{On the oriented {P}lateau problem}, Rend. Circ. Mat.
  Palermo (2) \textbf{11} (1962), 69--90. \MR{0157263}

\bibitem{FSS}
Enric Florit-Simon and Joaquim Serra, \emph{On stable solutions to the
  {A}llen-{C}ahn equation with bounded energy density in ${R}^4$},  (2025).

\bibitem{Freedman}
Michael~Hartley Freedman, \emph{The topology of four-dimensional manifolds}, J.
  Differential Geometry \textbf{17} (1982), no.~3, 357--453. \MR{679066}

\bibitem{GL:simply-connected-PSC}
Mikhael Gromov and H.~Blaine Lawson, Jr., \emph{The classification of simply
  connected manifolds of positive scalar curvature}, Ann. of Math. (2)
  \textbf{111} (1980), no.~3, 423--434. \MR{577131}

\bibitem{GromovLawson}
\bysame, \emph{Spin and scalar curvature in the presence of a fundamental
  group. {I}}, Ann. of Math. (2) \textbf{111} (1980), no.~2, 209--230.
  \MR{569070}

\bibitem{GL:complete}
\bysame, \emph{Positive scalar curvature and the {D}irac operator on complete
  {R}iemannian manifolds}, Inst. Hautes \'{E}tudes Sci. Publ. Math. (1983),
  no.~58, 83--196 (1984). \MR{720933}

\bibitem{gromov1996positive}
Misha Gromov, \emph{Positive curvature, macroscopic dimension, spectral gaps
  and higher signatures}, Functional analysis on the eve of the 21st century,
  {V}ol. {II} ({N}ew {B}runswick, {NJ}, 1993), Progr. Math., vol. 132,
  Birkh\"{a}user Boston, Boston, MA, 1996, pp.~1--213. \MR{1389019}

\bibitem{gromov2017questions}
\bysame, \emph{101 questions, problems and conjectures around scalar
  curvature}, 2017.

\bibitem{Gromov:metric-inequalities}
\bysame, \emph{Metric inequalities with scalar curvature}, Geom. Funct. Anal.
  \textbf{28} (2018), no.~3, 645--726. \MR{3816521}

\bibitem{Gromov2020metrics}
Misha Gromov, \emph{No metrics with positive scalar curvatures on aspherical
  5-manifolds},  (2020).

\bibitem{gromov2019lectures}
Misha Gromov, \emph{Four lectures on scalar curvature}, Perspectives in scalar
  curvature. {V}ol. 1, World Sci. Publ., Hackensack, NJ, [2023] \copyright
  2023, pp.~1--514. \MR{4577903}

\bibitem{GromovHanke}
Misha Gromov and Bernhard Hanke, \emph{Torsion obstructions to positive scalar
  curvature}, SIGMA Symmetry Integrability Geom. Methods Appl. \textbf{20}
  (2024), Paper No. 069, 22. \MR{4843358}

\bibitem{GulliverLawson}
Robert Gulliver and H.~Blaine Lawson, Jr., \emph{The structure of stable
  minimal hypersurfaces near a singularity}, Geometric measure theory and the
  calculus of variations ({A}rcata, {C}alif., 1984), Proc. Sympos. Pure Math.,
  vol.~44, Amer. Math. Soc., Providence, RI, 1986, pp.~213--237. \MR{840275}

\bibitem{HardtSimon}
Robert Hardt and Leon Simon, \emph{Area minimizing hypersurfaces with isolated
  singularities}, J. Reine Angew. Math. \textbf{362} (1985), 102--129.
  \MR{809969}

\bibitem{Hichin:harmonic-spinor}
Nigel Hitchin, \emph{Harmonic spinors}, Advances in Math. \textbf{14} (1974),
  1--55. \MR{358873}

\bibitem{Hang:CMC-H4}
Han Hong, \emph{C{MC} hypersurface with finite index in hyperbolic space
  {$\Bbb{H}^4$}}, Adv. Math. \textbf{478} (2025), Paper No. 110408, 20.
  \MR{4924061}

\bibitem{HongYan}
Han Hong and Zetian Yan, \emph{Rigidity of {CMC} hypersurfaces in 5-and
  6-manifolds},  (2024).

\bibitem{HuXuZhang:diameter}
Qixuan Hu, Guoyi Xu, and Shuai Zhang, \emph{The sharp diameter bound of stable
  minimal surfaces}, J. Geom. Anal. \textbf{35} (2025), no.~7, Paper No. 197,
  14. \MR{4908893}

\bibitem{HI}
Gerhard Huisken and Tom Ilmanen, \emph{The inverse mean curvature flow and the
  {R}iemannian {P}enrose inequality}, J. Differential Geom. \textbf{59} (2001),
  no.~3, 353--437. \MR{1916951}

\bibitem{LUY:liouville}
Martin Lesourd, Ryan Unger, and Shing-Tung Yau, \emph{Positive scalar curvature
  on noncompact manifolds and the {L}iouville theorem}, Comm. Anal. Geom.
  \textbf{32} (2024), no.~5, 1311--1337. \MR{4836036}

\bibitem{LiMantoulidis:spect}
Chao Li and Christos Mantoulidis, \emph{Metrics with {$\lambda _1(-\Delta + k
  R) \ge 0$} and flexibility in the {R}iemannian {P}enrose inequality}, Comm.
  Math. Phys. \textbf{401} (2023), no.~2, 1831--1877. \MR{4610287}

\bibitem{LiZhang:cover}
Chao Li and Boyu Zhang, \emph{Covering instability for the existence of
  positive scalar curvature metrics},  (2025).

\bibitem{LiZhang:topMin}
\bysame, \emph{On the topology of stable minimal hypersurfaces in a
  homeomorphic ${S}^4$},  (2025).

\bibitem{Li:mink}
Xuanyu Li, \emph{Minkowski content estimates for generic area minimizing
  hypersurfaces}, Calc. Var. Partial Differential Equations \textbf{63} (2024),
  no.~7, Paper No. 176, 15. \MR{4772593}

\bibitem{Lichnerowicz}
Andr\'{e} Lichnerowicz, \emph{Spineurs harmoniques}, C. R. Acad. Sci. Paris
  \textbf{257} (1963), 7--9. \MR{156292}

\bibitem{LiokumovichMaximo2020waist}
Yevgeny Liokumovich and Davi Maximo, \emph{Waist inequality for 3-manifolds
  with positive scalar curvature}, Perspectives in scalar curvature. {V}ol. 2,
  World Sci. Publ., Hackensack, NJ, [2023] \copyright 2023, pp.~799--831.
  \MR{4577931}

\bibitem{Liu:milnor}
Gang Liu, \emph{3-manifolds with nonnegative {R}icci curvature}, Invent. Math.
  \textbf{193} (2013), no.~2, 367--375. \MR{3090181}

\bibitem{LopezRos:index-one}
Francisco~J. L{{\'o}}pez and Antonio Ros, \emph{Complete minimal surfaces with
  index one and stable constant mean curvature surfaces}, Comment. Math. Helv.
  \textbf{64} (1989), no.~1, 34--43. \MR{982560 (90b:53006)}

\bibitem{MantSch}
Christos Mantoulidis and Richard Schoen, \emph{On the {B}artnik mass of
  apparent horizons}, Classical Quantum Gravity \textbf{32} (2015), no.~20,
  205002, 16. \MR{3406373}

\bibitem{Mazet}
Laurent Mazet, \emph{Stable minimal hypersurfaces in $\mathbb{R}^6$},  (2024).

\bibitem{MWY:intermediateasph}
Liam Mazurowski, Tongrui Wang, and Xuan Yao, \emph{On the topology of manifolds
  with positive intermediate curvature},  (2025).

\bibitem{MSsob}
J.~H. Michael and L.~M. Simon, \emph{Sobolev and mean-value inequalities on
  generalized submanifolds of {$R^{n}$}}, Comm. Pure Appl. Math. \textbf{26}
  (1973), 361--379. \MR{344978}

\bibitem{MunteanuWang}
Ovidiu Munteanu and Jiaping Wang, \emph{Comparison theorems for 3{D} manifolds
  with scalar curvature bound}, Int. Math. Res. Not. IMRN (2023), no.~3,
  2215--2242. \MR{4565611}

\bibitem{OS:d}
Peter Ozsv\'ath and Zolt\'an Szab\'o, \emph{Absolutely graded {F}loer
  homologies and intersection forms for four-manifolds with boundary}, Adv.
  Math. \textbf{173} (2003), no.~2, 179--261. \MR{1957829}

\bibitem{perelman:entropy}
Grisha Perelman, \emph{The entropy formula for the ricci flow and its geometric
  applications},  (2002).

\bibitem{perelman:surgery}
\bysame, \emph{Ricci flow with surgery on three-manifolds},  (2003).

\bibitem{Pogorelov}
Aleksei~V. Pogorelov, \emph{On the stability of minimal surfaces}, Dokl. Akad.
  Nauk SSSR \textbf{260} (1981), no.~2, 293--295. \MR{630142 (83b:49043)}

\bibitem{Rade}
Daniel R\"ade, \emph{Scalar and mean curvature comparison via {$\mu
  $}-bubbles}, Calc. Var. Partial Differential Equations \textbf{62} (2023),
  no.~7, Paper No. 187, 39. \MR{4612761}

\bibitem{RT}
John~G. Ratcliffe and Steven~T. Tschantz, \emph{Some examples of aspherical
  4-manifolds that are homology 4-spheres}, Topology \textbf{44} (2005), no.~2,
  341--350. \MR{2114711}

\bibitem{Ros:onesided}
Antonio Ros, \emph{One-sided complete stable minimal surfaces}, J. Differential
  Geom. \textbf{74} (2006), no.~1, 69--92. \MR{2260928}

\bibitem{Rosenberg1983C*algebra}
Jonathan Rosenberg, \emph{{$C^{\ast} $}-algebras, positive scalar curvature,
  and the {N}ovikov conjecture}, Inst. Hautes \'{E}tudes Sci. Publ. Math.
  (1983), no.~58, 197--212 (1984). \MR{720934}

\bibitem{Rosenberg:PSC.progress}
\bysame, \emph{Manifolds of positive scalar curvature: a progress report},
  Surveys in differential geometry. {V}ol. {XI}, Surv. Differ. Geom., vol.~11,
  Int. Press, Somerville, MA, 2007, pp.~259--294. \MR{2408269}

\bibitem{Sapir:higman}
Mark Sapir, \emph{A {H}igman embedding preserving asphericity}, J. Amer. Math.
  Soc. \textbf{27} (2014), no.~1, 1--42. \MR{3110794}

\bibitem{Schick1998counterexample}
Thomas Schick, \emph{A counterexample to the (unstable)
  {G}romov-{L}awson-{R}osenberg conjecture}, Topology \textbf{37} (1998),
  no.~6, 1165--1168. \MR{1632971}

\bibitem{SSY}
R.~Schoen, L.~Simon, and S.~T. Yau, \emph{Curvature estimates for minimal
  hypersurfaces}, Acta Math. \textbf{134} (1975), no.~3-4, 275--288.
  \MR{423263}

\bibitem{SY:manuscripta}
R.~Schoen and S.~T. Yau, \emph{On the structure of manifolds with positive
  scalar curvature}, Manuscripta Math. \textbf{28} (1979), no.~1-3, 159--183.
  \MR{535700}

\bibitem{SY:conf-flat}
R.~Schoen and S.-T. Yau, \emph{Conformally flat manifolds, {K}leinian groups
  and scalar curvature}, Invent. Math. \textbf{92} (1988), no.~1, 47--71.
  \MR{931204}

\bibitem{Schoen:yamabe}
Richard Schoen, \emph{Conformal deformation of a {R}iemannian metric to
  constant scalar curvature}, J. Differential Geom. \textbf{20} (1984), no.~2,
  479--495. \MR{788292}

\bibitem{SchoenSimon}
Richard Schoen and Leon Simon, \emph{Regularity of stable minimal
  hypersurfaces}, Comm. Pure Appl. Math. \textbf{34} (1981), no.~6, 741--797.
  \MR{634285}

\bibitem{SY:condensation}
Richard Schoen and S.~T. Yau, \emph{The existence of a black hole due to
  condensation of matter}, Comm. Math. Phys. \textbf{90} (1983), no.~4,
  575--579. \MR{719436}

\bibitem{SY:3d-torus}
Richard Schoen and Shing-Tung Yau, \emph{Existence of incompressible minimal
  surfaces and the topology of three-dimensional manifolds with nonnegative
  scalar curvature}, Ann. of Math. (2) \textbf{110} (1979), no.~1, 127--142.
  \MR{541332}

\bibitem{SY:PMT1}
\bysame, \emph{On the proof of the positive mass conjecture in general
  relativity}, Comm. Math. Phys. \textbf{65} (1979), no.~1, 45--76. \MR{526976}

\bibitem{ShenYe}
Ying Shen and Rugang Ye, \emph{On stable minimal surfaces in manifolds of
  positive bi-{R}icci curvatures}, Duke Math. J. \textbf{85} (1996), no.~1,
  109--116. \MR{1412440}

\bibitem{SY:general}
\bysame, \emph{{On the geometry and topology of manifolds of positive
  bi-{R}icci curvature}},  (1997).

\bibitem{Simon:GMT}
Leon Simon, \emph{Lectures on geometric measure theory}, Proceedings of the
  Centre for Mathematical Analysis, Australian National University, vol.~3,
  Australian National University, Centre for Mathematical Analysis, Canberra,
  1983. \MR{756417}

\bibitem{Simon:cylindrical-sing}
\bysame, \emph{Cylindrical tangent cones and the singular set of minimal
  submanifolds}, J. Differential Geom. \textbf{38} (1993), no.~3, 585--652.
  \MR{1243788}

\bibitem{Simon:uniqueness.some}
\bysame, \emph{Uniqueness of some cylindrical tangent cones}, Communications in
  Analysis and Geometry \textbf{2} (1994), no.~1, 1--33.

\bibitem{Simon:liouville}
\bysame, \emph{A {L}iouville-type theorem for stable minimal hypersurfaces},
  Ars Inven. Anal. (2021), Paper No. 5, 35.

\bibitem{Simons:minimalVarieties}
James Simons, \emph{Minimal varieties in riemannian manifolds}, Ann. of Math.
  (2) \textbf{88} (1968), 62--105. \MR{0233295}

\bibitem{Smale:generic}
Nathan Smale, \emph{Generic regularity of homologically area minimizing
  hypersurfaces in eight-dimensional manifolds}, Comm. Anal. Geom. \textbf{1}
  (1993), no.~2, 217--228. \MR{1243523}

\bibitem{Stern}
Daniel~L. Stern, \emph{Scalar curvature and harmonic maps to {$S^1$}}, J.
  Differential Geom. \textbf{122} (2022), no.~2, 259--269. \MR{4516941}

\bibitem{Stolz:simply.conn}
Stephan Stolz, \emph{Simply connected manifolds of positive scalar curvature},
  Ann. of Math. (2) \textbf{136} (1992), no.~3, 511--540. \MR{1189863}

\bibitem{Stolz:survey}
\bysame, \emph{Positive scalar curvature---constructions and obstructions},
  Perspectives in scalar curvature. {V}ol. 2, World Sci. Publ., Hackensack, NJ,
  [2023] \copyright 2023, pp.~5--49. \MR{4577910}

\bibitem{Sweeney}
Paul Sweeney, Jr., \emph{Positive curvature conditions on contractible
  manifolds},  (2025).

\bibitem{Szkelyhidi:unique}
G{\'{a}}bor Sz{\'{e}}kelyhidi, \emph{Uniqueness of certain cylindrical tangent
  cones},  (2020).

\bibitem{Szkelyhidi:cylindrical}
\bysame, \emph{Minimal hypersurfaces with cylindrical tangent cones},  (2021).

\bibitem{Taubes}
Clifford~Henry Taubes, \emph{Gauge theory on asymptotically periodic
  {$4$}-manifolds}, J. Differential Geom. \textbf{25} (1987), no.~3, 363--430.
  \MR{882829}

\bibitem{wang2019thesis}
Jian Wang, \emph{Contractible 3-manifolds and positive scalar curvature}, Ph.D.
  thesis, Universit\'e Grenoble Alpes, 2019.

\bibitem{wang:strictPSC}
\bysame, \emph{Topology of 3-manifolds with uniformly positive scalar
  curvature}, To appear in Trans.\ AMS (2022).

\bibitem{Wang:contractible1}
\bysame, \emph{Contractible 3-manifolds and positive scalar curvature ({I})},
  J. Differential Geom. \textbf{127} (2024), no.~3, 1267--1304. \MR{4773179}

\bibitem{Wang:smoothing}
Zhihan Wang, \emph{Mean convex smoothing of mean convex cones}, Geom. Funct.
  Anal. \textbf{34} (2024), no.~1, 263--301. \MR{4706448}

\bibitem{Witten:positivemass}
Edward Witten, \emph{A new proof of the positive energy theorem}, Comm. Math.
  Phys. \textbf{80} (1981), no.~3, 381--402. \MR{626707}

\bibitem{Xu:outer}
Kai Xu, \emph{Inverse mean curvature flow with outer obstacle},  (2024).

\bibitem{Xu:IMCF}
Kai Xu, \emph{Isoperimetry and the properness of weak inverse mean curvature
  flow}, Calc. Var. Partial Differential Equations \textbf{63} (2024), no.~8,
  Paper No. 216, 19. \MR{4798418}

\bibitem{Xu:counterex}
\bysame, \emph{Dimension constraints in some problems involving intermediate
  curvature}, Trans. Amer. Math. Soc. \textbf{378} (2025), no.~3, 2091--2112.
  \MR{4866359}

\bibitem{Xu:systole}
\bysame, \emph{A topological gap theorem for the {$\pi_2$}-systole of positive
  scalar curvature 3-manifolds}, Duke Math. J. \textbf{174} (2025), no.~8,
  1647--1664. \MR{4916112}

\bibitem{Yu1998novikov}
Guoliang Yu, \emph{The {N}ovikov conjecture for groups with finite asymptotic
  dimension}, Ann. of Math. (2) \textbf{147} (1998), no.~2, 325--355.
  \MR{1626745}

\bibitem{Zeidler:band}
Rudolf Zeidler, \emph{Band width estimates via the {D}irac operator}, J.
  Differential Geom. \textbf{122} (2022), no.~1, 155--183. \MR{4507473}

\bibitem{zhu2020width}
Jintian Zhu, \emph{Width estimate and doubly warped product}, Trans. Amer.
  Math. Soc. \textbf{374} (2021), no.~2, 1497--1511. \MR{4196400}

\end{thebibliography}

\end{document}